\newcommand{\stkout}[1]{\ifmmode\text{\sout{\ensuremath{#1}}}\else\sout{#1}\fi}
\newcommand{\R}{\mathbb{R}}
\newcommand{\Rn}{\mathbb{R}^n}
\theoremstyle{plain}
\newtheorem{thm}{Theorem}[section]
\theoremstyle{plain}
\newtheorem{lem}[thm]{Lemma}
\newtheorem{defi}[thm]{Definition}
\newtheorem{rem}{Remark}[section]
\theoremstyle{definition}
\newtheorem*{maintheorem*}{Main Theorem}
\newtheorem*{maincorollary*}{Main Corollary}
\newcommand{\norm}[1]{\ensuremath{\left\|#1\right\|}}
\newcommand{\cone}{\ensuremath{\mathcal{C}}}
\newcommand{\cD}{\ensuremath{\mathcal{D}}}
\newcommand{\sL}{\ensuremath{\mathscr{L}}}
\newcommand{\cJ}{\ensuremath{\mathcal{J}}}
\newcommand{\dist}{{\rm dist}}
\newcommand{\xbar}{\ensuremath{\bar{x}}}
\newcommand{\ybar}{\ensuremath{\bar{y}}}
\newcommand{\abar}{\ensuremath{\bar{a}}}
\newcommand{\tail}{{\rm Tail}}
\newcommand{\dx}{\ensuremath{\, dx}}
\newcommand{\dz}{\ensuremath{\, dz}}
\numberwithin{equation}{section} \allowdisplaybreaks
\title[Improved H\"{o}lder regularity]{Improved H\"{o}lder regularity of fractional $(p,q)$-Poisson
equation with regular data}
\begin{document}

\author{Anup Biswas}

\author{Aniket Sen}

\address{Indian Institute of Science Education and Research-Pune, Dr.\ Homi Bhabha Road, Pashan, Pune 411008, INDIA. Emails:
{\tt anup@iiserpune.ac.in, aniket.sen@students.iiserpune.ac.in}}

\begin{abstract}
We prove a quantitative H\"{o}lder continuity result for viscosity solutions to the equation
$$
(-\Delta_p)^{s}u(x) + {\rm PV} \int_{\Rn} |u(x)-u(x+z)|^{q-2}(u(x)-u(x+z))\frac{\xi(x,z)}{|z|^{n+ tq}}\dz=f \quad \text{in}\; B_2,
$$
where $t, s\in (0, 1), 1<p\leq q, tq\leq sp$ and $\xi\geq 0$. Specifically, we show that if $\xi$ is $\alpha$-H\"{o}lder continuous and
$f$ is $\beta$-H\"{o}lder continuous then any viscosity solution is locally $\gamma$-H\"{o}lder continuous for any  
$\gamma<\gamma_\circ $, where 
\[
\gamma_\circ=\left\{\begin{array}{lll}
\min\{1, \frac{sp+\alpha\wedge\beta}{p-1}, \frac{sp}{p-2}\} & \text{for}\; p>2,
\\
\min\{1, \frac{sp+\alpha\wedge\beta}{p-1}\} & \text{for}\; p\in (1, 2].
\end{array}
\right.
\]
Moreover, if 
$\min\{\frac{sp+\alpha\wedge\beta}{p-1}, \frac{sp}{p-2}\}>1$ when $p>2$, or  $\frac{sp+\alpha\wedge\beta}{p-1}>1$ when $p\in (1, 2]$, the solution is 
locally Lipschitz. This extends the result of \cite{DFP19} to the case of H\"{o}lder continuous modulating coefficients.
 Additionally, due to the equivalence between viscosity and weak solutions, our result provides a local Lipschitz estimate for weak solutions of $(-\Delta_p)^{s}u(x)=0$ provided either $p\in (1, 2]$ or $sp>p-2$ when $p>2$, thereby improving recent works
\cite{BDLMS24a,BDLMS24b,DKLN}.
\end{abstract}

\keywords{Lipschitz regularity, fractional $p$-Laplacian, H\"{o}lder regularity, nonlocal double phase problems}
\subjclass[2020]{Primary: 35B65, 35J70, 35R09}

\maketitle


\section{Introduction}

In this article, we study a class of nonlocal double phase equations which are possibly singular and degenerate, integro-differential equations and its leading operator switches between two different
fractional elliptic phases according to the zero set of the modulating coefficient $\xi$. More precisely, our operator is
given by 
\begin{equation*}
\sL u(x)=(-\Delta_p)^{s}u(x) + {\rm PV} \int_{\Rn} |u(x)-u(x+z)|^{q-2}(u(x)-u(x+z))\frac{\xi(x,z)}{|z|^{n+tq}}\dz,
\end{equation*}
where $(-\Delta_p)^{s}$ denotes the well known fractional $p$-Laplacian, given by,
$$(-\Delta_p)^{s}u(x)={\rm PV}\int_{\Rn}|u(x)-u(x+z)|^{p-2}(u(x)-u(x+z))\frac{1}{|z|^{n+ sp}}\dz.$$
Here PV stands for the principal value. We impose the following conditions on the parameters:
\begin{equation}\label{para-con}
s, t\in (0, 1),\quad 1< p\leq q<\infty,\quad \text{and}\quad t q\leq sp.
\end{equation}
Moreover, we assume that $\xi$ is symmetric in the second variable, that is, $\xi(x, z)=\xi(x, -z)$ for all $z\in\Rn$ and bounded
$$0\leq \xi\leq M.$$
We emphasize that this symmetry assumption on $\xi$ differs from the one considered in the variational setting, see \cite{BOS22,DKP14,DKP16}, and can be interpreted as a {\it non-divergence} form of the nonlocal double phase problem. The above framework is more suitable to work with viscosity
solutions, see also \cite{Lin16}. Furthermore, if we impose additional conditions on $\xi$-- such as symmetry (i.e., $\xi(x, y)=\xi(y, x)$) and translation invariance (i.e., $\xi(x+z,y+z)=\xi(x, y)$)-- then, as shown in \cite{FZ23},  bounded weak solutions of 
\eqref{E2.1} are H\"{o}lder continuous and also viscosity solution to \eqref{E2.1}. Thus, under these additional hypotheses,  our result extends to bounded weak solutions. 

The operator $\sL$ can be viewed as a nonlocal counterpart to classical double phase problems, arising from the minimization of the functional
\begin{equation}\label{AB001}
\mathcal{E}(u)=\int (|\nabla u|^p + \xi(x) |\nabla u|^q) \dx\quad 1<p\leq q.
\end{equation}
This functional emerges in homogenization studies \cite{Z86,Z95} and belongs to the class of non-uniformly elliptic problems with $(p,q)$-growth conditions. While the regularity theory for general $(p, q)$-growth problems is highly challenging, the specific structure of \eqref{AB001} 
 has been extensively investigated--see \cite{BCM18,CM15,CM15a} and the survey \cite{MR21}.

In this work, we focus on viscosity solutions $u$ to the equation
\begin{equation}\label{E2.1}
\sL u=f \quad \text{in}\; B_2,
\end{equation}
where $B_r$ denotes the ball of radius $r$ centered at the origin. The precise definition of a viscosity solution will be introduced in the next section
( see Definition~\ref{Def1.1}). To state our main result, let us introduce the \textit{tail spaces}.
By $L^{p-1}_{sp}(\Rn)$ we denote the weighted $L^p$ space  defined by
$$L^{p-1}_{sp}(\Rn)=\left\{f\in L^{p-1}_{\rm loc}(\Rn)\; :\; \int_{\Rn}\frac{|f(z)|^{p-1}}{(1+|z|)^{n+sp}}\dz <\infty\right\}.$$
Associated to this tail space we also define the {\it tail function} given by
$$
\tail_{s,p}(f; x, r) := \left(r^{sp} \int_{|z-x|\geq r}\frac{|f(z)|^{p-1}}{|z-x|^{n+sp}}\dz \right)^{\frac{1}{p-1}}\quad r>0.
$$
Analogously, we define $L^{q-1}_{tq}(\Rn)$ and $\tail_{t,q}(f; x, r)$. Let
$$A=\sup_{B_2}|u| + \tail_{s,p}(u; 0, 2) + \tail_{t,q}(u; 0, 2).$$
Our main result of this article is the following
\begin{thm}\label{Tmain-1}
Let $p\in (1, \infty)$ and $\xi:\bar{B}_2\times\Rn\to[0, \infty)$ be $\alpha$-H\"{o}lder continuous, that is,
$$|\xi(x_1, z_1)-\xi(x_2, z_2)|\leq C(|x_1-x_2|^\alpha + |z_1-z_2|^\alpha)\quad \text{for}\; (x_1, z_1), \, (x_2, z_2)\in \bar{B}_2\times\Rn.$$
Also, let $f\in C^{0,\beta}(\bar{B}_2)$
and $u\in C(B_2)\cap L^{p-1}_{sp}(\Rn)\cap L^{q-1}_{tq}(\Rn)$ be a viscosity solution to \eqref{E2.1}. 
Then $u\in C^{0, \gamma}(\bar{B}_1)$ for any $\gamma<\gamma_\circ $, where 
\[
\gamma_\circ=\left\{\begin{array}{lll}
\min\{1, \frac{sp+\alpha\wedge\beta}{p-1}, \frac{sp}{p-2}\} & \text{for}\; p>2,
\\[2mm]
\min\{1, \frac{sp+\alpha\wedge\beta}{p-1}\} & \text{for}\; p\in (1, 2].
\end{array}
\right.
\]
Moreover, if $\gamma_\circ=\frac{sp+\alpha\wedge\beta}{p-1}<\min\{1, \frac{sp}{p-2}\}$ when $p>2$, or 
$\gamma_\circ=\frac{sp+\alpha\wedge\beta}{p-1}<1$ when $p\in (1, 2]$, we have  $u\in C^{0, \gamma_\circ}(\bar{B}_1)$. Also,
if $\min\{\frac{sp+\alpha\wedge\beta}{p-1}, \frac{sp}{p-2}\}>1$ when $p>2$, or $\frac{sp+\alpha\wedge\beta}{p-1}>1$ when $p\in (1, 2]$,
we have $u\in C^{0, 1}(\bar{B}_1)$.
 In all the above cases, 
$\norm{u}_{C^{0, \gamma}(\bar{B}_1)}$ (including $\gamma=1, \gamma_\circ$) is bounded by a constant $C$, dependent on $M, A, s, t, p, q, n$, $\norm{\xi}_{C^{0,\alpha}(B_2\times\Rn)}$ and $\norm{f}_{C^{0, \beta}(B_2)}$.
\end{thm}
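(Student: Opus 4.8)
The plan is to prove, uniformly for $x_{0}\in B_{1}$, the pointwise oscillation bound $\abs{u(x)-u(x_{0})}\le C\abs{x-x_{0}}^{\gamma}$ for small $\abs{x-x_{0}}$, by an iterative improvement-of-oscillation scheme based on comparison with explicit barriers. After translating $x_{0}$ to the origin, one first normalizes. Since the two terms of $\sL$ are positively homogeneous of degrees $p-1$ and $q-1$ and $p\le q$, replacing $u$ by $u/K$ with $K\ge 1$ turns \eqref{E2.1} into an equation of the same form with right-hand side $K^{-(p-1)}f$ and modulating coefficient $K^{p-q}\xi\le\xi$; hence we may assume $A$ and $\norm{f}_{\infty}$ as small as convenient. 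It then suffices to show that the rescaled functions $u_{r}(y):=r^{-\gamma}\bigl(u(ry)-u(0)\bigr)$ stay uniformly bounded on $B_{1}$ as $r\downarrow 0$.

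The key structural point is that $u_{r}$ again solves an equation of the type \eqref{E2.1}: writing out the scaling gives
\[
(-\Delta_p)^{s}u_{r}(y)+r^{\,sp-tq+\gamma(q-p)}\,{\rm PV}\int_{\Rn}\abs{u_{r}(y)-u_{r}(y+z)}^{q-2}\bigl(u_{r}(y)-u_{r}(y+z)\bigr)\frac{\tilde\xi(y,z)}{|z|^{n+tq}}\dz=r^{\,sp-\gamma(p-1)}f(ry),
\]
with $\tilde\xi(y,z)=\xi(ry,rz)$, $0\le\tilde\xi\le M$. Here the conditions $tq\le sp$ and $p\le q$ from \eqref{para-con} are used precisely so that the exponent $sp-tq+\gamma(q-p)$ is nonnegative, i.e.\ the subordinate term never amplifies under rescaling. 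On the right, the constant $f(0)$ is matched by including in the barrier a (suitably truncated) power of exponent $sp/(p-1)$, which $(-\Delta_p)^{s}$ sends to a nonzero constant at unit scale, while the oscillating part of $f$ is $O(r^{\beta})$ by $\beta$-H\"older continuity; together with the prefactor $r^{\,sp-\gamma(p-1)}$ this forces $\gamma(p-1)\le sp+\beta$. Similarly, replacing $\tilde\xi(y,\cdot)$ by its frozen value $\xi(0,\cdot)$ costs $O(r^{\alpha})$ by $\alpha$-H\"older continuity in the first variable, forcing $\gamma(p-1)\le sp+\alpha$; the two combine into the threshold $(sp+\alpha\wedge\beta)/(p-1)$ in $\gamma_{\circ}$.

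With this in hand the scheme runs as follows. A preliminary qualitative H\"older estimate for viscosity solutions of \eqref{E2.1} (obtainable by an Ishii--Lions doubling argument, or invoked from the literature) gives $u\in C^{0,\sigma}_{\rm loc}(B_{2})$ for some small $\sigma>0$; this controls the near-field tails $\tail_{s,p}(u;x_{0},r)$ and $\tail_{t,q}(u;x_{0},r)$ as $r\downarrow 0$, the far-field tails being bounded by $CA$ throughout. One then proves, by induction over dyadic scales $r=\lambda^{k}$, the existence of a nonincreasing sequence $(M_{k})$ and a nondecreasing sequence $(m_{k})$ with $m_{k}\le u\le M_{k}$ on $B_{\lambda^{k}}(x_{0})$ and $M_{k}-m_{k}\le C\lambda^{k\gamma}$. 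Each inductive step rescales to unit size and uses the comparison principle for viscosity sub- and supersolutions of $\sL$ (its non-divergence structure is what makes this available) against power-type barriers to gain a fixed factor in the oscillation, the error terms being exactly absorbable when $\gamma<\gamma_{\circ}$. In the regime where $\gamma_{\circ}$ exceeds $sp/(p-1)$, i.e.\ when the iteration is pushed toward Lipschitz, the barriers are taken to be affine functions plus a lower-order power correction, and one shows that the oscillation of $u$ about its best affine approximation decays geometrically; summing the increments yields local $C^{0,\gamma}$, and the Lipschitz bound when the thresholds permit. For $p>2$ the linearization of $(-\Delta_p)^{s}$ about an affine function of unit slope produces, in the annular region $|z|\sim r^{\gamma}$ where the lower-order perturbation is comparable to the affine increment, an error of relative size $\sim r^{\gamma(p-2)-sp}$; requiring this to be harmless is precisely $\gamma(p-2)\le sp$, which is the only source of the extra threshold $sp/(p-2)$ and is vacuous for $p\in(1,2]$.

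The main obstacle, compared with the local double phase functional \eqref{AB001} and with \cite{DFP19}, is twofold. First, $\xi$ depends on both $x$ and $z$, so only the $x$-oscillation of the coefficient can be frozen, and every estimate for the subordinate operator must be uniform in the remaining $z$-dependence while the two nonlocal orders $sp$ and $tq$ are tracked simultaneously; the compatibility $tq\le sp$ is what keeps this under control. Second, and this is the genuine difficulty behind the word ``improved'', carrying the iteration past the first H\"older exponent $sp/(p-1)$ up to (and, for $p>2$, within the degeneracy-imposed range up to) the optimal exponent requires the fine barrier/affine-correction analysis above together with a careful treatment of the degenerate region for $p>2$; it is here that the sharp thresholds $(sp+\alpha\wedge\beta)/(p-1)$ and $sp/(p-2)$ emerge. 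The remaining, largely bookkeeping, parts of the proof (scaling invariance, comparison principle, summation of the geometric series, and passage from the pointwise bounds to the norm estimate on $\bar B_{1}$) are routine.
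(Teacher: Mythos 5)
Your proposal follows a genuinely different route from the paper (which runs an Ishii--Lions doubling argument with test functions and a bootstrap on the H\"older exponent, Theorems~\ref{T3.7} and~\ref{T4.3}), but as written it has real gaps at exactly the points where the theorem is hard. The central step --- that comparison against power-type barriers, and in the super-$\frac{sp}{p-1}$ regime against affine functions plus a lower-order power correction, yields geometric decay of the oscillation (resp.\ of the excess over the best affine approximation) up to the exponents $\frac{sp+\alpha\wedge\beta}{p-1}$ and, for $p>2$, $\frac{sp}{p-2}$ --- is only asserted on the basis of a scaling count (``error of relative size $r^{\gamma(p-2)-sp}$ in the annulus $|z|\sim r^\gamma$''). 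No barrier is actually constructed and verified, no comparison principle for $\sL$ is proved or cited in the generality needed here (degenerate/singular $J_p$, modulating coefficient $\xi(x,z)$ that is merely bounded and measurable in $z$, and the restricted test class $C^2_\eta$ of Definition~\ref{Def1.1}), and the action of the $\xi$-weighted term of order $tq$ on the proposed barrier is never examined. Pushing beyond $\breve\gamma=\min\{1,\frac{sp}{p-1}\}$ is precisely where all known arguments require substantial new input (this is the content of Steps 3--4 of the paper's proofs, with the parabola $\ell$, the delicate choices of $\theta$ and $m$, and the log-corrected Lipschitz profile); a heuristic error count does not replace that analysis, and in particular it does not explain how excess decay about affine functions is to be proved for a degenerate nonlocal operator with no linear structure to perturb off.

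There is also a concrete step that fails as stated: the rescaling bookkeeping in the very regime it is meant to handle. When $\gamma>\frac{sp}{p-1}$ the prefactor $r^{sp-\gamma(p-1)}$ blows up, so both the rescaled right-hand side and, more seriously, the tail contributions of $u_r(y)=r^{-\gamma}(u(ry)-u(x_0))$ --- which are of size $r^{-\gamma}\,\tail_{s,p}(u;x_0,r)$ and enter the rescaled equation with the same diverging factor --- are not ``bounded by $CA$''; the induction can only close if the affine part is subtracted inside the blow-up and its tails (not integrable against the kernel) are truncated and tracked scale by scale, none of which is addressed. Likewise, the claim that $f(x_0)$ is matched because $(-\Delta_p)^s$ of a truncated power $|x|^{sp/(p-1)}$ is ``a nonzero constant at unit scale'' is not correct as stated; at best one gets one-sided barrier bounds that must still be meshed with the $q$-term and with the coefficient freezing, and this meshing is where the constraints $tq\le sp$, $p\le q$ and the $\alpha$-dependence actually have to be used quantitatively (compare the terms $J_2,J_3$ in the paper). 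So the strategy is a reasonable alternative in spirit, but in its present form it is an outline with the decisive estimates missing, not a proof of the theorem.
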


\begin{rem}
If $\xi$ is only assumed to be $\alpha$-H\"{o}lder in the first argument uniformly with respect to the second argument,
and $f$ is merely bounded, then from our proofs below it can be shown that $u\in C^{0, \tilde\gamma}(\bar{B}_1)$ where $\tilde\gamma=\min\{1, \frac{sp}{p-1}\}$ and $\frac{sp}{p-1}\neq 1$. 
\end{rem}

\begin{rem}
Unlike the estimates in \cite{BDLMS24a,BDLMS24b,BLS18,GL24}, our bounds in this work are not stable as $s\nearrow 1$ or $t\nearrow 1$. The
reason is purely technical. For example, the estimate in Lemma~\ref{L3.2} and in the proof of Theorem~\ref{T3.7} (see \eqref{ET3.7A}) depend on
$s\in (0,1)$ and we do not have a uniform choice of constant as $s\nearrow 1$ or $t\nearrow 1$.
\end{rem}

By leveraging the equivalence between weak and viscosity solutions, along with Theorem~\ref{Tmain-1}, we obtain the following result, which generalizes recent findings in \cite{BT25,BDLMS24a,BDLMS24b}:

\begin{thm}\label{T1.2}
Let $u\in W^{s, p}_{\rm loc}(B_2)\cap L^{p-1}_{sp}(\Rn)$ be a local weak solution to $(-\Delta_p)^s u=f$ in $B_2$ and $f\in C^{0,\beta}_{\rm loc}(B_2)$. Then 
$u\in C^{0, \gamma}(\bar{B}_1)$ for any $\gamma<\gamma_\circ$, where 
\[
\gamma_\circ=\left\{\begin{array}{ll}
\min\{1, \frac{sp+\beta}{p-1}, \frac{sp}{p-2}\} & \text{for}\; p>2,
\\[2mm]
\min\{1, \frac{sp+\beta}{p-1}\} & \text{for}\; p\in (1, 2].
\end{array}
\right.
\] 
Moreover, if $\gamma_\circ=\frac{sp+\beta}{p-1}<\min\{1, \frac{sp}{p-2}\}$ when $p>2$, or 
$\gamma_\circ=\frac{sp+\beta}{p-1}<1$ when $p\in (1, 2]$, we have  $u\in C^{0, \gamma_\circ}(\bar{B}_1)$.
Additionally,
if $\min\{\frac{sp+\beta}{p-1}, \frac{sp}{p-2}\}>1$ when $p>2$, or $\frac{sp+\beta}{p-1}>1$ when $p\in (1, 2]$,
we have $u\in C^{0, 1}(\bar{B}_1)$. In particular, for $f=0$, $u$ is locally Lipschitz, provided either $p\in (1, 2]$ or $sp>p-2$ when $p>2$.
\end{thm}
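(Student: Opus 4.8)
The plan is to obtain Theorem~\ref{T1.2} as a special case of Theorem~\ref{Tmain-1}, by taking $\xi\equiv0$ and invoking the equivalence between weak and viscosity solutions. When $\xi\equiv0$ the operator $\sL$ in \eqref{E2.1} equals $(-\Delta_p)^s$ regardless of the auxiliary parameters, so I would fix $q=p$ and $t=s$ (so that \eqref{para-con} holds with $tq=sp$) and regard $\xi\equiv0$ as $\alpha$-H\"older with $\alpha=1$; then $\alpha\wedge\beta=\beta$ and the exponent $\gamma_\circ$ of Theorem~\ref{Tmain-1} reduces to the one stated here, in each of its three regimes. The auxiliary tail $\tail_{t,q}(u;0,2)$ equals $\tail_{s,p}(u;0,2)$, which is finite because $u\in L^{p-1}_{sp}(\Rn)$, so the constant $A$ is finite.

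The first step is to pass from a weak to a viscosity solution. Since $\xi\equiv0$ trivially satisfies both the symmetry $\xi(x,y)=\xi(y,x)$ and the translation invariance $\xi(x+z,y+z)=\xi(x,y)$, the equivalence recalled in the introduction applies (see \cite{FZ23}, together with the local boundedness and H\"older continuity of weak solutions of the fractional $p$-Laplacian from \cite{DKP14,DKP16}): the local weak solution $u$ is continuous and is a viscosity solution of $(-\Delta_p)^su=f$ in $B_2$ in the sense of Definition~\ref{Def1.1}. At this stage $u$ satisfies all hypotheses of Theorem~\ref{Tmain-1} except that $f$ is H\"older continuous only in the interior of $B_2$, not up to $\bar B_2$.

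To close that gap I would localize and rescale. Fix $x_0\in\bar B_1$ and a radius $r\in(0,\tfrac{1}{4}]$ with $B_{2r}(x_0)\Subset B_2$, and set $v(x):=u(x_0+rx)$. A change of variables in the defining principal-value integral shows that $v$ is a viscosity solution of $(-\Delta_p)^sv=\tilde f$ in $B_2$ with $\tilde f(x):=r^{sp}f(x_0+rx)\in C^{0,\beta}(\bar B_2)$, that $v\in C(B_2)\cap L^{p-1}_{sp}(\Rn)$, and that the quantity $A$ associated with $v$ is controlled, uniformly in $x_0\in\bar B_1$, by $\sup_{B_2}|u|+\tail_{s,p}(u;0,2)$ and $r$. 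Theorem~\ref{Tmain-1} applied to $v$ (with $\xi\equiv0$, $q=p$, $t=s$, $\alpha=1$) then gives $v\in C^{0,\gamma}(\bar B_1)$ for every $\gamma<\gamma_\circ$ --- and $\gamma=\gamma_\circ$ or $\gamma=1$ in the borderline and Lipschitz regimes --- with a norm bound depending only on the admissible quantities; rescaling back yields $u\in C^{0,\gamma}(\bar B_r(x_0))$ with a bound uniform in $x_0$. Covering $\bar B_1$ by finitely many balls $B_r(x_0)$ of this fixed radius and combining the local estimates with $\sup_{B_2}|u|<\infty$ produces $u\in C^{0,\gamma}(\bar B_1)$. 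Finally, for $f\equiv0$ one may take $\beta=1$, so $\alpha\wedge\beta=1$; then $\frac{sp+1}{p-1}>1\iff sp>p-2$, which holds automatically when $p\in(1,2]$ and is assumed when $p>2$, in which case also $\frac{sp}{p-2}>1$, so $\gamma_\circ=1$ and Theorem~\ref{Tmain-1} gives $u\in C^{0,1}(\bar B_1)$. The argument contains no genuine difficulty beyond Theorem~\ref{Tmain-1}: the single external input is the weak--viscosity equivalence, and the only point needing care is the interior-to-boundary passage for $f$, which the above scaling-and-covering argument handles.
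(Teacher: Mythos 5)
Your proposal is correct and follows essentially the same route as the paper: establish continuity and local boundedness of the weak solution, invoke the weak--viscosity equivalence (the paper cites \cite{KKL19} and \cite[Proposition~1.5]{BT25} rather than \cite{FZ23}), and then apply Theorem~\ref{Tmain-1} with $\xi\equiv 0$, $q=p$, $t=s$, observing that for $f=0$ one may take $\beta=1$ so that $sp>p-2$ yields the Lipschitz regime. Your additional localization/rescaling step to pass from $f\in C^{0,\beta}_{\rm loc}(B_2)$ to the hypothesis $f\in C^{0,\beta}(\bar B_2)$ (and to replace $\sup_{B_2}|u|$ by a supremum over a compactly contained ball) is a sensible refinement that the paper leaves implicit, but it does not change the substance of the argument.
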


The regularity theory for nonlocal operators has been an active area of research over the past two decades. Since the $W^{s, p}$ energy functional converges to the 
classical $p$-Dirichlet energy after a suitable normalization \cite{BBM01}, the fractional $p$-Laplacian emerges as a natural operator for investigation.
In the context of the fractional $p$-Laplacian, qualitative H\"{o}lder estimates were initially obtained using De Giorgi-Nash-Moser theory, see \cite{APT24,BP16,Coz17,DKP16,KKP16}. Lindgren \cite{Lin16}
employed a Krylov-Safonov type argument in the viscosity setting to establish $\gamma$-H\"{o}lder regularity for some small $\gamma$. Additionally,  
Di Castro, Kuusi and Palatucci derived a Harnack estimate in \cite{DKP14}. 
A major breakthrough in quantitative H\"{o}lder estimates came with the work of Brasco, Lindgren and
Schikorra \cite{BLS18} (see also Brasco and Lindgren \cite{BL17}), who proved optimal $\gamma$-H\"{o}lder regularity in the superquadratic case $p\geq 2$. 
Specifically, for $\xi=0$ and $f\in L^\infty$, they showed that a local weak solution of \eqref{E2.1} is $\gamma$-H\"{o}lder for any $\gamma<\breve\gamma=\min\{1, \frac{sp}{p-1}\}$. 
A corresponding result for the subquadratic case ($1<p<2$) was later established by Garain and Lindgren \cite{GL24}. This marked the beginning of extensive research on higher-order fractional differentiability and H\"{o}lder regularity of solutions, including \cite{BDLM25,DKLN,DN25} and references
therein.  Fine zero-order estimates were investigated in \cite{KMS15a,KMS15b}, while boundary regularity was explored in \cite{IMS16,IMS20,KLL23,KKP16}. 
Recently, the first author and Topp \cite{BT25} proved
that the exponent $\breve\gamma$ is attained when $\frac{sp}{p-1}\neq 1$. In particular, for $\frac{sp}{p-1}>1$, local weak solutions of $(-\Delta_p)^s u=f$ ($f\in L^\infty$) are shown to be locally Lipschitz.

This article is partly motivated by the recent works of B\"{o}gelein et al. \cite{BDLMS24a,BDLMS24b} and
Diening et al. \cite{DKLN}, where the authors study fractional $p$-harmonic functions-- that is, solutions to $(-\Delta_p)^su=0$, 
and prove that such $u$ is almost $\widecheck{\gamma}$-H\"{o}lder, where $\widecheck\gamma=\min\{1, \frac{sp}{p-2}\}$ when $p>2$, and $\widecheck\gamma=1$ when $p\in (1, 2]$. Notably,
$\widecheck\gamma\geq \breve\gamma$, indicating a gain in regularity for fractional $p$-harmonic functions compared to the general case. A natural question arises: 
is $\widecheck{\gamma}$ attained? In particular, do fractional
 $p$-harmonic functions become Lipschitz continuous when $\widecheck\gamma\geq 1$? Theorem ~\ref{T1.2} answers this question in a more general setting. Another motivation for our work stems from the study of nonlocal double phase problems. De Filippis and Palatucci \cite{DFP19} were the first to investigate the regularity of \eqref{E2.1} for bounded 
 $\xi$ and $f$, establishing $\gamma$-H\"{o}lder regularity of bounded solutions for some $\gamma>0$. Their parameters required a slightly stronger condition than \eqref{para-con} (see  \cite[(2.4)--(2.5)]{DFP19}). Subsequently,  Scott and Mengesha \cite{SM22} proved a nonlocal self-improving property for bounded weak solutions, while
 Fang and Zhang \cite{FZ23} established $\gamma$-H\"{o}lder continuity of bounded solutions under the condition \eqref{para-con} with $f=0$. We also highlight the work of 
Byun, Ok and Song \cite{BOS22}, where the authors considered a different set of conditions (that is, $s\leq t$, $p\leq q, tq\leq sp+\alpha$) and proved
$\gamma$-H\"{o}lder regularity of solutions when $f=0$ and $\xi$ is $\alpha$-H\"{o}lder continuous. While the aforementioned results are of qualitative nature, this article provides a quantitative estimate, aligning with the findings of \cite{BDLMS24a,BDLMS24b}. Moreover, our analysis does not require solutions to be bounded. We also point out that Theorem~\ref{Tmain-1} answers a question raised in \cite[p.~551]{DFP19} on the higher regularity
of nonlocal double phase problems.

For the proof of Theorem~\ref{Tmain-1}, we follow the approach of \cite{BT25}, which relies on an Ishii-Lions-type argument. As previously mentioned, for $f\in L^\infty$, \cite{BT25} establishes that
the solutions of $(-\Delta_p)^s u=f$ are in $C^{0, \breve\gamma}_{\rm loc}$, provided $sp\neq p-1$. In this work, we exploit the H\"{o}lder continuity of $f$ to further enhance the regularity of solutions
to \eqref{E2.1}.  Our proof thus requires refined estimates of several key quantities compared to \cite{BT25}, particularly in Step 3 and Step 4 of the proofs of Theorem~\ref{T3.7} and ~\ref{T4.3}.
The Ishii-Lions argument was first introduced in the seminal work of Ishii and Lions \cite{IL90},
where the authors established H\"{o}lder regularity for solutions to a class of nondegenerate elliptic equations. This approach was later adapted to nonlocal operators in the work of Barles et al. \cite{BCI11} (see also \cite{BCCI12}). Since then, the Ishii-Lions-type argument has been successfully applied to a wide range of nonlinear nonlocal equations, as evidenced by \cite{BT24,BQT25,CGT22},
among others. 

The remainder of this article is organized as follows: In the next section,  we introduce the concept of viscosity solutions and outline the general strategy for the proof. In Section~\ref{s-sup}
covers the superquadratic case ($p>2$), while Section~\ref{s-sub} addresses the subquadratic case ($1<p\leq 2$). Proofs of Theorems~\ref{Tmain-1} and ~\ref{T1.2} are presented in Section~\ref{s-sub}.

Throughout the paper, we use the notations $C, C_1, C_2, .., \kappa,\kappa_1, \kappa_2,...$ to denote generic constants whose values may vary from line to line.

\section{General strategy for the proof}
We primarily focus on proving Theorem~\ref{Tmain-1}, as the proof of Theorem~\ref{T1.2} follows directly from the equivalence between weak and viscosity solutions. The solution to \eqref{E2.1} is understood in the viscosity sense, which we define below in the spirit of  \cite{KKL19}, see also \cite{FZ23}. 
First, we recall some notation from 
\cite{KKL19}.  Since, as established in \cite{KKL19}, the operator $\sL$ may not be classically defined for all $C^2$ functions, we must restrict our consideration to a suitable subclass of test functions when defining viscosity solutions.
Given an open set $D$, we denote by $C^2_\eta(D)$, a subset of $C^2(D)$, defined as
$$
C^2_\eta(D)=\left\{\phi\in C^2(D)\; :\; \sup_{x\in D}\left[\frac{\min\{d_\phi(x), 1\}^{\eta-1}}{|\nabla\phi(x)|} +
\frac{|D^2\phi(x)|}{(d_\phi(x))^{\eta-2}}\right]<\infty\right\},
$$
where
$$ 
d_\phi(x)=\dist(x, N_\phi)\quad \text{and}\quad N_\phi=\{x\in D\; :\; \nabla\phi(x)=0\}.$$

The above restricted class of test functions becomes necessary to define $\sL$ in the classical sense in the singular case, that is, for
$p\leq \frac{2}{2-s}$ or $q\leq \frac{2}{2-t}$. 
Now we are ready to define the viscosity solution from \cite[Definition~3]{KKL19}. Let us also point out that under the above stated condition \eqref{para-con} we have $\frac{sp}{p-1}\geq \frac{tq}{q-1}, \frac{2}{2-t}\leq \frac{2}{2-s}$ and
we use this fact implicitly while defining the viscosity solution.

\begin{defi}\label{Def1.1}
A function $u:\Rn\to \R$ is a viscosity subsolution to $\sL u =f $ in $\Omega$ if it satisfies the following
\begin{itemize}
\item[(i)] $u$ is upper semicontinuous in $\bar\Omega$.
\item[(ii)] If $\phi\in C^2(B_r(x_0))$ for some $B_r(x_0)\subset \Omega$ satisfies $\phi(x_0)=u(x_0)$,
$\phi\geq u$ in $B_r(x_0)$ and one of the following holds
\begin{itemize}
\item[(a)] $\min\{p-\frac{2}{2-s}, q-\frac{2}{2-t}\}>0$ or $\nabla\phi(x_0)\neq 0$,

\item[(b)] $\min\{p-\frac{2}{2-s}, q-\frac{2}{2-t}\}\leq 0$, $\nabla\phi(x_0)= 0$ is such that $x_0$ is an isolated critical point of $\phi$, and
$\phi\in C^2_\eta(B_r(x_0))$ for some $\eta>\frac{sp}{p-1}$,
\end{itemize}
then we have $\sL \phi_r(x_0)\leq f(x_0)$ where
\[
\phi_r(x)=\left\{\begin{array}{ll}
\phi(x) & \text{for}\; x\in B_r(x_0),
\\[2mm]
u(x) & \text{otherwise}.
\end{array}
\right.
\]

\item[(iii)] $u_+\in L^{p-1}_{sp}(\Rn)\cap L^{q-1}_{tq}(\Rn)$.
\end{itemize}
We say $u$ is a viscosity supersolution in $\Omega$, if $-u$ is a viscosity subsolution in $\Omega$. Furthermore, a 
viscosity solution of $\sL u= f$ in $\Omega$ is both sub and super solution in $\Omega$. 
\end{defi}

It is clear from the structure of $\sL$ that the above definition is equivalent to general testing, in the sense that $x_0$ is a local maximum point for $u - \phi$ (and not necessarily $u(x_0) = \phi(x_0)$). Same comment goes for the testing of supersolutions.
Let us now introduce the notations: $J_p(t)=|t|^{p-2} t$, $J_q(t)=|t|^{q-2} t$,
\begin{equation}\label{AB01}
\sL_p[D] w(x):=  {\rm PV}\int_D J_p(w(x)-w(x+z))\frac{dz}{|z|^{n+sp}},
\end{equation}
and
\begin{equation}\label{AB02}
\sL_q[D] w(x):=  {\rm PV}\int_D J_q(w(x)-w(x+z))\xi(x,z)\frac{dz}{|z|^{n+tq}}.
\end{equation}
With these notations we can modify the definition of viscosity solution as follows.
\begin{lem}\label{L2.2}
Let $\phi\in C^2(B_r(x_0))$ be a test function that touches $u$ from above in $B_r(x_0)$ at the point $x_0$ and either (a) or (b) in
Definition~\ref{Def1.1} holds. Let $\delta, \tilde\delta\in (0, r)$ and define
\[
v_\delta(x)=\left\{ \begin{array}{ll}
\phi(x) & \text{for}\; x\in B_\delta(x_0),
\\[2mm]
u(x) & \text{otherwise},
\end{array}
\right.
\quad and\quad 
v_{\tilde\delta}(x)=\left\{ \begin{array}{ll}
\phi(x) & \text{for}\; x\in B_{\tilde\delta}(x_0),
\\[2mm]
u(x) & \text{otherwise}.
\end{array}
\right.
\]
Then we have
$$\sL_p[\Rn] v_\delta(x_0)+\sL_q[\Rn]v_{\tilde\delta}(x_0)\leq f(x_0).$$
Analogous result holds for supersolutions.
\end{lem}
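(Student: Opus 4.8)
The plan is to deduce the assertion directly from Definition~\ref{Def1.1}(ii) by two elementary observations: that retesting $u$ with the \emph{same} function $\phi$ localized to a smaller ball is still admissible, and that the localized operators $\sL_p[\Rn]v_\rho(x_0)$ and $\sL_q[\Rn]v_\rho(x_0)$ are monotone in the localization radius $\rho$. This monotonicity is precisely what permits localizing the $p$-part and the $q$-part at the two different scales $\delta$ and $\tilde\delta$.

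First I would check that for every $\rho\in(0,r)$ the restriction $\phi|_{B_\rho(x_0)}$ is again a test function touching $u$ from above at $x_0$ in the sense of Definition~\ref{Def1.1}: it is $C^2$ on $B_\rho(x_0)$, equals $u(x_0)$ at $x_0$, and dominates $u$ on $B_\rho(x_0)\subset B_r(x_0)$. Alternative (a) depends only on the parameters and on $\nabla\phi(x_0)$, so it is inherited verbatim; alternative (b) is inherited as well, since $x_0$ stays an isolated critical point of $\phi$ and, because $N_\phi\cap B_\rho(x_0)\subseteq N_\phi\cap B_r(x_0)$, the distance $d_\phi$ does not decrease when we shrink the domain (with $d_\phi(x)=|x-x_0|$ near $x_0$), while in the regime where (b) applies one has $\eta>\frac{sp}{p-1}\geq 2$ by \eqref{para-con}; together these give $\phi\in C^2_\eta(B_\rho(x_0))$. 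Hence Definition~\ref{Def1.1}(ii) applied to $\phi|_{B_\rho(x_0)}$ yields, for every $\rho\in(0,r)$,
\begin{equation*}
\sL_p[\Rn]v_\rho(x_0)+\sL_q[\Rn]v_\rho(x_0)\leq f(x_0),
\end{equation*}
where $v_\rho=\phi$ on $B_\rho(x_0)$ and $v_\rho=u$ elsewhere, the principal value integrals being understood (and finite) exactly as in Definition~\ref{Def1.1}.

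Next I would establish the monotonicity. For $0<\rho_1<\rho_2<r$ the integrands of $\sL_p[\Rn]v_{\rho_1}(x_0)$ and $\sL_p[\Rn]v_{\rho_2}(x_0)$ coincide for $|z-x_0|<\rho_1$ and for $|z-x_0|\geq\rho_2$, while on the annulus $\rho_1\leq|z-x_0|<\rho_2$ one has, using $\phi(x_0)=u(x_0)$ and $\phi(x_0+z)\geq u(x_0+z)$, that $\phi(x_0)-\phi(x_0+z)\leq \phi(x_0)-u(x_0+z)$, whence by monotonicity of $J_p$ the $v_{\rho_2}$-integrand there does not exceed the $v_{\rho_1}$-integrand (no principal value issue arises on this annulus, which avoids $x_0$). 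Thus $\rho\mapsto\sL_p[\Rn]v_\rho(x_0)$ is non-increasing on $(0,r)$, and, using $\xi(x_0,z)\geq0$ and monotonicity of $J_q$, so is $\rho\mapsto\sL_q[\Rn]v_\rho(x_0)$.

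Finally I would combine these: if $\delta\leq\tilde\delta$, apply the displayed inequality with $\rho=\delta$ together with $\sL_q[\Rn]v_{\tilde\delta}(x_0)\leq\sL_q[\Rn]v_\delta(x_0)$ to obtain $\sL_p[\Rn]v_\delta(x_0)+\sL_q[\Rn]v_{\tilde\delta}(x_0)\leq\sL_p[\Rn]v_\delta(x_0)+\sL_q[\Rn]v_\delta(x_0)\leq f(x_0)$; if instead $\tilde\delta\leq\delta$, apply it with $\rho=\tilde\delta$ together with $\sL_p[\Rn]v_\delta(x_0)\leq\sL_p[\Rn]v_{\tilde\delta}(x_0)$. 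The supersolution case follows by applying the subsolution case to $-u$. I expect the only genuinely delicate point to be the verification in Step~1 that alternative (b)---especially the $C^2_\eta$ membership---survives restriction to a smaller ball; the remaining steps are routine bookkeeping with the monotonicity of $J_p$, $J_q$ and the sign of $\xi$.
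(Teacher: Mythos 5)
Your main argument follows essentially the same route as the paper: the paper sets $\hat\delta=\delta\wedge\tilde\delta$, applies Definition~\ref{Def1.1} once to the test function glued at radius $\hat\delta$, and then passes to the radii $\delta$ and $\tilde\delta$ using $\phi\ge u$, the monotonicity of $J_p$, $J_q$ and $\xi\ge 0$; your monotonicity of $\rho\mapsto\sL_p[\Rn]v_\rho(x_0)$ and $\rho\mapsto\sL_q[\Rn]v_\rho(x_0)$ combined with one application of the definition at $\rho=\delta\wedge\tilde\delta$ is exactly that computation, and it is correct.

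The one place where your writeup overreaches is the admissibility of the restricted test function under alternative (b). The parameter observation is fine: under \eqref{para-con} one has $t\le s$, so $q\le\frac{2}{2-t}$ forces $p\le\frac{2}{2-s}$, and hence whenever (b) is in force $\frac{sp}{p-1}\ge 2$, so $\eta>2$. But the deduction ``$d_\phi$ does not decrease on the smaller ball and $\eta\ge 2$, hence $\phi\in C^2_\eta(B_\rho(x_0))$'' does not follow: the second term $|D^2\phi|/d_\phi^{\eta-2}$ of the $C^2_\eta$ seminorm indeed improves when $d_\phi$ increases, but the first term $\min\{d_\phi,1\}^{\eta-1}/|\nabla\phi|$ becomes \emph{larger}, and it can blow up if $\phi$ has another (degenerate) critical point in $B_r(x_0)\setminus B_\rho(x_0)$ lying on or near $\partial B_\rho(x_0)$: there $|\nabla\phi|\to 0$ while $d_{\phi|_{B_\rho(x_0)}}$ stays bounded below, so membership in $C^2_\eta$ is not automatically inherited by the smaller ball. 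As stated this step is a gap, though it only concerns the singular case (b); note that the paper does not verify this point either (it simply invokes Definition~\ref{Def1.1} at radius $\hat\delta$), and in every application of the lemma in the paper the test function has nonvanishing gradient at the touching point, so only case (a) is ever used. A complete argument for (b) would require either restricting to radii for which $\overline{B_\rho(x_0)}$ contains no critical point of $\phi$ other than $x_0$, or appealing to the radius-changing results of \cite{KKL19}.
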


\begin{proof}
For  $\hat\delta=\delta\wedge\tilde\delta$ and
\[
\hat{v}(x):=\left\{ \begin{array}{ll}
\phi(x) & \text{for}\; x\in B_{\hat\delta}(x_0),
\\[2mm]
u(x) & \text{otherwise},
\end{array}
\right.
\]
we know from Definition~\ref{Def1.1} that
$\sL \hat{v}(x_0)= \sL_p[\Rn] \hat{v}(x_0) + \sL_q[\Rn] \hat{v}(x_0)\leq f(x_0)$. Now note that
$v_\delta(x_0)-v_\delta(x_0+z)= u(x_0)-v_\delta(x_0+z)\leq \hat{v}(x_0)-\hat{v}(x_0+z)$ for all $z$. Similarly, 
$v_{\tilde\delta}(x_0)-v_{\tilde\delta}(x_0+z)\leq \hat{v}(x_0)-\hat{v}(x_0+z)$ for all $z$. Thus, using the monotonicity of
$J_p$ and $J_q$, we obtain $\sL_p[\Rn] v_\delta(x_0)+\sL_q[\Rn]v_{\tilde\delta}(x_0)\leq f(x_0)$.
\end{proof}

\subsection{General strategy}\label{S-genstr}
In  this section we explain our strategy for the proofs.
Fix $1\leq \varrho_1<\varrho_2\leq 2$, and define the doubling
function
\begin{equation}\label{E2.2}
\Phi(x, y)= u(x)-u(y)-L\varphi(|x-y|)- m_1 \psi(x)\quad x, y\in B_2,
\end{equation}
where 
$$
\psi(x) = [(|x|^2-\varrho^2_1)_+]^{m}\quad   x \in B_2,
$$ 
is a {\it localization} function. We set $m\geq 3$ so that $\psi\in C^2(B_2)$. 

The function $\varphi:[0, 2] \to [0, \infty)$ is a suitable {\it regularizing} function, encoding the modulus of continuity for $u$. 

For calculations below, we use two types of regularizing function $\varphi$ (after an appropriate scaling):
\begin{equation}\label{varphi}
\begin{split}
 \varphi_\gamma(t) &=t^{\gamma}\quad \mbox{with} \ \gamma\in (0, 1) \quad \mbox{(for H\"older profile)}, 
 \\
 \tilde\varphi(t) &=\left\{\begin{array}{ll}
t+\frac{t}{\log t}& \text{for}\; x>0,
\\
0 & \text{for}\; x = 0.
\end{array}
\right.
 \quad  \mbox{(for Lipschitz profile)}.
\end{split}
\end{equation}
Notice that the above functions are increasing and concave on a neighbourhood of $t = 0$. 

We show that for a sufficiently large $m_1, m$,
and for all $L$ large enough, dependent on $p, q, s, t, A, M$, we have $\Phi\leq 0$ in $B_2\times B_2$, which leads to the desired result.

We proceed by contradiction, assuming that $\sup_{B_2\times B_2}\Phi>0$ for all large $L$. Let us choose $m_1$ large enough, dependent on
$\varrho_2, \varrho_1$ and $m$, so that $m_1 \psi(x)> 2A$ for all $|x|\geq \frac{\varrho_2+\varrho_1}{2}$. Then for all 
$|x|\geq \frac{\varrho_2+\varrho_1}{2}$, we have $\Phi(x, y)<0$ for all $y\in B_2$. Again, since $\varphi$ is strictly increasing in
$[0, 2]$, if we choose $L$ to satisfy $L\varphi(\frac{\varrho_2-\varrho_1}{4})>2A$, we obtain $\Phi(x, y)<0$ whenever
$|x-y|\geq \frac{\varrho_2-\varrho_1}{4}$. Thus, there exist $\xbar\in B_{\frac{\varrho_2+\varrho_1}{2}}$ and 
$\ybar\in B_{\frac{3\varrho_2}{4}+\frac{\varrho_1}{4}}$ such that
\begin{equation}\label{E2.3}
\sup_{B_2\times B_2}\Phi=\Phi(\xbar,\ybar)>0.
\end{equation}
Denote by $\abar=\xbar-\ybar$. From \eqref{E2.3} we have $\abar\neq 0$, and moreover, we have that
\begin{equation}\label{AB03}
L \varphi(|\bar a|) \leq u(\bar x) - u(\bar y) \leq 2A,
\end{equation}
in view of~\eqref{E2.2}. This implies that $|\bar a|$ is small as $L$ enlarges.
Let us denote by
$$
\phi(x, y) := L\varphi(|x-y|)+ m_1 \psi(x).
$$
Note that
\begin{center}
$x\mapsto u(x) - \phi(x, \ybar)$ has a local maximum point at $\xbar$, and\\[2mm]
$y \mapsto u(y) +\phi(\xbar,y)$ has a local minimum point at $\ybar$.
\end{center}
For $\delta,\tilde\delta\in (0, \frac{\varrho_2-\varrho_1}{4})$ to be chosen later, we define the following test functions

\[
w_1(z)=\left\{\begin{array}{ll}
\phi(z, \ybar) + \kappa_{\xbar} & \text{if}\; z\in B_\delta(\bar x),
\\[2mm]
u(z) & \text{otherwise},
\end{array}
\right.
\quad \text{and}\quad
\tilde{w}_1(z)=\left\{\begin{array}{ll}
\phi(z, \ybar) +\kappa_{\xbar} & \text{if}\; z\in B_{\tilde\delta}(\bar x),
\\[2mm]
u(z) & \text{otherwise},
\end{array}
\right.
\]
with $\kappa_{\xbar}=u(\xbar)- \phi(\xbar, \ybar)$, and
\[
w_2(z)=\left\{\begin{array}{ll}
-\phi(\xbar, z)+ \kappa_{\ybar} & \text{if}\; z\in B_\delta(\bar y),
\\[2mm]
u(z) & \text{otherwise},
\end{array}
\right.
\quad \text{and}\quad
\tilde{w}_2(z)=\left\{\begin{array}{ll}
-\phi(\xbar, z)+ \kappa_{\ybar} & \text{if}\; z\in B_{\tilde\delta}(\bar y),
\\[2mm]
u(z) & \text{otherwise},
\end{array}
\right.
\]
with $\kappa_{\ybar}= u(\ybar) + \phi(\xbar, \ybar)$.

An important point here is that, regardless of the choice of $\varphi$ above, for all sufficiently large
$L$ (depending on $m$ and $m_1$), we must have $\nabla_x\phi(\xbar,\ybar)\neq 0$ and $\nabla_y\phi(\xbar,\ybar)\neq 0$. Thus, from 
Lemma~\ref{L2.2}
and \eqref{E2.1}, we obtain
$$
\sL_p[\Rn] w_1(\bar x) + \sL_q[\Rn] \tilde{w}_1(\bar x)\leq f(\xbar)\quad \text{and}\quad 
\sL_p[\Rn] w_2 (\bar y) + \sL_q[\Rn]\tilde{w}_2(\ybar) \geq f(\ybar).
$$
As can be seen from \cite{KKL19}, the above principal values are well defined.
Subtracting the viscosity inequalities at $\bar x$ and $\bar y$, we obtain
\begin{equation}\label{E2.4}
\sL_p[\Rn] w_1(\bar x)-\sL_p[\Rn] w_2(\bar y) + \sL_q[\Rn] \tilde{w}_1(\xbar) -\sL_q[\Rn] \tilde{w}_2(\ybar)
\leq f(\bar x) - f(\bar y) \leq C |\bar a|^{\beta}.
\end{equation}
We define the following domains
$$\cone=\{z\in B_{\delta_0|\abar|}\; :\; |\langle \abar, z\rangle|\geq (1-\eta_0) |\abar||z|\},
\quad \cD_1=B_\delta \cap \cone^c, \quad \text{and}\quad \cD_2=B_{\tilde\varrho}\setminus (\cD_1\cup\cone),$$
where $\delta_0=\eta_0\in (0, \frac{1}{2})$ would be chosen later, $\tilde\varrho=\frac{1}{4}(\varrho_2-\varrho_1)$, and, in general, $\delta << \delta_0 |\bar a| << \tilde \varrho$. See Figure~\ref{figure} .
\begin{figure}[ht]
\begin{center}
\includegraphics[scale=0.24]{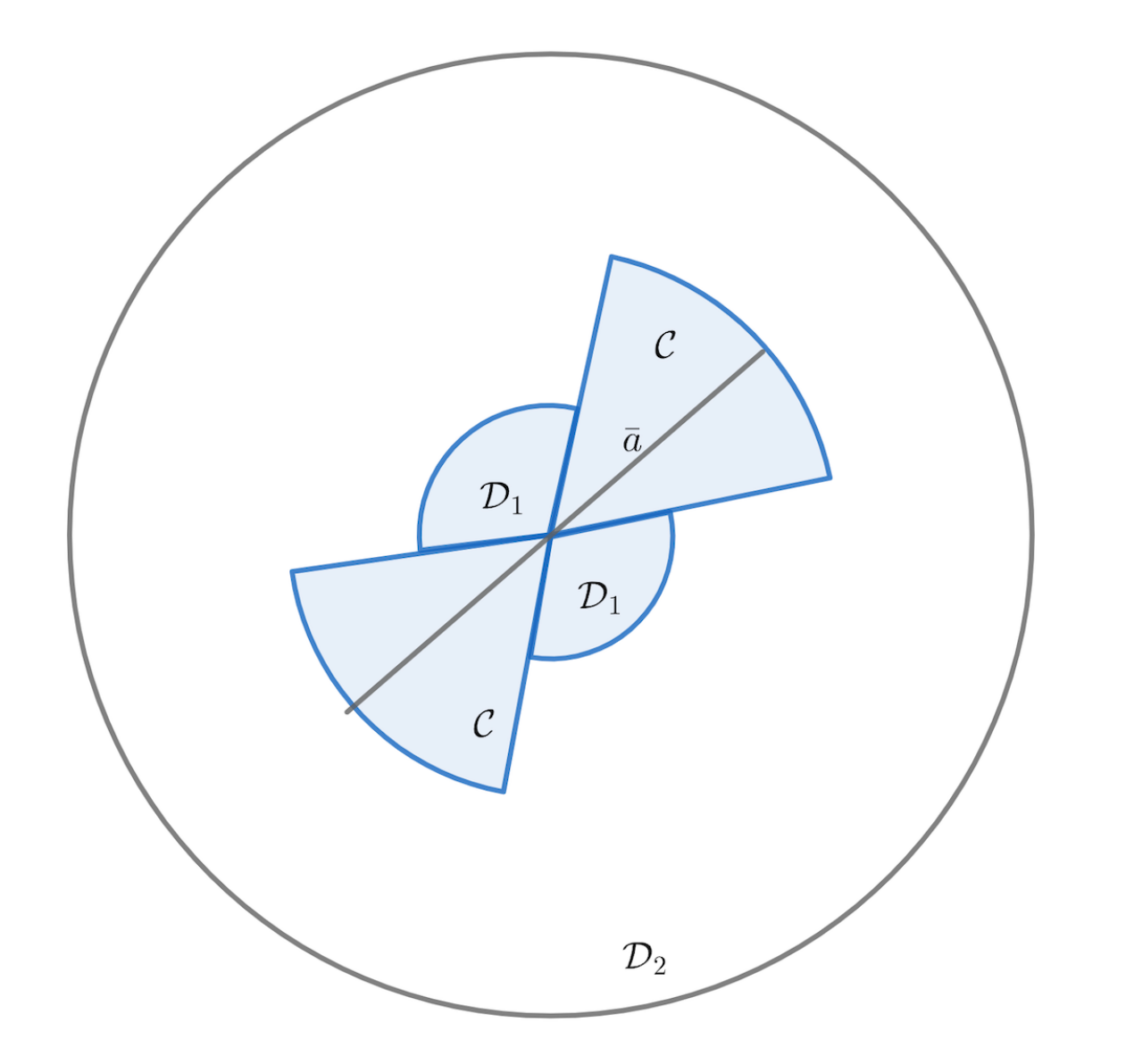}
\end{center}
\caption{Relevant sets in the nonlocal viscosity evaluation.}
\label{figure}
\end{figure}
Also, let $\tilde{\cD}_2= B_{\tilde\varrho}\setminus B_{\tilde\delta}$. Using \eqref{AB01}, \eqref{AB02} and \eqref{E2.4} we arrive at
\begin{align}\label{E2.7}
&\underbrace{\sL_p[\cone] w_1(\bar x)-\sL_p[\cone] w_2(\bar y)}_{=I_1} + \underbrace{\sL_p[\cD_1] w_1(\bar x)-\sL_p[\cD_1] w_2(\bar y)}_{=I_2} + \nonumber
\\ 
& \underbrace{\sL_p[\cD_2] w_1(\bar x)-\sL_p[\cD_2] w_2(\bar y)}_{=I_3}
+ \underbrace{\sL_p[B^c_{\tilde\varrho}] w_1(\bar x)-\sL_p[ B^c_{\tilde\varrho}] w_2(\bar y)}_{=I_4} + \underbrace{\sL_q[B_{\tilde\delta}] \tilde{w}_1(\bar x) -\sL_q[B_{\tilde\delta}] \tilde{w}_2(\bar y)}_{=J_1} \nonumber
\\
&  + \underbrace{\int_{\tilde{\cD}_2}[J_q(\tilde{w}_1(\xbar) - \tilde{w}_1(\xbar + z))-J_q(\tilde{w}_2(\ybar) - \tilde{w}_2(\ybar + z))]\frac{\xi(\xbar, z)}{|z|^{n+tq}}\dz}_{=J_2} + \nonumber
\\
& \underbrace{\int_{\tilde{\cD}_2}\left[J_q(\tilde{w}_2(\ybar) - \tilde{w}_2(\ybar + z))\right]\frac{\xi(\xbar, z)-\xi(\ybar, z)}{|z|^{z+tq}}\dz}_{=J_3} + 
 \underbrace{\sL_q[B^c_{\tilde\varrho}] \tilde{w}_1(\bar x)-\sL_q[ B^c_{\tilde\varrho}] \tilde{w}_2(\bar y)}_{=J_4} \leq C |\abar|^{\beta}.
\end{align}
Our main goal is to estimate the terms $I_i, J_i, i=1,2,3,4$ suitable so that \eqref{E2.7} leads to contradiction for all $L$ large enough. 

We conclude the section by recalling the following algebraic estimate from
\cite{KKL19}.

\begin{lem}\label{L2.4}
Suppose $p>1$ and $a, b\in \R$. Then 
\begin{equation*}
\frac{1}{c_p} (|a|+|b|)^{p-2}\leq \int_0^1 |a+tb|^{p-2} dt\leq c_p (|a|+|b|)^{p-2},
\end{equation*}
for some constant $c_p$, depending only on $p$. 
\end{lem}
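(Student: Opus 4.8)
The plan is to establish both inequalities by a scaling reduction followed by a short case analysis, ultimately resting on the classical equivalence $|J_p(x)-J_p(y)|\approx(|x|+|y|)^{p-2}|x-y|$ for the map $J_p(r)=|r|^{p-2}r$ (which could simply be cited, consistently with the lemma being \emph{recalled} from \cite{KKL19}, but which I would reprove to keep the argument self-contained). First, both the integral $F(a,b):=\int_0^1|a+tb|^{p-2}\dif t$ and the quantity $(|a|+|b|)^{p-2}$ are positively homogeneous of degree $p-2$ under $(a,b)\mapsto(\lambda a,\lambda b)$, $\lambda>0$, so it suffices to prove the estimate when $s:=|a|+|b|=1$, i.e. to trap $F(a,b)$ between two positive constants depending only on $p$; the final $c_p$ is the largest constant produced in the cases below. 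Two of the four one-sided bounds are then immediate from $|a+tb|\le s$ on $[0,1]$: when $p\ge 2$ the integrand is $\le s^{p-2}$, giving $F\le s^{p-2}$; when $1<p\le 2$ the integrand is $\ge s^{p-2}$, giving $F\ge s^{p-2}$. Only the upper bound for $1<p<2$ and the lower bound for $p>2$ remain.

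For these, when $b\neq 0$ (the case $b=0$ being trivial) the substitution $u=a+tb$ gives the identity $F(a,b)=\frac1{|b|}\int_I|u|^{p-2}\dif u=\frac{J_p(a+b)-J_p(a)}{(p-1)b}$, where $I$ is the segment of length $|b|$ joining $a$ to $a+b$; this remains valid across a sign change of $a+tb$ since $J_p$ is absolutely continuous with $J_p'=(p-1)|\cdot|^{p-2}\in L^1_{\mathrm{loc}}$ (recall $p-2>-1$). The claim then follows from the classical two-sided bound applied with $x=a+b$, $y=a$, once one notes $|a+b|+|a|\approx|a|+|b|$ (from $\max(|a|,|b|)\le|a+b|+|a|\le 2(|a|+|b|)$). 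Alternatively, one estimates $\int_I|u|^{p-2}\dif u$ directly: splitting on whether $0\in I$ and using sub- and super-additivity of $r\mapsto r^{p-1}$ one obtains, for $1<p<2$, $F(a,b)\le\frac{2^{2-p}}{p-1}|b|^{p-2}$, and a one-line case split on $|b|$ versus $\tfrac14 s$ (using $p-2<0$) converts the right-hand side into $C_p\,s^{p-2}$; while for $p>2$ one uses that $t\mapsto|a+tb|$ is $|\,\text{affine}\,|$ on $[0,1]$, so either it has no zero there — then it is affine and at least $\tfrac18 s$ on a subinterval of length $\tfrac12$, whence $F\ge\tfrac12(\tfrac18 s)^{p-2}$ — or it vanishes once inside $[0,1]$ — then $F=\frac{|a|^{p-1}+(|b|-|a|)^{p-1}}{(p-1)|b|}$ and superadditivity of $r\mapsto r^{p-1}$ bounds the numerator below by $|b|^{p-1}\ge(\tfrac12 s)^{p-1}$, giving $F\ge c_p\,s^{p-2}$.

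The only delicate point I anticipate is the subquadratic range $1<p<2$, where $|a+tb|^{p-2}$ is unbounded at $t=-a/b$ whenever $a$ and $b$ have opposite signs with $|a|\le|b|$. Integrability is never in question since $p-2>-1$, but both the exact formula and the rearrangement-type bound above must be organized so that the portion of $[0,1]$ on which $a+tb$ stays away from $0$ is separated from the portion on which it crosses $0$ — precisely the $0\in I$ versus $0\notin I$ dichotomy used above. Everything else is routine.
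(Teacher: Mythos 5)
The paper itself offers no proof of this lemma --- it is quoted directly from \cite{KKL19} --- so there is no in-paper argument to compare against; you were effectively asked to supply the standard proof, and your scaling-plus-case-analysis route is essentially the right one. The homogeneity reduction, the two trivial one-sided bounds ($F\le (|a|+|b|)^{p-2}$ for $p\ge 2$, $F\ge (|a|+|b|)^{p-2}$ for $1<p\le 2$), the identity $F(a,b)=\frac{J_p(a+b)-J_p(a)}{(p-1)b}$ for $b\neq 0$, and the $p>2$ ``no zero in $[0,1]$'' branch (the affine function is at least $\tfrac18(|a|+|b|)$ on a subinterval of length $\tfrac12$, since its endpoint maximum is at least $\tfrac14(|a|+|b|)$) all check out. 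In the subquadratic upper bound, the phrase ``converts the right-hand side'' is slightly misleading: when $|b|<\tfrac14 s$ the bound $\frac{2^{2-p}}{p-1}|b|^{p-2}$ is \emph{larger} than $s^{p-2}$ and cannot be converted; there you must instead bound the integrand directly by $(|a|-|b|)^{p-2}\le(\tfrac12 s)^{p-2}$, which is clearly what you intend, so this is only a presentational compression. Your remark that citing $|J_p(x)-J_p(y)|\approx(|x|+|y|)^{p-2}|x-y|$ would be near-circular is apt, since that equivalence is normally proved via exactly this integral representation; the direct estimates are therefore the substantive part.

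One step is wrong as written. In the $p>2$ branch where $a+tb$ vanishes inside $[0,1]$, you invoke ``superadditivity of $r\mapsto r^{p-1}$'' to bound $|a|^{p-1}+(|b|-|a|)^{p-1}$ from below by $|b|^{p-1}$. For $p>2$ superadditivity gives the \emph{opposite} inequality, $(\alpha+\beta)^{p-1}\ge\alpha^{p-1}+\beta^{p-1}$, and the claimed lower bound is in fact false (take $\alpha=\beta=\tfrac12|b|$, $p=3$: the sum is $\tfrac12|b|^2<|b|^2$). The repair is immediate and only costs a constant: by convexity (power-mean inequality) $\alpha^{p-1}+\beta^{p-1}\ge 2^{2-p}(\alpha+\beta)^{p-1}=2^{2-p}|b|^{p-1}$, or even more crudely $\max(\alpha,\beta)\ge\tfrac12|b|$ gives the numerator at least $(\tfrac12|b|)^{p-1}$; since in this branch $|a|\le|b|$ forces $|b|\ge\tfrac12(|a|+|b|)$, you still conclude $F\ge c_p(|a|+|b|)^{p-2}$. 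With that correction the proof is complete and self-contained.
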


\section{Proof in the superquadratic case $p>2$}\label{s-sup}
We begin with the estimates of $I_i, i=1,.., 4$. The following estimate can be found in \cite{BT25}

\begin{lem}[Estimate of $I_1$]\label{L3.1}
Let $p\in (1, \infty)$.
For $0<|\abar|\leq \frac{1}{8}$, consider the cone 
$\cone=\{z\in B_{\delta_0|\abar|}\; :\; |\langle \abar, z\rangle| \geq (1-\eta_0) |\abar||z|\}$, where $\delta_0=\eta_0\in (0, 1)$.
Then 
\begin{itemize}
\item[(i)] For $\varphi(t)=\varphi_\gamma(t)=t^\gamma$, $\gamma\in (0, 1)$, there exist $L_0, \delta_0$, dependent on $m, m_1, \gamma$, such that
$$I_1\geq C L^{p-1}|\abar|^{\gamma(p-1)-sp}$$
for all $L\geq L_0$, where the constant $C$ depends on $\delta_0, p, s, \gamma, n$.

\item[(ii)] Let $r_0>0$ be small enough so that for $r\in (0, r_0]$ we have
\begin{gather*}
\frac{r}{2}\leq \tilde\varphi(r)\leq r, \quad \frac{1}{2}\leq \tilde\varphi'(r)\leq 1,
\\
-2(r\log^2(r))^{-1}\leq\tilde\varphi^{\prime\prime}(r)\leq -(r\log^2(r))^{-1}.
\end{gather*}
Letting $\varphi(t) = \tilde \varphi(\frac{r_\circ}{3}t)$,  there exist $L_0, \delta_1$, independent of $\abar$,
such that for $\delta_0=\delta_1(\log^2|\abar|)^{-1}$ we have
$$I_1\geq C L^{p-1} |\abar|^{p-1-sp} (\log^2(|\abar|))^{-\uptheta}$$
for all $L\geq L_0$, where $\uptheta=\frac{n+1}{2}+p-sp$ and the constant $C$ depends only on $\delta_1, p, s, n$.
\end{itemize}
\end{lem}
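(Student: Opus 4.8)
The plan is to estimate $I_1=\sL_p[\cone] w_1(\bar x)-\sL_p[\cone] w_2(\bar y)$ from below by exploiting the concavity of $\varphi$ along the segment joining $\bar x$ and $\bar y$, which is the standard mechanism in Ishii--Lions arguments. The key observation is that on the cone $\cone$ the increment variable $z$ is almost parallel to $\abar=\bar x-\bar y$, so that the second difference of $\varphi(|\cdot|)$ in the direction of $z$ is controlled by $\varphi''$, which is strictly negative. More precisely, for $z\in\cone$ one writes
\[
w_1(\bar x)-w_1(\bar x+z)=\phi(\bar x,\bar y)-\phi(\bar x+z,\bar y)=L\big(\varphi(|\abar|)-\varphi(|\abar+z|)\big)-m_1\big(\psi(\bar x)-\psi(\bar x+z)\big),
\]
and similarly $w_2(\bar y)-w_2(\bar y+z)=L\big(\varphi(|\abar|)-\varphi(|\abar-z|)\big)$ (here $w_2$ carries no $\psi$ term). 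Subtracting and using $J_p$, the leading contribution is governed by the quantity $2\varphi(|\abar|)-\varphi(|\abar+z|)-\varphi(|\abar-z|)$. Since $z$ is nearly aligned with $\abar$, a Taylor expansion gives this quantity $\approx -\varphi''(|\abar|)\,\langle \abar,z\rangle^2/|\abar|^2$ up to lower order terms involving the transversal part of $z$ and higher derivatives of $\varphi$; because $\varphi$ is concave near $0$, this is positive of order $|\varphi''(|\abar|)|\,|z|^2$.

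The steps, in order, would be: (1) Fix $z\in\cone$, write $\abar+z$ and $\abar-z$ in terms of the component of $z$ along $\abar$ and the transversal component, and Taylor-expand $r\mapsto\varphi(|\abar+rz|)$ to second order; here one must be careful that $|\abar+z|$ and $|\abar-z|$ stay bounded away from the bad scales, which holds since $|z|\le\delta_0|\abar|$ with $\delta_0$ small. (2) Show that the second-difference quantity is bounded below by $c\,|\varphi''(|\abar|)|\,|z|^2$ after absorbing the transversal error using the smallness of $\eta_0$, and absorbing the $\psi$-contribution: since $\psi\in C^2$ and $m_1$ is fixed while $L$ is sent to infinity, the $m_1$-term is a lower-order perturbation of size $O(m_1|z|^2)$, negligible compared to $L|\varphi''(|\abar|)||z|^2$ once $L\ge L_0$. (3) Use Lemma~\ref{L2.4} to pass from the $J_p$-difference to $(|a|+|b|)^{p-2}$ times the second difference, where $a,b$ are the two increments; one checks that $|a|+|b|\gtrsim L\varphi'(|\abar|)|z|$ on the relevant part of the cone (the gradient term dominates since $\nabla_x\phi(\bar x,\bar y)\neq0$), giving a factor $(L\varphi'(|\abar|))^{p-2}|z|^{p-2}$. (4) Integrate against $|z|^{-n-sp}\dz$ over $\cone$, i.e.\ over $\{|z|\le\delta_0|\abar|\}$ intersected with the thin angular sector; the radial integral $\int_0^{\delta_0|\abar|}|z|^{p-sp}\,\frac{d|z|}{|z|}$ converges at $0$ (using $p>sp$, equivalently $s<1$) and produces $(\delta_0|\abar|)^{p-sp}$, while the angular integral over the $\eta_0$-sector contributes a factor comparable to $\eta_0^{(n-1)/2}$. (5) Collect: $I_1\gtrsim L^{p-1}\varphi'(|\abar|)^{p-2}|\varphi''(|\abar|)|\,|\abar|^{2}\cdot(\delta_0|\abar|)^{p-sp}\cdot|\abar|^{-2}$, wait—more carefully, $I_1\gtrsim L^{p-1}\varphi'(|\abar|)^{p-2}|\varphi''(|\abar|)|\,(\delta_0|\abar|)^{p-sp}|\abar|^{-(p-2)}$ once one tracks that the $|z|^2$ from the second difference and $|z|^{p-2}$ from Lemma~\ref{L2.4} combine with the kernel.

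For part (i), with $\varphi=\varphi_\gamma$, $\varphi'(t)=\gamma t^{\gamma-1}$ and $\varphi''(t)=\gamma(\gamma-1)t^{\gamma-2}$ so $|\varphi''|\sim t^{\gamma-2}$; plugging in and simplifying the powers of $|\abar|$ should yield exactly $I_1\ge CL^{p-1}|\abar|^{\gamma(p-1)-sp}$, with the constant depending on $\delta_0,p,s,\gamma,n$ through the angular factor and the radial integral. For part (ii), with $\varphi(t)=\tilde\varphi(\tfrac{r_\circ}{3}t)$ one uses the two-sided bounds $\tfrac12\le\tilde\varphi'\le1$ and $|\tilde\varphi''(r)|\sim (r\log^2 r)^{-1}$ from the hypotheses; the extra choice $\delta_0=\delta_1(\log^2|\abar|)^{-1}$ is forced because the second-difference lower bound now carries a factor $|\tilde\varphi''|\sim(|\abar|\log^2|\abar|)^{-1}$ and one needs the transversal/higher-order errors in the Taylor expansion (which are $O(\varphi'''\cdot|z|^3)$ or $O(\eta_0\cdot\text{leading})$) to remain subordinate; shrinking $\delta_0$ logarithmically exactly compensates, and tracking all the $\log$ powers gives the stated exponent $\uptheta=\tfrac{n+1}{2}+p-sp$ (the $\tfrac{n+1}{2}$ from the angular factor $\delta_0^{(n-1)/2}$ combined with one more power from $\delta_0^{p-sp}$—actually $\delta_0^{(n-1)/2+p-sp}$, and $(n-1)/2+p-sp$... one must check this matches $(n+1)/2+p-sp$, the discrepancy presumably coming from an additional $\delta_0^{-1}$ or a $\log$ from the $|\abar|$-power bookkeeping).

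The main obstacle I expect is Step (2)--(3): getting a clean \emph{lower} bound on the second difference requires that the transversal component of $z$ and the third-order Taylor remainder of $\varphi$ do not swamp the (small, since $\varphi''$ is small in the Lipschitz case) leading term, and this is exactly where the interplay between $\eta_0$, $\delta_0$, and the profile $\varphi$ becomes delicate — it is the reason $\delta_0$ must be taken proportional to $\eta_0$ and, in case (ii), logarithmically small. A secondary subtlety is ensuring the principal value is harmless on $\cone$: since $w_1,w_2$ agree with the smooth $\phi$ inside $B_\delta\supset$ part of $\cone$ near the origin, the integrand is bounded there and the $\mathrm{PV}$ reduces to an ordinary integral, but one should note $\delta\ll\delta_0|\abar|$ so that most of $\cone$ lies outside $B_\delta$, where $w_i=u$ — hence the cleaner way is to bound the whole cone integral using the concavity estimate valid wherever the test functions are defined by $\phi$, and separately control the tail-type contribution from $z\in\cone\setminus B_\delta$; since this estimate appears verbatim in \cite{BT25}, I would ultimately cite that computation and only reproduce the bookkeeping needed to track the dependence of constants.
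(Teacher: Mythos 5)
Your overall mechanism is the same as the paper's: the printed proof is itself only a short reduction to \cite[Lemmas~2.2, 3.1 and 4.1]{BT25} together with the cone-integral estimate of \cite{BCCI12}, i.e.\ precisely the Ishii--Lions computation you outline (Lemma~\ref{L2.4} producing the factor $(L\varphi'(|\abar|))^{p-2}|z|^{p-2}$, concavity of $\varphi$ producing $|\varphi''(|\abar|)|\,|z|^{2}$, and $\int_{\cone}|z|^{p-n-sp}\dz\gtrsim \eta_0^{\frac{n-1}{2}}(\delta_0|\abar|)^{p-sp}$), and you too ultimately defer to \cite{BT25} for the details.

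One step of your plan, however, would fail if executed as written: the proposal to treat $z\in\cone\setminus B_\delta$ (where $w_1=w_2=u$) as a ``tail-type contribution to be controlled separately.'' At the stage where this lemma is invoked, no modulus of continuity of $u$ is available beyond boundedness near $\xbar,\ybar$, so a crude bound of that region is of order $\delta^{-sp}\sim(\varepsilon_1|\abar|)^{-sp}$, which is comparable to the main term $L^{p-1}|\abar|^{\gamma(p-1)-sp}$ (recall $L\varphi(|\abar|)\leq 2A$ by \eqref{AB03}); it cannot be absorbed. The mechanism that actually makes the cone estimate work is that the maximum-point inequalities $\Phi(\xbar+z,\ybar)\leq\Phi(\xbar,\ybar)$ and $\Phi(\xbar,\ybar+z)\leq\Phi(\xbar,\ybar)$ bound the $u$-increments one-sidedly by the corresponding $\phi$-increments, and monotonicity of $J_p$ then extends the concavity-based lower bound to all of $\cone$, not merely to $\cone\cap B_\delta$; this is how \cite{BT25} proceeds. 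Two smaller bookkeeping points: the collected bound is $I_1\gtrsim L^{p-1}(\varphi'(|\abar|))^{p-2}|\varphi''(|\abar|)|\,\eta_0^{\frac{n-1}{2}}(\delta_0|\abar|)^{p-sp}$, i.e.\ your first expression in Step (5) is the correct one and the ``corrected'' version carrying an extra $|\abar|^{-(p-2)}$ is not (also, for $p\in(1,2)$ one needs the upper bound $|a|+|b|\lesssim L\varphi'(|\abar|)|z|$, since $\tau\mapsto\tau^{p-2}$ is then decreasing, which is why the paper cites \cite[Lemma~4.1]{BT25} separately); and in part (ii) the power of $\log^2|\abar|$ you could not account for comes exactly from $|\varphi''(|\abar|)|\sim(|\abar|\log^2|\abar|)^{-1}$, so the total logarithmic loss is $1+\frac{n-1}{2}+(p-sp)=\uptheta$ and there is no discrepancy.
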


\begin{proof}
First let $p\geq 2$. Using \cite[Lemma~3.1]{BT25} we can find $L_0$ such that
$$I_1\geq \kappa (L\varphi'(|a|))^{p-2}\int_{\cone} |z|^{p-2-n-sp} (\triangle^2\phi(\cdot, \ybar)(\xbar, z) + \triangle^2\phi(\xbar, \cdot)(\ybar, z)))\dz$$
for all $L\geq L_0$, where $\triangle^2 g(x, z):= g(x) - g(x+z) +\nabla g(x)\cdot z$. Now use \cite[Lemma~2.2]{BT25} which provides  estimates on
$\triangle^2\phi(\cdot, \ybar)(\xbar, z)$ and $\triangle^2\phi(\xbar, \cdot)(\ybar, z)$ together with the estimate of the integral (see \cite[Example~1]{BCCI12})
$$\int_{\cone}|z|^{p-n-sp} \dz\gtrsim \frac{\rm Vol(\cone)}{\rm Vol(B_{\delta_0|\abar|})}\int_{B_{\delta_0|\abar|}}|z|^{p-n-sp} \dz\gtrsim \frac{1}{p(1-s)} \eta_0^{\frac{n-1}{2}}(\delta_0|\abar|)^{p-sp}.$$
Readers may refer to the estimate (3.9) in \cite{BT25} for additional detail.

In case of $p\in (1, 2)$, we use \cite[Lemma~4.1]{BT25} and repeat the same argument as above.
\end{proof}

The following estimate of $I_2$ follows from \cite[Lemmas~3.2 and ~4.2]{BT25}.
\begin{lem}[Estimate of $I_2$]\label{L3.2}
Let $p\in (1, \infty)$ and $\delta=\varepsilon_1|\abar|$ for $\varepsilon_1\in (0, \nicefrac{1}{2})$. Then there exist $C, L_0$, independent of $\varepsilon_1, |\abar|$, such that
$$I_2 \geq - C L^{p-1} \varepsilon_1^{p(1-s)} (\varphi'(|\abar|))^{p-1} |\abar|^{p(1-s)-1},$$
where $\varphi$ is given by \eqref{varphi}. Moreover, if we set $\delta =\varepsilon_1 (\log^{2\rho}(|\abar|))^{-1}|\abar|$ with
$\rho=\frac{\frac{n+1}{2}+p-sp}{p-sp}$, and let $\varphi(t)=\tilde\varphi(\frac{r_0}{3}t)$ from Lemma~\ref{L3.1}(ii),
then we have $L_0, C>0$ satisfying
$$I_2 \geq - C L^{p-1} \varepsilon_1^{p(1-s)}  |\abar|^{p(1-s)-1} (\log^2(|\abar|))^{-\uptheta},$$
for all $L\geq L_0$.
\end{lem}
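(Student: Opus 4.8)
\textbf{Proof plan for the estimate of $I_2$ (Lemma~\ref{L3.2}).}

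The plan is to reduce everything to a pointwise comparison of the two integrands of $I_2$ on the region $\cD_1 = B_\delta \cap \cone^c$, exactly in the spirit of the proof of Lemma~\ref{L3.1}, and then integrate the resulting bound over $\cD_1$. First I would write $I_2 = \sL_p[\cD_1] w_1(\xbar) - \sL_p[\cD_1] w_2(\ybar)$ and expand $w_1(z) = \phi(z,\ybar)+\kappa_{\xbar}$, $w_2(z) = -\phi(\xbar,z)+\kappa_{\ybar}$ for $z$ in small balls around $\xbar$ and $\ybar$ respectively. On $\cD_1$ both test functions agree with $\phi$, so the increments $w_1(\xbar)-w_1(\xbar+z)$ and $w_2(\ybar)-w_2(\ybar+z)$ are smooth functions of $z$ with leading term $-\nabla_x\phi(\xbar,\ybar)\cdot z = -L\varphi'(|\abar|)\tfrac{\abar}{|\abar|}\cdot z + m_1\nabla\psi$ terms; crucially $\nabla_x\phi(\xbar,\ybar) = \nabla_y\phi(\xbar,\ybar)$ up to the localization contribution, so the difference $J_p(w_1(\xbar)-w_1(\xbar+z)) - J_p(w_2(\ybar)-w_2(\ybar+z))$ is controlled by the second-order Taylor remainders $\triangle^2\phi(\cdot,\ybar)(\xbar,z)$ and $\triangle^2\phi(\xbar,\cdot)(\ybar,z)$. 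Using Lemma~\ref{L2.4} to write $J_p(a)-J_p(b) = (p-1)\big(\int_0^1 |b+\tau(a-b)|^{p-2}\dif\tau\big)(a-b)$ together with the two-sided bound $\tfrac{1}{c_p}(|a|+|b|)^{p-2}\le \int_0^1|b+\tau(a-b)|^{p-2}\dif\tau\le c_p(|a|+|b|)^{p-2}$, and noting that on $\cD_1$ the quantities $|a|,|b|$ are comparable to $L\varphi'(|\abar|)|z|$ (this is where $\cone^c$ enters: the gradient term does not degenerate), I would obtain
\[
|I_2| \le C\,(L\varphi'(|\abar|))^{p-2} \int_{\cD_1} |z|^{p-2} \big(|\triangle^2\phi(\cdot,\ybar)(\xbar,z)| + |\triangle^2\phi(\xbar,\cdot)(\ybar,z)|\big)\frac{\dz}{|z|^{n+sp}}.
\]

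Next I would invoke the second-order estimates of \cite[Lemma~2.2]{BT25} (in the form already used to prove Lemma~\ref{L3.1}), which give $|\triangle^2\phi(\cdot,\ybar)(\xbar,z)| + |\triangle^2\phi(\xbar,\cdot)(\ybar,z)| \lesssim L\,|\varphi''(|\abar|)|\,|z|^2 + (\text{lower order localization terms})$ on $B_\delta$; since $\delta = \varepsilon_1|\abar| \ll |\abar|$, the relevant scale for the Hessian of $\varphi(|\cdot|)$ is $|\abar|$, and the localization part contributes a harmless $O(m_1)$ term absorbed into the constant for $L$ large. Plugging this in, the integral becomes $\int_{\cD_1}|z|^{p-n-sp}\dz \lesssim \int_{B_\delta}|z|^{p-n-sp}\dz = \tfrac{C}{p(1-s)}\delta^{p-sp}$, and one more factor of $L\varphi'(|\abar|)$ comes from estimating $|b|+|a| \lesssim L\varphi'(|\abar|)|z|$ being balanced against $|a-b|\lesssim L|\varphi''(|\abar|)||z|^2 \sim L\varphi'(|\abar|)|z|^2/|\abar|$ — so in total $|I_2| \lesssim (L\varphi'(|\abar|))^{p-2}\cdot L\varphi'(|\abar|)\cdot |\abar|^{-1}\cdot \delta^{p-sp} \cdot \delta^{-? }$. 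Tracking the powers of $\delta$ carefully: the extra $|z|^2/|\abar|$ in $|a-b|$ and one of the $|z|$'s combine so that one effectively integrates $|z|^{p-n-sp}$ over $B_\delta$, yielding $\delta^{p(1-s)} = \varepsilon_1^{p(1-s)}|\abar|^{p(1-s)}$, and the remaining $|\abar|^{-1}$ from $\varphi''$ produces the stated exponent $|\abar|^{p(1-s)-1}$. This gives precisely
\[
I_2 \ge -C L^{p-1}\varepsilon_1^{p(1-s)}(\varphi'(|\abar|))^{p-1}|\abar|^{p(1-s)-1}.
\]

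For the Lipschitz-profile refinement, I would repeat the identical computation with $\varphi(t) = \tilde\varphi(\tfrac{r_\circ}{3}t)$, using the bounds $\tfrac12 \le \tilde\varphi'(r) \le 1$ and $|\tilde\varphi''(r)| \le 2(r\log^2 r)^{-1}$ from Lemma~\ref{L3.1}(ii); the only change is that the $|\abar|^{-1}$ from $\varphi''$ is now accompanied by a factor $(\log^2|\abar|)^{-1}$, and $(\varphi'(|\abar|))^{p-1}$ is bounded by a constant. The particular choice $\delta = \varepsilon_1(\log^{2\rho}|\abar|)^{-1}|\abar|$ with $\rho = \frac{\frac{n+1}{2}+p-sp}{p-sp}$ is dictated by matching the logarithmic loss here against the logarithmic gain $(\log^2|\abar|)^{-\uptheta}$ with $\uptheta = \frac{n+1}{2}+p-sp$ in $I_1$ from Lemma~\ref{L3.1}(ii): indeed $\delta^{p-sp}$ carries $(\log^2|\abar|)^{-\rho(p-sp)} = (\log^2|\abar|)^{-(\frac{n+1}{2}+p-sp)} = (\log^2|\abar|)^{-\uptheta}$, which is exactly what is claimed. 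For $1<p<2$ one uses \cite[Lemma~4.2]{BT25} instead of \cite[Lemma~3.2]{BT25}, but the structure of the argument is unchanged. The main obstacle is the bookkeeping in the second step: one must keep the constants genuinely independent of $\varepsilon_1$ and $|\abar|$ (so the localization terms involving $m_1$ and $m$ really have to be absorbed by taking $L$ large, not by shrinking $\delta$), and one must verify that on $\cD_1$ the mean-value factor $\int_0^1|b+\tau(a-b)|^{p-2}\dif\tau$ is genuinely comparable to $(L\varphi'(|\abar|)|z|)^{p-2}$ — this is where excluding the cone $\cone$ is essential, since inside $\cone$ the first-order term can cancel and the $p-2 < 0$ (in the subquadratic case) power would blow up.
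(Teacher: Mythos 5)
Your overall power counting for $p\ge 2$ reproduces the right bound, but the sketch contains a genuine conceptual error at the point you yourself flag as essential. The cone $\cone=\{z\in B_{\delta_0|\abar|}:|\langle\abar,z\rangle|\ge(1-\eta_0)|\abar||z|\}$ consists of directions nearly \emph{parallel} to $\abar$, where the first-order term $L\varphi'(|\abar|)\,\hat\abar\cdot z$ is bounded \emph{below}; that is precisely where the coercive contribution $I_1$ of Lemma~\ref{L3.1} is harvested. Its complement $\cD_1=B_\delta\cap\cone^c$ contains the nearly orthogonal directions, where this linear term can vanish. So on $\cD_1$ the increments $|a|,|b|$ admit only an upper bound $\lesssim L\varphi'(|\abar|)|z|$, not the two-sided comparability you assert, and your claim that ``excluding the cone'' prevents the $(|a|+|b|)^{p-2}$ factor from blowing up when $p<2$ is backwards. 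Consequently your assertion that for $1<p<2$ ``the structure of the argument is unchanged'' is not justified: that case (in particular the singular range $p\le\frac{2}{2-s}$) is exactly where the mean-value bound degenerates and where \cite[Lemma~4.2]{BT25} has to argue differently. Note also that the paper itself does not prove this lemma at all; it simply quotes \cite[Lemmas~3.2 and~4.2]{BT25}, so a self-contained proof would have to supply these details rather than cite them back.

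Two further points need repair even in the superquadratic case. First, the localization term is not merely ``a harmless $O(m_1)$ term'': the first-order piece $m_1\nabla\psi(\xbar)\cdot z$ of $a-b$ produces, after taking absolute values, an integrand of order $(L\varphi'|z|)^{p-2}m_1|z|^{1-n-sp}$, which is not integrable over $B_\delta$ when $sp\ge p-1$ — exactly the regime relevant for the Lipschitz conclusion. One must first pair $z$ with $-z$ (using the symmetry of $\cD_1$ and oddness of $J_p$, i.e. the PV structure) so that only second-order differences $\triangle^2\phi$ survive, and only then estimate; your sketch takes absolute values too early. Second, for the Lipschitz profile the Hessian of $x\mapsto\varphi(|x-\ybar|)$ is dominated by the tangential part $\varphi'(|\abar|)/|\abar|\sim|\abar|^{-1}$, not by $\varphi''\sim(|\abar|\log^2|\abar|)^{-1}$, so there is no $(\log^2|\abar|)^{-1}$ gain from the second derivative; the logarithmic factor $(\log^2(|\abar|))^{-\uptheta}$ comes solely from the smaller choice of $\delta$, which you do identify correctly, so the final bound survives, but the reasoning as written overstates the smallness of the second-order increment on $\cD_1$.
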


Next we estimate $I_3$ and then $I_4$.
\begin{lem}[Estimate of $I_3$]\label{L3.3}
Suppose that $p> 2$ and $u\in C^{0, \upkappa}(\bar{B}_{\varrho_2})$ for some $\upkappa\in [0, \min\{\frac{sp}{p-2}, 1\})$. Let $\delta=\delta_0|\abar|$ where $\delta_0$
is chosen as in Lemma~\ref{L3.1}.
Then for any $\theta\in (0, 1)$, we have $L_0>0$ such that
$$I_3\geq -C \left[\int_\delta^{|\abar|^\theta} r^{\upkappa(p-2) + 1-sp} dr  + 
|\abar|^{\frac{m-1}{m}\upkappa} \int_\delta^{|\abar|^\theta} r^{\upkappa(p-2) -sp} dr + |\abar|^{\kappa +\theta(\upkappa(p-2)-sp)}\right]$$
for all $L\geq L_0$, where the constant $C$ depends on $\upkappa, p, s, n, m, m_1$ and the $C^{0, \upkappa}$ norm of $u$ in
$B_{\varrho_2}$.
\end{lem}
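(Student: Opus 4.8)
\textbf{Proof plan for Lemma~\ref{L3.3}.}

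The strategy is to decompose the integrals defining $\sL_p[\cD_2] w_1(\xbar)$ and $\sL_p[\cD_2] w_2(\ybar)$, and to control their difference $I_3$ from below. Recall that on $\cD_2$ the test functions $w_1$ and $w_2$ coincide with $u$ (since $\delta$ is much smaller than the scale of $\cD_2$, the balls $B_\delta(\xbar)$ and $B_\delta(\ybar)$ do not meet the region where the increments are evaluated by $z\in\cD_2$). So the integrand for $I_3$ is, up to the kernel $|z|^{-n-sp}$,
\[
J_p\bigl(u(\xbar)-u(\xbar+z)\bigr)-J_p\bigl(u(\ybar)-u(\ybar+z)\bigr),
\]
and the first task is to bound the absolute value of this difference. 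I would split $\cD_2$ at the radius $|\abar|^\theta$: for $|z|\le |\abar|^\theta$ (the "near" part) I want the pointwise bound $|J_p(a)-J_p(b)|\le C(|a|+|b|)^{p-2}|a-b|$ with $a=u(\xbar)-u(\xbar+z)$, $b=u(\ybar)-u(\ybar+z)$; by the $C^{0,\upkappa}$ hypothesis, $|a|,|b|\lesssim |z|^{\upkappa}$, and $|a-b|\le |u(\xbar)-u(\ybar)| + |u(\xbar+z)-u(\ybar+z)|$. The first summand in $|a-b|$ is bounded using $|u(\xbar)-u(\ybar)|\le 2A$ from \eqref{AB03} — but more usefully, exploiting that $\Phi(\xbar,\ybar)\ge\Phi(\xbar+z,\ybar+z)$ (the doubling inequality at the maximum point, comparing the competitor $(\xbar+z,\ybar+z)$), one gets that $u(\xbar)-u(\ybar) - (u(\xbar+z)-u(\ybar+z)) \ge -m_1(\psi(\xbar+z)-\psi(\xbar))$, i.e.\ $a - b$ has a one-sided control of order $m_1|\nabla\psi||z|\lesssim m_1|\abar|^{\frac{m-1}{m}}|z|$ (using $\psi$ vanishes to order $m$ and $\xbar$ is within distance $\sim|\abar|^{1/m}$ times a power of the $\psi$-value from $\partial B_{\varrho_1}$). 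Actually the cleaner route: use $|a-b|\le |z|^{\upkappa}\cdot 2 + $ oscillation term, but the point is to produce the three terms in the claimed bound — one from $|a-b|\lesssim |z|^\upkappa$ giving the $\int r^{\upkappa(p-2)+1-sp}dr$ (the extra $+1$ coming from the $dz$-to-$dr$ Jacobian $r^{n-1}$ against $|z|^{-n-sp}$, i.e.\ $r^{-1-sp}$, times $r^{\upkappa}\cdot r^{\upkappa(p-2)}$), one from the $\psi$-localization contributing the factor $|\abar|^{\frac{m-1}{m}\upkappa}$ against $\int r^{\upkappa(p-2)-sp}dr$, and the tail term $|\abar|^{\kappa+\theta(\upkappa(p-2)-sp)}$ from the "far" part $|z|\ge|\abar|^\theta$.

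For the far part $|z|\in(|\abar|^\theta,\tilde\varrho)$, I would bound $|J_p(a)-J_p(b)|\le |J_p(a)|+|J_p(b)|\lesssim |a|^{p-1}+|b|^{p-1}\lesssim |z|^{\upkappa(p-1)}$ — but this is too crude for $\upkappa$ near $1$; instead keep the structure $|J_p(a)-J_p(b)|\le C(|a|+|b|)^{p-2}|a-b|$ with $|a|+|b|\lesssim |z|^\upkappa$ and $|a-b|\lesssim |u(\xbar)-u(\ybar)| + |u(\xbar+z)-u(\ybar+z)|\lesssim |\abar|^\upkappa + |\abar|^\upkappa = |\abar|^\upkappa$ (using $C^{0,\upkappa}$ on the pair of points at distance $|\abar|$). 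This gives the integrand $\lesssim |\abar|^\upkappa |z|^{\upkappa(p-2)} |z|^{-n-sp}$, and integrating $|z|^{\upkappa(p-2)-sp-1}r^{n-1}$... wait, integrating against $dz$ over $|z|>|\abar|^\theta$ yields $|\abar|^\upkappa\cdot[r^{\upkappa(p-2)-sp}]$ evaluated, i.e.\ $\lesssim |\abar|^{\upkappa+\theta(\upkappa(p-2)-sp)}$ provided $\upkappa(p-2)-sp<0$, which holds exactly because $\upkappa<\frac{sp}{p-2}$ — this is where the hypothesis $\upkappa\in[0,\min\{\frac{sp}{p-2},1\})$ is used to guarantee the exponent is negative so the outer endpoint $|\abar|^\theta$ dominates. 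Collecting the near and far contributions, and noting $I_3\ge -(\text{sum of these bounds})$, gives exactly the claimed inequality. The requirement $L\ge L_0$ enters only through needing $|\abar|$ small enough (from \eqref{AB03}) that $\delta=\delta_0|\abar|<|\abar|^\theta<\tilde\varrho$ and the various Taylor/Hölder estimates are valid in the relevant balls.

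The main obstacle is organizing the near-part estimate so that precisely the three listed terms appear with the right powers — in particular correctly accounting for the $\psi$-localization term. The subtlety is that $|a-b|$ naturally splits as $|u(\xbar+z)-u(\ybar+z)| + |u(\xbar)-u(\ybar)|$, and one must use the doubling inequality $\Phi(\xbar+z,\ybar+z)\le\Phi(\xbar,\ybar)$ together with $C^{0,\upkappa}$ and the concavity/monotonicity of $\varphi$ to extract a bound of the form $|a-b|\lesssim |z|^\upkappa + m_1|\abar|^{\frac{m-1}{m}}|z|$ or, when $\upkappa<1$, just $|a-b|\lesssim |z|^\upkappa$ for the leading term plus the $\psi$-gradient term of size $\lesssim |\abar|^{\frac{m-1}{m}}|z|$; then splitting the product $(|a|+|b|)^{p-2}|a-b|\lesssim |z|^{\upkappa(p-2)}(|z|^\upkappa + |\abar|^{\frac{m-1}{m}}|z|)$. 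Hmm — this produces $\int r^{\upkappa(p-2)+\upkappa-sp-1}dr$ and $|\abar|^{\frac{m-1}{m}}\int r^{\upkappa(p-2)+1-sp-1}dr$, which after relabeling matches the first and a variant of the second term; reconciling the stated exponent $\frac{m-1}{m}\upkappa$ (rather than $\frac{m-1}{m}$) requires instead bounding $|u(\xbar)-u(\ybar)|\lesssim |\abar|^\upkappa$ directly and then using $\psi(\xbar)\lesssim|\abar|^{?}$ — i.e.\ one must be careful whether the localization enters through $|\abar|^{\frac{m-1}{m}\upkappa}$ via $\psi(\xbar)^{(m-1)/m}\lesssim|\abar|^{(m-1)\upkappa/m}$ bounding how far $\xbar$ sits from $\partial B_{\varrho_1}$; getting this bookkeeping exactly right, while keeping all constants independent of $L$, is the delicate part, and it closely parallels (but refines, via the $C^{0,\upkappa}$ input) the corresponding estimate in \cite[Lemmas~3.2 and~4.2]{BT25}.
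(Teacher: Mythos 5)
Your overall architecture is the same as the paper's: split $\cD_2$ at radius $\hat\delta=|\abar|^{\theta}$, handle the far part $|z|\ge|\abar|^{\theta}$ by $(|a|+|b|)^{p-2}|a-b|\lesssim |z|^{\upkappa(p-2)}|\abar|^{\upkappa}$ and integrate using $\upkappa(p-2)-sp<0$ (this part is correct and is exactly the paper's $I_{2,3}$, producing the term $|\abar|^{\upkappa+\theta(\upkappa(p-2)-sp)}$), and handle the near part by exploiting the maximum-point inequality $\Phi(\xbar+z,\ybar+z)\le\Phi(\xbar,\ybar)$, which gives $\triangle u(\xbar,z)-\triangle u(\ybar,z)\ge m_1\triangle\psi(\xbar,z)$. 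You correctly name this one-sided inequality as the key tool, and at the end you also point at the correct source of the localization factor, namely $m_1\psi(\xbar)\le u(\xbar)-u(\ybar)\le [u]_{\upkappa}|\abar|^{\upkappa}$ together with $|\nabla\psi|\lesssim \psi^{\frac{m-1}{m}}$, giving $|\nabla\psi(\xbar)|\lesssim|\abar|^{\frac{m-1}{m}\upkappa}$.

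However, the near-part estimate as you actually assemble it is wrong, and the error is not mere bookkeeping. You claim the first term $\int_{\delta}^{\hat\delta}r^{\upkappa(p-2)+1-sp}\,dr$ comes from the bound $|a-b|\lesssim|z|^{\upkappa}$; that bound yields the integrand $r^{\upkappa(p-1)-1-sp}$, not $r^{\upkappa(p-2)+1-sp}$ (the exponents differ by $\upkappa-2$), and the resulting integral is genuinely larger near $r=\delta\sim|\abar|$: for instance when $\upkappa(p-1)<sp$ it is of size $|\abar|^{\upkappa(p-1)-sp}$, which is not controlled by any of the three terms in the lemma and would destroy the absorption argument in Steps 1--4 of Theorem~\ref{T3.7}. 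The correct mechanism, which your sketch never carries out, is to use \emph{no} bound on $a-b$ in the near region at all: write $J_p(a)-J_p(b)=(p-1)\bigl(\int_0^1|b+t(a-b)|^{p-2}dt\bigr)(a-b)$, multiply the one-sided inequality $a-b\ge m_1\triangle\psi(\xbar,z)$ by the nonnegative prefactor (this is where monotonicity is essential --- you cannot combine the absolute-value inequality $|J_p(a)-J_p(b)|\le C(|a|+|b|)^{p-2}|a-b|$ with a one-sided control of $a-b$, as you do), bound the prefactor by $(|a|+|b|)^{p-2}\lesssim|z|^{\upkappa(p-2)}$ via Lemma~\ref{L2.4} and the $C^{0,\upkappa}$ bounds on $a,b$ \emph{individually}, and then use the second-order Taylor expansion $|\triangle\psi(\xbar,z)|\lesssim|z|^2+|\nabla\psi(\xbar)||z|$. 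It is the quadratic Taylor term $|z|^2$ (absent from your computation) that produces $r^{\upkappa(p-2)+1-sp}$, and the gradient term with $|\nabla\psi(\xbar)|\lesssim|\abar|^{\frac{m-1}{m}\upkappa}$ (not $|\abar|^{\frac{m-1}{m}}$, which you use in the main text and which is not justified) that produces the second term. As written, your proposal proves only a weaker inequality than the one stated, so the lemma is not established.
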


\begin{proof}
Fix $\theta\in (0, 1)$. From \eqref{AB03} we see that $|\abar|\to 0$ as $L\to\infty$. Therefore, we would have $\delta<|\abar|^\theta<\tilde\varrho$ for all
$L$ large enough. We denote $\hat\delta=|\abar|^\theta$ and write
$$
I_3=\underbrace{\sL_p[\cD_2\cap B_{\hat\delta}] w_1(\xbar)-\sL_p[\cD_2\cap B_{\hat\delta}] w_2(\ybar)}_{I_{1, 3}} +
\underbrace{\sL_p[\cD_2\cap B^c_{\hat\delta}] w_1(\xbar)-\sL_p[\cD_2\cap B^c_{\hat\delta}] w_2(\ybar)}_{I_{2, 3}}.
$$
Using the notation $\triangle g(x, z)= g(x)-g(x+z)$ and fundamental theorem of calculus we see that
$$I_{1,3}=(p-1)\int_{\cD_2\cap B_{\hat\delta}}\int_0^t |\triangle u(\ybar, z) + t (\triangle u(\xbar, z)-\triangle u(\ybar, z))|^{p-2}
(\triangle u(\xbar, z)-\triangle u(\ybar, z))\, dt \frac{dz}{|z|^{n+sp}}.$$
Now $\Phi(\xbar+z,\ybar+z)\leq \Phi(\xbar,\ybar)$ gives us
$$\triangle u(\xbar, z) - \triangle u(\ybar, z)\geq m_1 \triangle \psi(\xbar, z),$$
leading to
\begin{equation}\label{EL3.3A}
I_{1,3}\geq -\kappa\, m_1 \int_{\cD_2\cap B_{\hat\delta}} (|\triangle u(\ybar, z)| +  |\triangle u(\xbar, z)|)^{p-2} |\triangle \psi(\xbar, z)|\frac{dz}{|z|^{n+sp}}
\end{equation}
for some constant $\kappa$, where we also used Lemma~\ref{L2.4}. Note that $\xbar+z, \ybar+z\in B_{\varrho_2}$ for all $z\in B_{\tilde\varrho}$.
Therefore,
$$|\triangle u(\ybar, z)| +  |\triangle u(\xbar, z)|\leq 2 [u]_{\upkappa, \varrho_2} |z|^\upkappa$$
where $[u]_{\upkappa, \varrho_2}$ denotes the $C^{0, \upkappa}$ seminorm in $B_{\varrho_2}$. Again, from the Taylor's expansion
of $\psi$ we also get
$$|\triangle \psi(\xbar, z)|\leq \kappa_2 (|z|^2 + |\nabla\psi(\xbar)||z|)$$
for some constant $\kappa_2$, dependent on $m$. Putting these estimates in \eqref{EL3.3A} we arrive at
\begin{align}\label{EL3.3B}
I_{1,3} &\geq -\kappa_3 \int_{B^c_{\delta}\cap B_{\hat\delta}} (|z|^{\upkappa(p-2) + 2} + |\nabla\psi(\xbar)| |z|^{\upkappa(p-2) + 1}) \frac{dz}{|z|^{n+sp}}
\nonumber
\\
& =-\kappa_4 \int_\delta^{\hat\delta} (r^{\upkappa(p-2) + 2} + |\nabla\psi(\xbar)| r^{\upkappa(p-2) + 1}) r^{-1-sp} dr
\end{align}
for some constants $\kappa_3, \kappa_4$, dependent on $[u]_{\upkappa, \varrho_2}, m, m_1, n$. From \eqref{E2.3} we have
$$\psi(\xbar)\leq \frac{1}{m_1} (u(\xbar)-u(\ybar))\leq \frac{1}{m_1} [u]_{\upkappa, \varrho_2} |\abar|^\upkappa.$$
Thus
$$|\nabla\psi(\xbar)|\leq 2m (\psi(\xbar))^{\frac{m-1}{m}}\leq \kappa_5 |\abar|^{\frac{m-1}{m}\upkappa}$$
for some constant $\kappa_5$. Hence from \eqref{EL3.3B} we obtain
\begin{equation}\label{EL3.3C}
I_{1,3} \geq -C \left[ \int_\delta^{\hat\delta} r^{\upkappa(p-2) + 1-sp} dr  + 
|\abar|^{\frac{m-1}{m}\upkappa} \int_\delta^{\hat\delta} r^{\upkappa(p-2) -sp} dr\right].
\end{equation}
To compute $I_{2,3}$ we first note, using Lemma~\ref{L2.4}, that
\begin{align*}
I_{2,3} &\geq -\kappa_p \int_{\cD_2\cap B^c_{\hat\delta}} (|\triangle u(\ybar, z)|  + | \triangle u(\xbar, z)|)^{p-2}
|\triangle u(\xbar, z)-\triangle u(\ybar, z)| \frac{dz}{|z|^{n+sp}}
\\
&\geq -\kappa_6 \int_{\cD_2\cap B^c_{\hat\delta}} |z|^{\upkappa(p-2)} |\abar|^{\upkappa} \frac{dz}{|z|^{n+sp}}
\\
&\geq -\kappa_7 |\abar|^{\kappa} \int_{\hat\delta}^{\tilde\rho} r^{\upkappa(p-2)-sp-1} dr
\\
&\geq - \frac{\kappa_7}{sp-\upkappa(p-2)} |\abar|^{\kappa} (\hat\delta)^{\upkappa(p-2)-sp}=
- \frac{\kappa_7}{sp-\upkappa(p-2)} |\abar|^{\kappa +\theta(\upkappa(p-2)-sp)} 
\end{align*}
for some constants $\kappa_6, \kappa_7$. The proof follows by combining the above estimate with \eqref{EL3.3C}.
\end{proof}

\begin{lem}[Estimate of $I_4$ and $J_4$]\label{L3.4}
Let $p, q\in (1, \infty)$. Suppose that $u\in C^{0, \upkappa}(B_{\varrho_2})$ for some $\upkappa\in [0, 1]$ and 
$\xi$ is $\alpha$-H\"{o}lder continuous. Then
there is a constant $L_0$ such that
$$|I_4|\leq C \max\{|\abar|^\upkappa, |\abar|^{\upkappa(p-1)}\} \quad \text{and}\quad
 |J_4|\leq C \max\{|\abar|^\alpha, |\abar|^{\upkappa}, |\abar|^{\upkappa(q-1)}\}$$
for all $L\geq L_0$, where the constant $C$ depends on $\tilde\varrho, A$, $\sup_z\norm{\xi(\cdot, z)}_{C^{0, \alpha}}$ and the $C^{0, \upkappa}$ norm of $u$ in $B_{\varrho_2}$.
\end{lem}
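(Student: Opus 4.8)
\emph{Proof strategy.} The plan is to turn $I_4$ and $J_4$ into explicit (non principal-value) integrals, neutralize the far tail by a change of variables, and then estimate by telescoping. First, since $\delta,\tilde\delta<\tilde\varrho$, for every $z\in B^c_{\tilde\varrho}$ all four test functions agree with $u$ at the relevant points: $w_1(\xbar)=\tilde w_1(\xbar)=u(\xbar)$, $w_1(\xbar+z)=\tilde w_1(\xbar+z)=u(\xbar+z)$, and likewise at $\ybar$. The integrands have no singularity at the origin, so the principal values are genuine integrals and
\[
I_4=\int_{|z|\ge\tilde\varrho}\big(J_p(u(\xbar)-u(\xbar+z))-J_p(u(\ybar)-u(\ybar+z))\big)\frac{dz}{|z|^{n+sp}},
\]
\[
J_4=\int_{|z|\ge\tilde\varrho}\big(J_q(u(\xbar)-u(\xbar+z))\,\xi(\xbar,z)-J_q(u(\ybar)-u(\ybar+z))\,\xi(\ybar,z)\big)\frac{dz}{|z|^{n+tq}}.
\]
The obstruction is that $u$ is controlled in $C^{0,\upkappa}$ only on $B_{\varrho_2}$, so $u(\xbar+z)-u(\ybar+z)$ need not be small once $\xbar+z,\ybar+z$ leave $B_{\varrho_2}$. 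To bypass this I would substitute $\zeta=\xbar+z$ in the integral based at $\xbar$ and $\zeta=\ybar+z$ in the one based at $\ybar$, which yields
\[
I_4=\int_{|\zeta-\xbar|\ge\tilde\varrho}\frac{J_p(u(\xbar)-u(\zeta))}{|\zeta-\xbar|^{n+sp}}\,d\zeta-\int_{|\zeta-\ybar|\ge\tilde\varrho}\frac{J_p(u(\ybar)-u(\zeta))}{|\zeta-\ybar|^{n+sp}}\,d\zeta
\]
and the analogous identity for $J_4$.

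Next I would split the domains. On the common part $\mathcal O:=\{|\zeta-\xbar|\ge\tilde\varrho\}\cap\{|\zeta-\ybar|\ge\tilde\varrho\}$ the two integrands are subtracted pointwise, while the complementary symmetric difference lies in the shell $\{\zeta:\;\big|\,|\zeta-\xbar|-\tilde\varrho\,\big|\le|\abar|\}$; for $L$ large this shell has measure $\le C|\abar|$ and, since $\xbar,\ybar$ remain in a fixed compact subset of $B_2$ while $\tilde\varrho$ is fixed, it lies in $B_2$, where $|u|\le A$, so it contributes at most $C|\abar|$ to both $I_4$ and $J_4$. On $\mathcal O$ I telescope
\begin{multline*}
\frac{J_p(u(\xbar)-u(\zeta))}{|\zeta-\xbar|^{n+sp}}-\frac{J_p(u(\ybar)-u(\zeta))}{|\zeta-\ybar|^{n+sp}}\\
=\frac{J_p(u(\xbar)-u(\zeta))-J_p(u(\ybar)-u(\zeta))}{|\zeta-\xbar|^{n+sp}}+J_p(u(\ybar)-u(\zeta))\Big(\frac{1}{|\zeta-\xbar|^{n+sp}}-\frac{1}{|\zeta-\ybar|^{n+sp}}\Big).
\end{multline*}
In the first term the two arguments of $J_p$ differ only by $u(\xbar)-u(\ybar)$, and $|u(\xbar)-u(\ybar)|\le[u]_{\upkappa,\varrho_2}|\abar|^\upkappa$ because $\xbar,\ybar\in B_{\varrho_2}$; combining the identity $J_p(a)-J_p(b)=(p-1)(a-b)\int_0^1|b+\tau(a-b)|^{p-2}\,d\tau$ with Lemma~\ref{L2.4} gives $|J_p(a)-J_p(b)|\le C(|a|+|b|)^{p-2}|a-b|$ for $p\ge2$ and $|J_p(a)-J_p(b)|\le C|a-b|^{p-1}$ for $1<p<2$, and since $(|a|+|b|)^{p-2}\le C(1+|u(\zeta)|^{p-1})$ with $\int_{|\zeta-\xbar|\ge\tilde\varrho}(1+|u(\zeta)|^{p-1})|\zeta-\xbar|^{-n-sp}\,d\zeta<\infty$ (finite by $u\in L^{p-1}_{sp}(\Rn)$ and $\sup_{B_2}|u|\le A$, after comparing $|\zeta-\xbar|$ with $|\zeta|$) this term is $\le C\max\{|\abar|^\upkappa,|\abar|^{\upkappa(p-1)}\}$. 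In the second term the mean value theorem — valid because $|\zeta-\xbar|,|\zeta-\ybar|\ge\tilde\varrho$ and $|\abar|$ is small — gives $\big|\,|\zeta-\xbar|^{-n-sp}-|\zeta-\ybar|^{-n-sp}\,\big|\le C|\abar|\,|\zeta-\xbar|^{-n-sp-1}$, while $|J_p(u(\ybar)-u(\zeta))|\le C(1+|u(\zeta)|^{p-1})$, so it is $\le C|\abar|$. Adding these to the shell bound and using $|\abar|\le|\abar|^\upkappa$ (valid for $0<\upkappa\le1$, trivial for $\upkappa=0$) gives the asserted estimate for $|I_4|$.

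The estimate for $J_4$ runs through the same scheme with $(s,p)$ replaced by $(t,q)$; the only new feature is a third term in the telescoping on $\mathcal O$, namely $J_q(u(\ybar)-u(\zeta))\big(\xi(\xbar,\zeta-\xbar)-\xi(\ybar,\zeta-\ybar)\big)|\zeta-\xbar|^{-n-tq}$. Here $|\xi(\xbar,\zeta-\xbar)-\xi(\ybar,\zeta-\ybar)|\le C(|\xbar-\ybar|^\alpha+|\abar|^\alpha)\le C|\abar|^\alpha$ by the $\alpha$-H\"older continuity of $\xi$, and $|J_q(u(\ybar)-u(\zeta))|\le C(1+|u(\zeta)|^{q-1})$ integrates against $|\zeta-\xbar|^{-n-tq}$ to a finite constant (using $u\in L^{q-1}_{tq}(\Rn)$), so this term is $\le C|\abar|^\alpha$; the $J_q$-difference term, carrying the bounded factor $\xi\le M$, contributes $\le C\max\{|\abar|^\upkappa,|\abar|^{\upkappa(q-1)}\}$ exactly as above, and the kernel-difference term contributes $\le C|\abar|$. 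Collecting all contributions yields $|J_4|\le C\max\{|\abar|^\alpha,|\abar|^\upkappa,|\abar|^{\upkappa(q-1)}\}$, with $C$ depending only on the quantities listed in the statement. The step needing the most care — and the only genuinely non-mechanical one — is the change of variables followed by the separate treatment of the two mismatched integration domains: this is precisely what replaces the missing pointwise H\"older control of $u$ outside $B_{\varrho_2}$ by the integrable tail bounds, and it is where the smallness of $|\abar|$ (that is, $L\ge L_0$) is used, to ensure the symmetric difference is a thin shell contained in $B_2$.
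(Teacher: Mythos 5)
Your argument is correct and follows essentially the same route as the paper: rewrite $I_4,J_4$ in the absolute variable, split off the thin-shell domain mismatch (giving $C|\abar|$), bound the kernel/$\xi$-difference piece by $C|\abar|$ resp.\ $C|\abar|^\alpha$, and control the $J_p$/$J_q$-difference piece via $|u(\xbar)-u(\ybar)|\le C|\abar|^\upkappa$ together with the tail integrability, which is exactly the paper's decomposition into $\cJ_1,\cJ_2,\cJ_3$. The only cosmetic deviation is that you absorb the factor $(|u(\zeta)|+1)^{q-2}$ via $x^{q-2}\le 1+x^{q-1}$ where the paper uses H\"older's inequality against the tail; both are fine.
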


\begin{proof}
We only provide a proof for $J_4$ as the proof for $I_4$ would follow in a similar fashion. Denote $\zeta(x, z)=\xi(x, z)|z|^{-n-tq}$.
Then estimate
\begin{align}\label{EL3.4A}
|J_5| &\leq \underbrace{ \left| \int_{|\xbar-z|\geq \tilde\varrho} J_q(u(\xbar)-u(z)) |\zeta(\xbar, \xbar-z)-\zeta(\ybar, \ybar-z)| \dz\right|}_{=\cJ_1}\nonumber
\\
&\quad + \underbrace{\left|\int_{|\xbar-z|\geq \tilde\varrho} J_q(u(\xbar)-u(z)) \zeta(\ybar, \ybar-z) \dz- \int_{|\ybar-z|\geq \tilde\varrho} J_q(u(\xbar)-u(z)) \zeta(\ybar, \ybar-z) \dz\right|}_{=\cJ_2}\nonumber
\\
&\qquad + \underbrace{\left|\int_{|\ybar-z|\geq \tilde\varrho} (J_q(u(\xbar)-u(z)) - J_q(u(\ybar)-u(z)))\zeta(\ybar, \ybar-z) \dz\right|}_{=\cJ_3}.
\end{align}
Set $L_0$ large enough using \eqref{AB03}, so that $|\abar|<\frac{\tilde\varrho}{2}$. Thus, we get 
$|\xbar-z|\geq \varrho\Rightarrow |\ybar-z|\geq \frac{\tilde\varrho}{2}$. Therefore, using
the H\"{o}lder continuity of $\xi$, we obtain for $|\xbar-z|\geq\frac{\tilde\varrho}{2}$ that
\begin{align*}
|\zeta(\xbar, \xbar-z)-\zeta(\ybar, \ybar-z)| &\leq |z-\xbar|^{-n-tq} |\xi(\xbar, \xbar-z)-\xi(\ybar, \ybar-z)|+ \xi(\ybar, \ybar-z) \left||z-\xbar|^{-n-tq}-|z-\ybar|^{-n-tq}\right|
\\
&\leq \kappa \left(|z-\xbar|^{-n-tq} |\abar|^{\alpha} + |z-\xbar|^{-n-1-tq}|\abar|\right)
\\
&\leq \kappa_1 |\abar|^{\alpha} |z-\xbar|^{-n-tq}
\end{align*}
for some constants $\kappa, \kappa_1$. Using the finiteness of $A$, it follows that $\cJ_1\leq C |\abar|^\alpha$.
To estimate $\cJ_2$, we first note that we only need to calculate the integration on the set  
\begin{align*}
&(\{|\xbar-z|\geq \tilde\varrho\}\setminus\{|\ybar-z|\geq \tilde\varrho\})\cup (\{|\ybar-z|\geq \tilde\varrho\}\setminus\{|\xbar-z|\geq \tilde\varrho\})
\\
&\quad = (\{|\xbar-z|\leq \tilde\varrho\}\setminus\{|\ybar-z|\leq \tilde\varrho\})\cup (\{|\ybar-z|\leq \tilde\varrho\}\setminus\{|\xbar-z|\leq \tilde\varrho\}).
\end{align*}
Since $\sup_{B_2}|u|\leq 1$, it follows that $\cJ_2\leq C |\abar|$. To compute $\cJ_3$, we see that, for $q> 2$, 
$$
|J_q(u(\xbar)-u(z)) - J_q(u(\ybar)-u(z))|
\leq \kappa_2 (|u(z)|+1)^{q-2} |u(\xbar)-u(\ybar)|\leq \kappa_3(|u(z)|+1)^{q-2} |\abar|^{\upkappa}
$$
for some constants $\kappa_2, \kappa_3$. Hence, by our assumption on $\tail_{t,q}(u;0,2)$, we obtain
\begin{align*}
\cJ_3 &\leq \kappa_3 M |\abar|^{\upkappa}\int_{|\ybar-z|\geq \tilde\varrho}\frac{(|u(z)|+1)^{q-2}}{|z-\ybar|^{n+tq}}\dz
\\
&\leq \kappa_3 M |\abar|^{\upkappa} \left[ \int_{|\ybar-z|\geq \tilde\varrho} \frac{(|u(z)|+1)^{q-1}}{|z-\ybar|^{n+tq}}\dz \right]^{\frac{q-2}{q-1}}
	\left[ \int_{|\ybar-z|\geq \tilde\varrho} \frac{1}{|z-\ybar|^{n+tq}}\dz \right]^{\frac{1}{q-1}}
\\
&\leq \kappa_4 |\abar|^{\upkappa}
\end{align*}
for some constant $\kappa_4$. For $q\in (1,2]$, we use 
$$
|J_q(u(\xbar)-u(z)) - J_q(u(\ybar)-u(z))|
\leq 2 |u(\xbar)-u(\ybar)|^{q-1} \leq \kappa |\abar|^{\upkappa(q-1)},
$$
giving us $\cJ_3\leq \kappa_1 |\abar|^{\upkappa(q-1)}$. 
Plug-in these estimates in \eqref{EL3.4A} we have
\begin{equation*}
|J_5|\leq C \max\{|\abar|^{\alpha}, |\abar|^{\upkappa}, |\abar|^{\upkappa(q-1)}\}.
\end{equation*}
This completes the proof.
\end{proof}

Estimate of $J_1$ is similar to $I_2$ in Lemma~\ref{L3.2}
\begin{lem}[Estimate of $J_1$]\label{L3.5}
Let $q\in (1, \infty), p\leq q, tq\leq sp$ and $\tilde\delta=\varepsilon_1|\abar|$ for $\varepsilon_1\in (0, \nicefrac{1}{2})$. Then there exist $C, L_0$, independent of $\varepsilon_1, |\abar|$, such that
$$J_1 \geq - C L^{p-1} \varepsilon_1^{q(1-t)}  |\abar|^{\gamma(p-1)-sp},$$
where $\varphi=\varphi_\gamma$ is given by \eqref{varphi}.  Moreover, if we set $\tilde\delta=\varepsilon_1 (\log^{2\tilde\rho}(|\abar|))^{-1}|\abar|$ with
$\tilde\rho=\frac{\frac{n+1}{2}+q-tq}{q-tq}$, and let $\varphi(r)=\tilde\varphi(\frac{r_0}{3}r)$ from Lemma~\ref{L3.1}(ii),
then we have $L_0, C>0$ satisfying
$$J_1 \geq - C L^{p-1} \varepsilon_1^{q(1-t)}  |\abar|^{p(1-s)-1} (\log^2(|\abar|))^{-\uptheta},$$
for all $L\geq L_0$, where $\uptheta=\frac{n+1}{2}+p-sp$.
\end{lem}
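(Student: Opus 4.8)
The plan is to follow the scheme used for $I_2$ in Lemma~\ref{L3.2} (i.e.\ \cite[Lemmas~3.2 and~4.2]{BT25}), transcribed to the $q$-operator, the weight $\xi$, and the constraint $tq\le sp$. Write $r=|\abar|$. On $B_{\tilde\delta}(\xbar)$ and $B_{\tilde\delta}(\ybar)$ the test functions are explicit, so with $\mathbf p_1:=\nabla_x\phi(\xbar,\ybar)=L\varphi'(r)\frac{\abar}{r}+m_1\nabla\psi(\xbar)$ and $\mathbf p_2:=L\varphi'(r)\frac{\abar}{r}$ (both $\neq0$ for $L$ large) one has, for $|z|<\tilde\delta$,
\[
a(z):=\tilde w_1(\xbar)-\tilde w_1(\xbar+z)=L(\varphi(r)-\varphi(|\abar+z|))+m_1\triangle\psi(\xbar,z),\qquad b(z):=\tilde w_2(\ybar)-\tilde w_2(\ybar+z)=L(\varphi(|\abar-z|)-\varphi(r)),
\]
and, since $\sL_q$ carries the weight $\xi(\cdot,z)$, from \eqref{E2.7}
\[
J_1=\mathrm{PV}\!\int_{B_{\tilde\delta}}(J_q(a)-J_q(b))\,\xi(\xbar,z)\frac{dz}{|z|^{n+tq}}+\mathrm{PV}\!\int_{B_{\tilde\delta}}J_q(b)\,(\xi(\xbar,z)-\xi(\ybar,z))\frac{dz}{|z|^{n+tq}}.
\]

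The key point is that $\xi$ is even in its second argument, so $z\mapsto J_q(-\mathbf p_1\cdot z)\,\xi(\xbar,z)|z|^{-n-tq}$ and $z\mapsto J_q(-\mathbf p_2\cdot z)\,\xi(\xbar,z)|z|^{-n-tq}$ are odd and have vanishing principal value over the symmetric ball $B_{\tilde\delta}$; subtracting them replaces each increment of $J_q$ by a second-order Taylor remainder and turns every principal value into an absolutely convergent integral — exactly what is needed since $q(1-t)$ may be $\le1$. Because $\tilde\delta\le\varepsilon_1 r\ll r$, Taylor expansion of $\varphi(|\abar\pm z|)$ about $r$ (using $|\varphi''(r)|\,r\le C\varphi'(r)$ for both profiles of \eqref{varphi}) and of $\psi$ about $\xbar$ give $|a|+|b|+|\mathbf p_1\cdot z|+|\mathbf p_2\cdot z|\le CL\varphi'(r)|z|$ and $|a+\mathbf p_1\cdot z|+|b+\mathbf p_2\cdot z|\le C(\frac{L\varphi'(r)}{r}+m_1)|z|^2\le C\frac{L\varphi'(r)}{r}|z|^2$ for $L\ge L_0$. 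Hence Lemma~\ref{L2.4} bounds the remainders by $C(L\varphi'(r)|z|)^{q-2}\cdot\frac{L\varphi'(r)}{r}|z|^2=C\frac{(L\varphi'(r))^{q-1}}{r}|z|^q$ in the superquadratic regime $q\ge p>2$ of this section (when $q<2$ one instead bounds the factor produced by Lemma~\ref{L2.4} by $|\mathbf p_i\cdot z|^{q-2}$ and absorbs the integrable angular singularity $|\cos\angle(\abar,z)|^{q-2}$, $q>1$, exactly as for $I_2$ in the subquadratic case). Using $0\le\xi\le M$, the $\alpha$-Hölder bound $|\xi(\xbar,z)-\xi(\ybar,z)|\le C|\abar|^\alpha\le C$, and $\int_{B_{\tilde\delta}}|z|^{q-n-tq}\,dz=\frac{\omega_{n-1}}{q(1-t)}\tilde\delta^{q(1-t)}$ (finite as $t<1$), integration yields
\[
|J_1|\le C\,\frac{(L\varphi'(r))^{q-1}}{r}\,\tilde\delta^{q(1-t)},
\]
with $C$ independent of $\varepsilon_1$ and $|\abar|$.

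It then remains to insert the two profiles and reduce exponents. For $\varphi=\varphi_\gamma$ and $\tilde\delta=\varepsilon_1 r$, $\varphi'(r)=\gamma r^{\gamma-1}$ gives $|J_1|\le C\varepsilon_1^{q(1-t)}L^{q-1}r^{\gamma(q-1)-tq}$; writing this as $(Lr^\gamma)^{q-p}\cdot C\varepsilon_1^{q(1-t)}L^{p-1}r^{\gamma(p-1)-tq}$, using $Lr^\gamma=L\varphi_\gamma(|\abar|)\le2A$ from \eqref{AB03}, and then $r\le1$ with $sp-tq\ge0$ to pass from $r^{\gamma(p-1)-tq}$ to $r^{\gamma(p-1)-sp}$, gives $J_1\ge-CL^{p-1}\varepsilon_1^{q(1-t)}|\abar|^{\gamma(p-1)-sp}$. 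For the Lipschitz profile $\varphi(t)=\tilde\varphi(\frac{r_0}{3}t)$ one has $\varphi'(r)\asymp1$ (up to the fixed factor $r_0$), and with $\tilde\delta=\varepsilon_1(\log^{2\tilde\rho}(|\abar|))^{-1}|\abar|$, $\tilde\rho=\frac{\frac{n+1}{2}+q-tq}{q-tq}$, the identity $\tilde\rho\,q(1-t)=\frac{n+1}{2}+q(1-t)$ gives $\tilde\delta^{q(1-t)}=\varepsilon_1^{q(1-t)}(\log^2(|\abar|))^{-(\frac{n+1}{2}+q(1-t))}|\abar|^{q(1-t)}$; the same bookkeeping — now with $L\tilde\varphi(\frac{r_0}{3}|\abar|)\le2A$, $q(1-t)-p(1-s)=(q-p)+(sp-tq)\ge0$, and $|\log|\abar||\ge1$ — reduces the bound to $CL^{p-1}\varepsilon_1^{q(1-t)}|\abar|^{p(1-s)-1}(\log^2(|\abar|))^{-\uptheta}$ with $\uptheta=\frac{n+1}{2}+p-sp$, as claimed. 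The main obstacle is organizing the principal-value cancellation of the odd gradient contributions robustly enough to remain valid when $q(1-t)\le1$ (and, if the statement is wanted for all $q>1$, the matching subquadratic angular estimate); the rest is Taylor expansion plus careful tracking of the powers of $L$, $|\abar|$ and $\varepsilon_1$, the constraints $tq\le sp$ and $q\ge p$ being exactly what lets one trade the $q$-exponents for the $p$-exponents.
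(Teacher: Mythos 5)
Your proposal is correct and follows essentially the same route as the paper: the paper obtains the intermediate bound $J_1\geq -\kappa\,\tilde\delta_0^{\,q(1-t)}(L\varphi'(|\abar|))^{q-1}|\abar|^{q(1-t)-1}$ by invoking the arguments of \cite[Lemmas~3.2 and 4.2]{BT25} (adapted to the weighted $q$-kernel, exactly the PV-cancellation/Taylor/Lemma~\ref{L2.4} computation you sketch, including the subquadratic angular estimate for $1<q<2$), and then performs the same exponent bookkeeping you do, trading $L^{q-p}$ via \eqref{AB03}, using $|\abar|<1$ with $sp\geq tq$, $q\geq p$, and $\tilde\uptheta\geq\uptheta$ for the logarithmic profile. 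The only cosmetic difference is that you re-derive the core estimate with the weight $\xi$ explicitly (using only $0\leq\xi\leq M$ and the evenness of $\xi(x,\cdot)$) rather than citing it, which does not change the substance of the argument.
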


\begin{proof}
Following the proofs of \cite[Lemma~3.2 and ~4.2]{BT25} we see that for $\tilde\delta=\tilde\delta_0 |\abar|$ with $\tilde\delta_0\in (0, \nicefrac{1}{2})$, we have
\begin{equation}\label{EL3.5A}
J_1\geq - \kappa (\tilde\delta_0)^{q-tq} (L\varphi'(|\abar|))^{q-1} |\abar|^{q(1-t)-1}
\end{equation}
for a constant $\kappa$, not depending on $|\abar|, L$ and $\tilde\delta_0$.  Now suppose that $\varphi=\varphi_\gamma$ and $\tilde\delta_0=\varepsilon_1$. Then
$$(\varphi'(|\abar|))^{q-1} |\abar|^{q(1-t)-1}=\gamma^{q-1} |\abar|^{\gamma(q-1) -tq}.$$
On the other have, \eqref{AB03} implies $L\leq 2A |\abar|^{-\gamma}\Rightarrow L^{q-p}\leq (2A |\abar|^{-\gamma})^{q-p}.$
Plugin this information in \eqref{EL3.5A} gives us 
$$J_1\geq -\kappa_3 L^{p-1} \varepsilon_1^{q(1-t)} |\abar|^{\gamma(p-q)} |\abar|^{\gamma(q-1) -tq}\geq -\kappa_3 L^{p-1} \varepsilon_1^{q(1-t)} |\abar|^{\gamma(p-1)-sp},$$
where in the last inequality we use that fact that $|\abar|\in (0, 1)$ and $sp\geq tq$. This gives the first inequality.

For the second estimate, observe that $\frac{1}{2}\leq \varphi'(|\abar|)\leq 1$, by our choice in Lemma~\ref{L3.1}. Again $\varphi(|\abar|)\geq \kappa |\abar|$ gives us
$L\leq \kappa_4 |\abar|^{-1}$ from \eqref{AB03}. Thus, setting $\tilde\delta_0=\varepsilon_1 (\log^{2\tilde\rho}(|\abar|))^{-1}$, we get from \eqref{EL3.5A} that
$$J_1\geq -\kappa_5 L^{p-1} \varepsilon_1^{q-tq} |\abar|^{p-q} |\abar|^{q(1-t)-1} (\log^2(|\abar|))^{-\tilde\uptheta}\geq 
-\kappa_5 L^{p-1} \varepsilon_1^{q-tq} |\abar|^{p(1-s)-1} (\log^2(|\abar|))^{-\tilde\uptheta},$$
where $\tilde\uptheta=\frac{n+1}{2}+q-tq$. By our assumption, we have $\tilde\uptheta\geq \uptheta$. Hence the desired estimate follows.
\end{proof}

Next we estimate $J_2$ and $J_3$.
\begin{lem}[Estimate of $J_2$ and $J_3$]\label{L3.6}
Let $q>2$ and $u\in C^{0, \upkappa}(\bar{B}_{\varrho_2})$ for some $\upkappa\in [0, \min\{\frac{sp}{p-2}, 1\})$. Let $\tilde\delta$ be given by Lemma~\ref{L3.5}.
Then for any $\theta\in (0, 1)$ we have we have $L_0>0$ such that
$$J_2\geq -C \left[\int_{\tilde\delta}^{|\abar|^\theta} r^{\upkappa(q-2) + 1- tq} dr  + 
|\abar|^{\frac{m-1}{m}\upkappa} \int_{\tilde\delta}^{|\abar|^\theta} r^{\upkappa(q-2) -tq} dr + |\abar|^{\kappa +\theta(\upkappa(p-2)-sp)}\right]$$
for all $L\geq L_0$, where the constant $C$ depends on $\upkappa, M, p, s, n, m, m_1$ and the $C^{0, \upkappa}$ norm of $u$ in
$B_{\varrho_2}$. Moreover, if $\xi$ is $\alpha$-H\"{o}lder continuous in the first argument uniformly with respect to the second, then
$$J_3\geq - C|\abar|^\alpha \int_{\tilde\delta}^{\tilde\varrho} r^{\upkappa(q-1)-1-tq} dr,$$
where the constant $C$ depends on $\sup_z\norm{\xi(\cdot, z)}_{C^{0, \alpha}}$ and the $C^{0, \upkappa}$ norm of $u$ in
$B_{\varrho_2}$.
\end{lem}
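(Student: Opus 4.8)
\emph{Proof proposal.} The plan is to mimic the proof of Lemma~\ref{L3.3} (the estimate of $I_3$), simply replacing the exponents $(p,sp)$ of the fractional $p$-Laplacian by $(q,tq)$ for the $\xi$-part and carrying along the harmless factor $M$ coming from $0\le\xi\le M$. A few facts about the geometry will be used throughout: on $\tilde{\cD}_2=B_{\tilde\varrho}\setminus B_{\tilde\delta}$ the modified test functions coincide with $u$ away from the small balls, so $\tilde{w}_1(\xbar)-\tilde{w}_1(\xbar+z)=\triangle u(\xbar,z)$ and $\tilde{w}_2(\ybar)-\tilde{w}_2(\ybar+z)=\triangle u(\ybar,z)$ there; and, since $|\abar|\to 0$ as $L\to\infty$ by~\eqref{AB03}, for all $L$ large enough one has $\tilde\delta<|\abar|^\theta<\tilde\varrho$, while $\xbar+z,\ybar+z\in B_{\varrho_2}$ for every $z\in B_{\tilde\varrho}$, so the $C^{0,\upkappa}$-seminorm $[u]_{\upkappa,\varrho_2}$ controls the differences $\triangle u(\xbar,z),\triangle u(\ybar,z)$.

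For $J_2$ I would split $\tilde{\cD}_2=(B_{|\abar|^\theta}\setminus B_{\tilde\delta})\cup(B_{\tilde\varrho}\setminus B_{|\abar|^\theta})$. On the inner annulus, apply the fundamental theorem of calculus to $J_q(\triangle u(\xbar,z))-J_q(\triangle u(\ybar,z))$ exactly as in the treatment of $I_{1,3}$; the inequality $\Phi(\xbar+z,\ybar+z)\le\Phi(\xbar,\ybar)$ gives $\triangle u(\xbar,z)-\triangle u(\ybar,z)\ge m_1\triangle\psi(\xbar,z)$, and, discarding the resulting nonnegative remainder (times the positive kernel) and using Lemma~\ref{L2.4} to handle the $\triangle\psi$ contribution, one obtains
\[
J_2\ \ge\ -\kappa\, m_1 M\int_{B_{|\abar|^\theta}\setminus B_{\tilde\delta}}\big(|\triangle u(\xbar,z)|+|\triangle u(\ybar,z)|\big)^{q-2}|\triangle\psi(\xbar,z)|\frac{dz}{|z|^{n+tq}}\ +\ (\text{contribution of }B_{\tilde\varrho}\setminus B_{|\abar|^\theta}).
\]
Inserting $|\triangle u(\xbar,z)|+|\triangle u(\ybar,z)|\le 2[u]_{\upkappa,\varrho_2}|z|^\upkappa$, the Taylor bound $|\triangle\psi(\xbar,z)|\le\kappa_2(|z|^2+|\nabla\psi(\xbar)||z|)$, and $|\nabla\psi(\xbar)|\le 2m(\psi(\xbar))^{\frac{m-1}{m}}\le\kappa_5|\abar|^{\frac{m-1}{m}\upkappa}$ (the last from~\eqref{E2.3}), then passing to polar coordinates, produces the first two terms $\int_{\tilde\delta}^{|\abar|^\theta}r^{\upkappa(q-2)+1-tq}dr$ and $|\abar|^{\frac{m-1}{m}\upkappa}\int_{\tilde\delta}^{|\abar|^\theta}r^{\upkappa(q-2)-tq}dr$ of the asserted bound.

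For the outer annulus $B_{\tilde\varrho}\setminus B_{|\abar|^\theta}$, since $q>2$ Lemma~\ref{L2.4} yields $|J_q(\triangle u(\xbar,z))-J_q(\triangle u(\ybar,z))|\le c_q(|\triangle u(\xbar,z)|+|\triangle u(\ybar,z)|)^{q-2}|\triangle u(\xbar,z)-\triangle u(\ybar,z)|$; bounding the first factor by $\lesssim|z|^{\upkappa(q-2)}$ and the second by $2[u]_{\upkappa,\varrho_2}|\abar|^\upkappa$ leaves $\lesssim M|\abar|^\upkappa\int_{|\abar|^\theta}^{\tilde\varrho}r^{\upkappa(q-2)-tq-1}dr$. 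The only genuinely new point — and the step I expect to need the most care — is rewriting this $tq$-weighted tail in the $sp$-form appearing in the statement: since $r\le\tilde\varrho$, $q\ge p$, $\upkappa\ge0$ and $sp\ge tq$, one has $r^{\upkappa(q-2)-tq-1}\le\tilde\varrho^{\,\upkappa(q-p)+sp-tq}\,r^{\upkappa(p-2)-sp-1}$, and because $\upkappa<\frac{sp}{p-2}$ the integral $\int_{|\abar|^\theta}^{\tilde\varrho}r^{\upkappa(p-2)-sp-1}dr$ is at most $\frac{1}{sp-\upkappa(p-2)}|\abar|^{\theta(\upkappa(p-2)-sp)}$. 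This gives the last term $|\abar|^{\upkappa+\theta(\upkappa(p-2)-sp)}$ and completes the estimate for $J_2$.

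Finally, the estimate of $J_3$ is immediate: on $\tilde{\cD}_2$ we have $|J_q(\tilde{w}_2(\ybar)-\tilde{w}_2(\ybar+z))|=|\triangle u(\ybar,z)|^{q-1}\le[u]_{\upkappa,\varrho_2}^{q-1}|z|^{\upkappa(q-1)}$, so combining with the H\"older bound $|\xi(\xbar,z)-\xi(\ybar,z)|\le C|\abar|^\alpha$ (H\"older in the first slot, uniformly in the second) and integrating in polar coordinates over $B_{\tilde\varrho}\setminus B_{\tilde\delta}$ gives $|J_3|\le C|\abar|^\alpha\int_{\tilde\delta}^{\tilde\varrho}r^{\upkappa(q-1)-1-tq}dr$, as claimed. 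No principal-value subtleties appear because every domain of integration stays outside $B_{\tilde\delta}$, where the test functions have been replaced by $u$.
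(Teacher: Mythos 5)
Your proof is correct and takes essentially the same route as the paper, which simply runs the argument of Lemma~\ref{L3.3} with $(q,tq)$ in place of $(p,sp)$ (carrying the factor $M$ from $0\le\xi\le M$) and, for the outer annulus, the comparison $\int_{|\abar|^\theta}^{\tilde\varrho} r^{\upkappa(q-2)-tq-1}\,dr\le \int_{|\abar|^\theta}^{\tilde\varrho} r^{\upkappa(p-2)-sp-1}\,dr$, valid because $\upkappa(q-p)\ge 0\ge tq-sp$ and $r\le 1$; your explicit factor $\tilde\varrho^{\,\upkappa(q-p)+sp-tq}$ is just a rewriting of that step, and your $J_3$ estimate coincides with the paper's.
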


\begin{proof}
Estimate of $J_2$ follows from Lemma~\ref{L3.3}. Note that to estimate the term analogous to $I_{2,3}$ we only use the inequality
$$\int_{\hat\delta}^{\tilde\rho} r^{\upkappa(q-2)-tq-1} dr\leq \int_{\hat\delta}^{\tilde\rho} r^{\upkappa(p-2)-sp-1} dr.$$ 
For the second estimate, using the H\"{o}lder continuity of $\xi$, we see that
\begin{align*}
J_3\geq -\kappa |\abar|^\alpha\int_{\tilde\cD_2} |z|^{\upkappa(q-1)-n-tq} \dz = -\kappa_1 |\abar|^\alpha \int_{\tilde\delta}^{\tilde\varrho} r^{\upkappa(q-1)-1-tq} dr
\end{align*}
for some suitable constants $\kappa, \kappa_1$.
\end{proof}

Now we are ready to prove our main result in the superquadratic case.
\begin{thm}\label{T3.7}
Let $p>2$ and $\xi$ be $\alpha$-H\"{o}lder continuous. Also, let $f\in C^{0,\beta}(\bar{B}_2)$
and $u\in C(B_2)\cap L^{p-1}_{sp}(\Rn)\cap L^{q-1}_{tq}(\Rn)$ be a viscosity solution to \eqref{E2.1}. Then $u\in C^{0, \gamma}(\bar{B}_1)$ for any $\gamma<\gamma_\circ$ where
$\gamma_\circ=\min\{\frac{sp+\alpha\wedge\beta}{p-1}, \frac{sp}{p-2}, 1\}$. Additionally, when $\gamma_\circ=\frac{sp+\alpha\wedge\beta}{p-1}<\min\{1, \frac{sp}{p-2}\}$, we have
$u\in C^{0, \gamma_\circ}(\bar{B}_1)$, and
when 
$\min\{\frac{sp+\alpha\wedge\beta}{p-1}, \frac{sp}{p-2}\}>1$, we have $u\in C^{0, 1}(\bar{B}_1)$.
The H\"{o}lder norm $\norm{u}_{C^{0, \gamma}(B_1)}$, including $\gamma=1$, is bounded above  by a constant dependent on the data 
$M, A, s, p, t, q, n$.
\end{thm}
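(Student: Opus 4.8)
**The plan is to run a bootstrap (iteration) argument on the H\"older exponent, using the doubling/localization setup and the term-by-term estimates of $I_1,\dots,I_4,J_1,\dots,J_4$ collected above.** The skeleton is the contradiction scheme of Section~\ref{S-genstr}: fix $1\le\varrho_1<\varrho_2\le 2$ and the doubling function $\Phi$ in \eqref{E2.2} with $\varphi=\varphi_\gamma(t)=t^\gamma$, assume $\sup\Phi>0$ for all large $L$, produce the maximum point $(\xbar,\ybar)$ with $\abar=\xbar-\ybar\to 0$, and arrive at \eqref{E2.7}. The good term $I_1$ contributes $+CL^{p-1}|\abar|^{\gamma(p-1)-sp}$ (Lemma~\ref{L3.1}(i)); the right side and every remaining term must be shown to be negligible compared with this, for a suitable choice of the free parameters $\delta=\varepsilon_1|\abar|$, $\tilde\delta=\varepsilon_1|\abar|$, $\delta_0=\eta_0$, and $\theta\in(0,1)$, and for $L$ large. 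Since $L\le 2A|\abar|^{-\gamma}$ by \eqref{AB03}, the target power is effectively $|\abar|^{\gamma(p-1)-sp}$, i.e.\ $|\abar|^{-(sp-\gamma(p-1))}$; a term of the form $CL^{p-1}|\abar|^{\sigma}$ is beaten iff $\sigma>\gamma(p-1)-sp$, and a term $C|\abar|^{\sigma}$ (no $L$) is beaten iff $\sigma>\gamma(p-1)-sp$ as well after absorbing $L^{p-1}\le(2A)^{p-1}|\abar|^{-\gamma(p-1)}$, i.e.\ iff $\sigma>-sp$... wait, one must be careful: terms with no $L$ power are compared against $CL^{p-1}|\abar|^{\gamma(p-1)-sp}\ge c|\abar|^{\gamma(p-1)-sp}$ only if $L$ is bounded below, which it is, so it suffices that their exponent exceed $\gamma(p-1)-sp$; but we also have the option of using $L$ large to kill them outright when their exponent is $>\gamma(p-1)-sp$ is false — so the genuinely delicate terms are those carrying the full $L^{p-1}$.

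\textbf{Step 1 (fix the supporting regularity).} Run the argument assuming inductively that $u\in C^{0,\upkappa}(\bar B_{\varrho_2})$ for the current exponent $\upkappa<\min\{1,\frac{sp}{p-2},\frac{sp}{p-1}\}$ with a quantitative bound on $[u]_{\upkappa,\varrho_2}$; the base case $\upkappa$ close to $0$ is the known qualitative H\"older continuity from \cite{Lin16,FZ23} (or the $\breve\gamma$ estimate of \cite{BT25}). In the contradiction step we then prove the improved estimate $u\in C^{0,\gamma}(\bar B_{\varrho_1})$ with $\gamma$ a fixed increment closer to $\gamma_\circ$, the shrinkage $\varrho_2\downarrow\varrho_1$ being absorbed by the localizer $m_1\psi$.

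\textbf{Step 2 (assemble the inequality).} Insert the estimates: $I_1$ from Lemma~\ref{L3.1}(i); $I_2\ge -CL^{p-1}\varepsilon_1^{p(1-s)}|\abar|^{\gamma(p-1)-sp}$ from Lemma~\ref{L3.2} (same power as $I_1$, so choose $\varepsilon_1$ small to make it a small fraction of $I_1$); $I_3$ from Lemma~\ref{L3.3}; $I_4,J_4$ from Lemma~\ref{L3.4} (pure powers $|\abar|^{\upkappa},|\abar|^{\upkappa(p-1)},|\abar|^{\alpha},\dots$ — harmless since $L$ large kills them once the exponent strictly beats $\gamma(p-1)-sp$, which, given $\gamma$ is being pushed only slightly past the old $\upkappa$, holds because $\upkappa>\gamma(p-1)-sp$ is implied by $\upkappa<\frac{sp}{p-1}$... one checks the arithmetic); $J_1\ge -CL^{p-1}\varepsilon_1^{q(1-t)}|\abar|^{\gamma(p-1)-sp}$ from Lemma~\ref{L3.5} (again same power as $I_1$, absorb via small $\varepsilon_1$ — this is exactly where $tq\le sp$ is used); and $J_2,J_3$ from Lemma~\ref{L3.6}. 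The two integral terms in $I_3$ (and their $J_2$ analogues), namely $\int_\delta^{|\abar|^\theta} r^{\upkappa(p-2)+1-sp}\,dr$ and $|\abar|^{\frac{m-1}{m}\upkappa}\int_\delta^{|\abar|^\theta} r^{\upkappa(p-2)-sp}\,dr$, together with $|\abar|^{\upkappa+\theta(\upkappa(p-2)-sp)}$, must be shown to have total order $|\abar|^{\sigma}$ with $\sigma>\gamma(p-1)-sp$. This is the content of the ``refined Step 3/Step 4'' advertised in the introduction: one evaluates the $r$-integrals (the sign of $\upkappa(p-2)+2-sp$ and $\upkappa(p-2)+1-sp$ dictates whether the dominant contribution comes from the endpoint $\delta=\delta_0|\abar|$ or $|\abar|^\theta$), optimizes over $\theta$, and reads off the admissible gain in the exponent. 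The upshot of this optimization is precisely the constraint $\gamma(p-1)\le sp+\alpha\wedge\beta$ together with $\gamma(p-2)\le sp$ (the latter forcing the $\frac{sp}{p-2}$ cap when $p>2$), with the $\alpha\wedge\beta$ entering through $J_3$ (the $|\abar|^\alpha$ factor) and the right side $C|\abar|^\beta$ of \eqref{E2.4}.

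\textbf{Step 3 (close the iteration and pass to the endpoint).} Because each pass strictly increases the admissible $\upkappa$ by a definite amount (as long as we are below $\gamma_\circ$), finitely many iterations give $u\in C^{0,\gamma}(\bar B_1)$ for every $\gamma<\gamma_\circ$, with the norm bound tracked through the constants. For the endpoint claims: when $\gamma_\circ=\frac{sp+\alpha\wedge\beta}{p-1}<\min\{1,\frac{sp}{p-2}\}$ the inequality at $\gamma=\gamma_\circ$ is still strict in all the competing terms (the only borderline terms involve $\alpha\wedge\beta$ and they produce exactly $|\abar|^{\gamma_\circ(p-1)-sp}$, matchable against $I_1+I_2+J_1$ by choosing $\varepsilon_1$ small), so $\gamma=\gamma_\circ$ itself is attained; when $\min\{\frac{sp+\alpha\wedge\beta}{p-1},\frac{sp}{p-2}\}>1$, we rerun the whole scheme with the Lipschitz profile $\varphi=\tilde\varphi(\tfrac{r_\circ}{3}t)$ from \eqref{varphi}, using the $\log$-corrected versions of Lemmas~\ref{L3.1}(ii), \ref{L3.2}, \ref{L3.5}: now $I_1\ge CL^{p-1}|\abar|^{p-1-sp}(\log^2|\abar|)^{-\uptheta}$, the bad terms carry a strictly larger negative power of $\log|\abar|$ (this is why the exponents $\rho,\tilde\rho$ in $\delta,\tilde\delta$ are chosen as they are), and the logarithmic slack closes the estimate, yielding $u\in C^{0,1}(\bar B_1)$.

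\textbf{Main obstacle.} The crux is Step~2's bookkeeping of $I_3$ (and $J_2$): one must simultaneously (a) choose $\theta$ so that the near-diagonal integral $\int_\delta^{|\abar|^\theta}r^{\upkappa(p-2)+1-sp}\,dr$ does not overwhelm $|\abar|^{\gamma(p-1)-sp}$ — which constrains how much the exponent can grow in one step and is where the two caps $\frac{sp+\alpha\wedge\beta}{p-1}$ and $\frac{sp}{p-2}$ emerge — while (b) keeping the far term $|\abar|^{\upkappa+\theta(\upkappa(p-2)-sp)}$ and the gradient-of-localizer term $|\abar|^{\frac{m-1}{m}\upkappa}\int_\delta^{|\abar|^\theta}r^{\upkappa(p-2)-sp}\,dr$ under control, the latter requiring $m$ chosen large. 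Getting all three to beat $\gamma(p-1)-sp$ for a $\gamma$ strictly larger than the incoming $\upkappa$ — and verifying the endpoint versions survive — is the delicate computation that distinguishes this argument from \cite{BT25}.
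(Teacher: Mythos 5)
Your overall scheme is the same as the paper's (doubling function, Ishii--Lions contradiction, the estimates of Lemmas~\ref{L3.1}--\ref{L3.6}, a H\"older-profile bootstrap, and a log-corrected Lipschitz profile for the endpoint $\gamma=1$), but there is a genuine gap: the one computation that actually produces the caps $\frac{sp+\alpha\wedge\beta}{p-1}$ and $\frac{sp}{p-2}$ and that guarantees a \emph{uniform} per-iteration gain is only announced, never performed. You write that ``the upshot of this optimization is precisely the constraint $\gamma(p-1)\le sp+\alpha\wedge\beta$ together with $\gamma(p-2)\le sp$'' and that ``each pass strictly increases the admissible $\upkappa$ by a definite amount,'' but with the naive choices you describe the increment degenerates as $\upkappa$ grows: the balance between the localizer-gradient term $|\abar|^{\frac{m-1}{m}\upkappa}\int_\delta^{|\abar|^\theta}r^{\upkappa(p-2)-sp}\,dr$, the near term $\int_\delta^{|\abar|^\theta}r^{\upkappa(p-2)+1-sp}\,dr$ and the far term $|\abar|^{\upkappa+\theta(\upkappa(p-2)-sp)}$ permits at best $\upkappa_1\lesssim\frac{\upkappa+sp}{p-1}$, whose iterates only \emph{converge} to $\frac{sp}{p-2}$ (see the remark following Theorem~\ref{T3.7}); to reach every $\gamma<\gamma_\circ$ in finitely many steps one needs the quantitative statement that the gain is bounded below by a fixed $\uprho>0$ on $[0,\gamma]$. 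In the paper this is done by introducing the parabola $\ell(y)=y^2(p-2)-[sp+2(p-2)]y+2sp$ in \eqref{AB04}, setting $\uprho=\frac{1}{4(p-1)}\min_{[0,\gamma]}\ell>0$ (positive exactly because $\gamma<\min\{1,\frac{sp}{p-2}\}$ -- this is where $\frac{sp}{p-2}$ really enters), choosing $\theta=\frac{\upkappa_1(p-1)-sp}{\upkappa(p-2)+2-sp}$ and $m$ large so that $\frac{m-1}{m}\upkappa+\theta[\upkappa(p-2)-sp]>\upkappa_1(p-1)-sp$. Without this (or an equivalent) computation the theorem is not proved; your ``main obstacle'' paragraph correctly locates the difficulty but does not resolve it.

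Two further points where your outline would need repair. First, the bootstrap beyond $\breve\gamma$ must be \emph{staged}: the paper first proves $C^{0,\gamma}$ for all $\gamma<\breve\gamma$ and then \emph{attains} $\breve\gamma=\min\{1,\frac{sp}{p-1}\}$ (Steps 1--2, the case $\frac{sp}{p-1}<1$ needing its own argument and the case $\breve\gamma=1$ already needing the Lipschitz profile), because the super-$\breve\gamma$ iteration is started at $\upkappa=\frac{sp}{p-1}$ and uses $\upkappa(p-1)-sp\ge0$ repeatedly (in checking $\theta\in(0,1)$ and in the estimate of $J_3$, where $\upkappa(q-1)-tq\ge 0$ forces the logarithmic bound $J_3\ge-C|\abar|^\alpha|\log(\varepsilon_1|\abar|)|$, a term you do not account for); your base-case citation of the $\breve\gamma$ estimate of \cite{BT25} does not cover the double-phase operator treated here. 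Second, some of your parenthetical justifications fail in the regime $\upkappa\ge\frac{sp}{p-1}$: the claim that $I_4,J_4$ are harmless ``because $\upkappa<\frac{sp}{p-1}$'' no longer applies there (one needs $\upkappa_1(p-1)-sp<\min\{\alpha\wedge\beta,\upkappa\}$, which follows from the specific definition of $\upkappa_1$), and at the attained endpoint $\gamma_\circ=\frac{sp+\alpha\wedge\beta}{p-1}$ the borderline terms $|\abar|^{\alpha\wedge\beta}$ are beaten because they carry no factor $L^{p-1}$ (so $L$ large closes the argument), not by shrinking $\varepsilon_1$, which only absorbs $I_2$ and $J_1$ into $I_1$.
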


\begin{proof}
We break the proof into multiple steps as follows:
\begin{itemize}
\item[Step 1] --We show $u\in C^{0, \gamma}_{\rm loc}(B_2)$ for any $\gamma<\min\{1, \frac{sp}{p-1}\}=\breve{\gamma}$.
\item[Step 2]-- We show that $u \in C^{0, \breve\gamma}_{\rm loc}(B_2)$.
\item[Step 3]-- We show that $u \in C^{0, \gamma}_{\textit{\rm loc}}(B_2)$ for any $\gamma<\gamma_\circ$, and when $\gamma_{\circ}=\frac{sp+ \alpha \wedge \beta}{p-1} <\min \{ 1, \frac{sp}{p-2} \}$, we have $u \in C^{0, \gamma_\circ}_{\textit{\rm loc}}(B_2)$.
\item[Step 4]-- Assuming $\min\{\frac{sp+\alpha\wedge\beta}{p-1}, \frac{sp}{p-2}\} >1$, we show that $u \in C^{0,1}_{\rm loc}(B_2)$.
\end{itemize}

\noindent{\bf Step 1.}\; First, we show that $u\in C^{0, \gamma}_{\rm loc}(B_2)$ for any $\gamma<\min\{1, \frac{sp}{p-1}\}=\breve{\gamma}$. In fact, the proof in this step works if $\xi$ is 
$\alpha$-H\"{o}lder in the first coordinate uniformly with respect to the second. The proof uses a bootstrapping argument.
Fix $1<\varrho_1<\varrho_2<2$, $\gamma<\breve\gamma$ and let
$u\in C^{0, \upkappa}(B_{\varrho_2})$ for some $\upkappa\in [0, \gamma)$. Set
$$\upkappa_1=\min\{\gamma, \upkappa + \frac{1}{2(p-1)}, \upkappa + \frac{sp-(p-2)\gamma}{2(p-1)}, \upkappa + \frac{\alpha}{p-1}\}.$$
Note that $\gamma<\frac{sp}{p-1}<\frac{sp}{p-2}\Rightarrow sp-(p-2)\gamma>0$. Since $\upkappa_1< \upkappa+ \frac{1}{(p-1)}$,
we can choose $m\geq 3$ large enough so that 
$$ \upkappa(p-2) +1  \geq \upkappa_1 (p-1) - \frac{m}{m-1}\upkappa.$$
With this choice of $\varrho_1, \varrho_2, m$ and $\varphi=\varphi_{\upkappa_1}$ (see \eqref{varphi}) we consider \eqref{E2.2} and arrive at \eqref{E2.7}.
We set $\delta=\tilde\delta=\varepsilon_1|\abar|$. Using Lemma~\ref{L3.1}, ~\ref{L3.2} and ~\ref{L3.5}, we choose
$\varepsilon_1$ small enough, independent of $L$, so that
\begin{equation}\label{ET3.7A}
I_1+I_2+J_2\geq \frac{C}{2} L^{p-1} |\abar|^{\upkappa_1(p-1)-sp}
\end{equation}
for all $L\geq L_0$. Since $\upkappa_1\leq \upkappa + \frac{sp-(p-2)\gamma}{(p-1)}< \upkappa + \frac{sp-(p-2)\upkappa}{(p-1)}=\frac{sp+\upkappa}{p-1}$, we can choose $\theta\in (0, 1)$
small enough so that $\upkappa +\theta(\upkappa(p-2)-sp)>\upkappa_1(p-1)-sp$. To estimate $I_3$ we set this $\theta$ in Lemma~\ref{L3.3}. Since $|\abar|< 1$, we get
$$|\abar|^{\kappa +\theta(\upkappa(p-2)-sp)}\leq |\abar|^{\upkappa_1 (p-1)-sp}.$$
Again, by our choice of $m$, we have
$\upkappa (p-2)+1-sp\geq \upkappa_1 (p-1)-\frac{m}{m-1}\upkappa -sp$, implying
\begin{align*}
|\abar|^{\frac{m}{m-1}\upkappa}\int_{\delta}^{|\abar|^\theta} r^{\upkappa (p-2)-sp}dr
&\leq |\abar|^{\frac{m}{m-1}\upkappa}\int_{\delta}^{1} r^{\upkappa_1 (p-1)-\frac{m}{m-1}\upkappa-1 -sp}dr
\\
&= \frac{1}{sp-\upkappa_1 (p-1)+\frac{m}{m-1}\upkappa} \varepsilon_1^{\upkappa_1 (p-1)-\frac{m}{m-1}\upkappa -sp} |\abar|^{\upkappa_1 (p-1)-sp}
\\
&\leq \frac{1}{sp-\gamma (p-1)+\frac{m}{m-1}\upkappa} \varepsilon_1^{\upkappa_1 (p-1)-\frac{m}{m-1}\upkappa -sp} |\abar|^{\upkappa_1 (p-1)-sp}.
\end{align*}
Also, since $\upkappa(p-2) + 1 -sp> \upkappa_1(p-1) -\upkappa -sp> \upkappa_1(p-1) -1 -sp$, we have
$$\int_\delta^{|\abar|^\theta} r^{\upkappa(p-2) + 1-sp} dr\leq \int_\delta^{1} r^{\upkappa_1(p-1) -1 -sp} dr\leq \frac{\varepsilon_1^{\upkappa_1(p-1)-sp}}{sp-\gamma(p-1)}|\abar|^{\upkappa_1(p-1)-sp}.$$
Gathering the above estimate in Lemma~\ref{L3.3} we see that for some constant $C_{\varepsilon_1}$, dependent on $\varepsilon_1, \gamma, p, s$, we have
$$I_3\geq -C_{\varepsilon_1} |\abar|^{\upkappa_1(p-1)-sp}.$$
Since $\upkappa(q-2)-tq\geq \upkappa(p-2)-sp$, using Lemma~\ref{L3.6} and the above estimate we also have $J_2\geq -C_{\varepsilon_1} |\abar|^{\upkappa_1(p-1)-sp}$. 
Also,
\begin{align*}
J_3\geq - C|\abar|^\alpha \int_{\tilde\delta}^{\tilde\varrho} r^{\upkappa(q-1)-1-tq} dr &\geq - C|\abar|^\alpha \int_{\tilde\delta}^{1} r^{\upkappa(p-1)-1-sp} dr
\\
&\geq -\frac{C}{sp-\gamma(p-1)} \varepsilon_1^{\upkappa(p-1)-sp} |\abar|^{\alpha+ \upkappa(p-1)-sp}
\\
&\geq -\frac{C}{sp-\gamma(p-1)} \varepsilon_1^{\upkappa(p-1)-sp} |\abar|^{\upkappa_1(p-1)-sp},
\end{align*}
where the last line follows due to our choice of $\upkappa_1$ satisfying $\upkappa_1\leq \upkappa + \frac{\alpha}{p-1}$.
Because of the finiteness of $A$, it is evident that $|I_4| + |J_4|\leq C$.
Combing these estimates and \eqref{ET3.7A} in \eqref{E2.7} we obtain
$$C_1\geq |\abar|^{\upkappa_1(p-1)-sp} (\frac{C}{2}L^{p-1}-\tilde{C}_{\varepsilon_1})\geq |\abar|^{\gamma(p-1)-sp} (\frac{C}{2}L^{p-1}-\tilde{C}_{\varepsilon_1})$$
for some constant $\tilde{C}_{\varepsilon_1}$ (dependent on $\varepsilon_1$) and for all $L\geq L_0$. Since $\gamma(p-1)-sp<0$, $|\abar|\to 0$ as $L\to \infty$, this leads to a contradiction for a large enough $L$.
In other order, if we set a $L$ for which the above inequality fails to hold, we must have $\Phi$ in \eqref{E2.2} negative in $B_2\times B_2$. In particular,
$$u(x)-u(y)\leq L\varphi_{\upkappa_1}(|x-y|)= L|x-y|^{\upkappa_1}\quad \text{for all}\; x, y\in B_{\varrho_1}.$$
This proves that $u\in C^{0, \upkappa_1}(\bar{B}_{\varrho_1})$. Now we can follow a standard bootstrapping argument to conclude that $u\in C^{0, \gamma}_{\rm loc}(B_2)$.

\medskip

\noindent{\bf Step 2.}\;   We show that $u \in C^{0, \breve\gamma}_{\rm loc}(B_2)$.
First we suppose $\breve\gamma=\frac{sp}{p-1}<1$.
From Step 1 we know that $u\in C^{0, \gamma}_{\rm loc}(B_2)$ for any $\gamma<\breve\gamma$. We choose $\upkappa<\breve\gamma$ 
large enough so that $\upkappa(p-2) -sp>-1$ and $\breve\gamma-\upkappa<\frac{\alpha}{p-1}$. Due to this choice of $\upkappa$ we will have the following
\begin{gather*}
\upkappa(q-2)+2 -tq\geq \upkappa(p-2)+2 -sp>0, \upkappa(q-2)+1 -tq\geq \upkappa(p-2)+1 -sp>0.
\end{gather*}
We also set $\theta\in (0, 1)$ small enough so that $\upkappa+\theta[\upkappa(p-2)-sp]>0$. Since $u\in C^{0, \upkappa}(\bar{B}_{\varrho_2})$, from Lemma~\ref{L3.3}-\ref{L3.6} we obtain
$$I_3+I_4+J_2+J_4\geq -C_1, \quad J_3\geq -C_1 \varepsilon_1^{\upkappa(q-1)-tq}|\abar|^{\alpha+ \upkappa(p-1)-sp}\geq C_1 \varepsilon_1^{\upkappa(q-1)-tq}.$$
\eqref{ET3.7A} continuous to hold. Hence, from \eqref{E2.7}, we obtain
$$ \frac{C}{2}L^{p-1} - C_1 \varepsilon_1^{\upkappa(q-1)-tq}\leq C_1$$
for all $L\geq L_0$, where $\varepsilon_1$ is chosen small but fixed so that \eqref{ET3.7A} holds. But this is a contradiction for $L$ large enough. As argued before, it proves that
$u\in C^{0, \breve\gamma}(\bar{B}_{\varrho_1})$.

Now suppose  $\breve\gamma=1\leq \frac{sp}{p-1}$. We argue along the lines of \cite[Theorem~6.1]{BT25}. 
Applying Step 1, we can choose $\upkappa\in (0, 1)$ large enough such that $\upkappa(p-1) + 1 - sp>0$,
$\alpha + \upkappa(p-1)>(p-1)$ and $C^{0, \upkappa}(\bar{B}_{\varrho_2})$.
Now choose $m\geq 3$ large enough so that 
$$\frac{m-1}{m}\upkappa+ \upkappa(p-2) + 1 - sp>0.$$
The function $\varphi$ is chosen to be Lipschitz profile function as defined in Lemma~\ref{L3.1}.
We let 
$\delta_0=\delta_1(\log^2|\abar|)^{-1}$ from Lemma~\ref{L3.1}, $\delta =\varepsilon_1 (\log^{2\rho}(|\abar|))^{-1}|\abar|$ from Lemma~\ref{L3.2}
and $\tilde\delta=\varepsilon_1 (\log^{2\tilde\rho}(|\abar|))^{-1}|\abar|$ from Lemma~\ref{L3.5}. Then for a small but fixed $\varepsilon_1$ we get
$$I_1+I_2+J_1\geq \frac{C}{2} L^{p-1} |\abar|^{p-1-sp} (\log^2(|\abar|))^{-\uptheta}.$$
To compute a lower bound of $I_3$, we set $\theta\in (0, 1)$ small enough so that  $\upkappa + \theta[\kappa(p-2)  - sp]>0$.
Also, by our choice of $\upkappa$,
$$\int_\delta^{|\abar|^\theta} r^{\upkappa(p-2) + 1-sp} dr\geq C |\abar|^{\theta[\upkappa(p-2) + 2-sp]}.$$
Since $\frac{sp}{p-1}\geq 1$, we have $\upkappa(p-2)-sp\leq \upkappa(p-2) - (p-1)\leq (p-2)(\upkappa-1)-1<-1$. Thus
\begin{align*}
|\abar|^{\frac{m-1}{m}\upkappa} \int_\delta^{|\abar|^\theta} r^{\upkappa(p-2) -sp} dr\leq C_{\varepsilon_1} (\log^{2\rho}(|\abar|))^{sp-1-\upkappa(p-2)}
|\abar|^{\frac{m-1}{m}\upkappa+ \upkappa(p-2) + 1 - sp}.
\end{align*}
Combining these estimate in Lemma~\ref{L3.3} we find 
$I_3\geq -C_{\varepsilon_1} |\abar|^{\theta_\upkappa}$ for some positive $\theta_\upkappa$.
Using the property $p\leq q$ and $t q\leq sp$, we see that $J_2$ also satisfies a similar lower bound. Again, 
\begin{align*}
|\abar|^\alpha \int_{\tilde\delta}^{\tilde\varrho} r^{\upkappa(q-1)-1-tq} dr &\leq C \int_{\tilde\delta}^{\tilde\varrho} r^{\upkappa(p-1)-1-sp} dr
\\
&\leq C |\abar|^\alpha (\tilde\delta)^{\upkappa(p-1)-sp}
\\
&= C (\varepsilon_1\log^{2\tilde\rho}(|\abar|))^{sp-\upkappa(p-1)} |\abar|^{\alpha+\upkappa(p-1)-sp}
\\
&\leq C_{\varepsilon_1} |\abar|^{\theta_1}
\end{align*}
for some $\theta_1> (p-1)-sp$ (which is possible by our choice in $\upkappa$) and for $L\geq L_0$ (which makes $|\abar|$ small), where
$C_{\varepsilon_1}$ is a constant dependent on $\varepsilon_1$. By Lemma~\ref{L3.6} we then have $J_3\geq -C_{\varepsilon_1} |\abar|^{\theta_1}$.
Thus, using Lemma~\ref{L3.4} and gathering the above estimate in \eqref{E2.7} we arrive at
$$\frac{C}{2} L^{p-1} |\abar|^{p-1-sp} (\log^2(|\abar|))^{-\uptheta}\leq 
C_{\varepsilon_1} (|\abar|^{\theta_1}
+  |\abar|^{\theta_\upkappa}) + C \max\{|\abar|^\alpha, |\abar|^{\upkappa}, |\abar|^{\beta}\}.$$
Since $p-1-sp< \min\{\theta_1, \alpha, \beta, \upkappa\} $, the above inequality can not hold for small enough $|\abar|$, 
or equivalently for large enough $L$. This contradiction implies that $u\in C^{0, 1}_{\rm loc}(B_2)$.

\medskip

\noindent{\bf Step 3.}\; 
Let $\gamma<\gamma_\circ= \min\{1, \frac{sp+\alpha\wedge\beta}{p-1}, \frac{sp}{p-2}\}$.
We show that $u \in C^{0, \gamma}_{\textit{\rm loc}}(B_2)$.
In view Step 2, we only need to consider the situation $\breve\gamma<1$. Furthermore, we need to
consider $\gamma \in (\frac{sp}{p-1}, \gamma_\circ)$.
We employ a bootstrapping argument as in Step 1. To do so, let us assume that $u \in C^{0, \upkappa}(B_{\varrho_2})$
for some $\upkappa\in [\frac{sp}{p-1}, \gamma)$. Note that, due to Step 2, we can start the bootstrapping with
$\upkappa = \frac{sp}{p-1}$.  Also, we have $\upkappa(p-1)-sp\geq 0$. Now consider the parabola
\begin{equation}\label{AB04}
\ell(y)= y^2(p-2) -[sp+2(p-2)]y + 2sp.
\end{equation}
It is easy to check that $\ell(\frac{sp}{p-2})=0$, $\ell(1)=sp-(p-2)$ and $\ell'(1)= -sp<0$. Thus, for $\frac{sp}{p-2}<1$, the leftmost zero of $\ell$ is at $\frac{sp}{p-2}$ and
the function $\ell$ strictly decreases in $(-\infty, \frac{sp}{p-2}]$. When $\frac{sp}{p-2}\geq 1$, the function is strictly decreasing in $(-\infty, 1]$ with $\ell(1)\geq 0$. Thus, given 
$\gamma<\min\{1, \frac{sp}{p-2}\}$ we have
$$\uprho:=\frac{1}{4(p-1)}\min_{y\in [0, \gamma]} \ell(y)>0.$$
Now we set
  $$\upkappa_1=\min\{{\gamma}, \upkappa + \frac{1}{2(p-1)}, \upkappa + \frac{sp-(p-2)\gamma}{2(p-1)}, \upkappa + \frac{\alpha}{p-1}, \upkappa+\uprho\}.$$
 Also, set $m$ large enough so that $2\uprho-\frac{p(1-s)}{2m(p-1)}>\uprho$.
Using Lemma \ref{L3.1}, \ref{L3.2} and \ref{L3.5}, we can choose $\varepsilon_1$ small enough, independent of $L$, so that
\begin{equation}\label{ET3.7B}
I_1 + I_2 + J_1 \geq \frac{C}{2}L^{p-1} |\abar|^{\upkappa_1(p-1) - sp}.
\end{equation}
Define $\theta=\frac{\upkappa_1(p-1)-sp}{\upkappa(p-2)+2-sp}$. Since $\upkappa(p-2)+2-sp\geq 2-\upkappa>1$ and $\upkappa_1(p-1)-sp< (p-1)\upkappa-(p-2)\gamma\leq (p-1)\upkappa-(p-2)\upkappa=\upkappa$
we have $\theta\in (0, 1)$. We use this $\theta$ in Lemma~\ref{L3.3}. Therefore,
\begin{align*}
\int_{\delta}^{|\abar|^{\theta}} r^{\upkappa(p-2) +1-sp} &\leq \frac{1}{\upkappa(p-2)+2-sp} |\abar|^{\theta(\upkappa(p-2)-sp+2)}
\\
&\leq  |\abar|^{\upkappa_1(p-1)-sp}
\end{align*}
using the fact that $\upkappa(p-2)+2-sp>1$. Let us now show that
$$\frac{m-1}{m}\upkappa + \theta[\upkappa(p-2)-sp]> \upkappa_1 (p-1)-sp \Leftrightarrow \upkappa_1< \frac{1}{p-1}\left[\frac{(m-1)\upkappa}{2m}(\upkappa(p-2)+2-sp)+sp\right].$$
Note that
\begin{align*}
\frac{1}{p-1}\left[\frac{(m-1)\upkappa}{2m}(\upkappa(p-2)+2-sp)+sp\right]-\upkappa
&=\frac{1}{2(p-1)}\ell(\kappa) -\frac{1}{2m(p-1)} \upkappa(\upkappa(p-2)+2-sp)
\\
&\geq  2\uprho- \frac{p(1-s)}{2m(p-1)}>\uprho.
\end{align*}
Thus, by our choice of $\upkappa_1$, we get
$$\upkappa_1 (p-1)-sp< \frac{m-1}{m}\upkappa + \theta[\upkappa(p-2)-sp]< \upkappa + \theta[\upkappa(p-2)-sp].$$
Therefore, 
\begin{align*}
|\abar|^{\frac{m-1}{m}\upkappa } \int_{\delta}^{|\abar|^{\theta}} r^{\upkappa(p-2)-sp} &\leq \frac{1}{\upkappa(p-2)-sp+1} |\abar|^{\frac{m-1}{m} + \theta (\upkappa(p-2)-sp+1)}
\\
&\leq \frac{1}{1-\gamma} |\abar|^{\upkappa_1(p-1)-sp}
\end{align*}
using $\upkappa(p-2)-sp+1\geq 1-\upkappa\geq 1-\gamma$. Since $|\abar| < 1$, we also get
$$|\abar|^{\upkappa+\theta(\upkappa(p-2)-sp)} \leq |\abar|^{\upkappa_1(p-1) -sp}$$
Combining the above three estimates we have
$$I_3 \geq -C |\abar|^{\upkappa_1(p-1) -sp} $$
for some constant $C$. Since $q\geq p$ and $t q\leq sp$, the estimate of $J_2$ turns out to 
$$J_2 \geq -C |\abar|^{\upkappa_1(p-1) -sp} .$$
From Lemma \ref{L3.4}, we obtain
$$I_4\geq -C|\abar|^{\upkappa}\quad \text{and}\quad |J_4| \geq -C\max \{ |\abar|^{\alpha}, |\abar|^{\kappa} \}.$$
Since $\upkappa \geq \frac{sp}{p-1} \geq \frac{tq}{q-1}$ which implies $\upkappa(q-1) -tq \geq 0$. Therefore, from Lemma \ref{L3.6}, we have 
\begin{align*}
J_3 &\geq - C|\abar|^\alpha \int_{\tilde\delta}^{\tilde\varrho} r^{\upkappa(q-1)-1-tq} dr
\\ 
&\geq - C|\abar|^\alpha |\log(\tilde\delta)|=- C|\abar|^{\alpha} |\log(\varepsilon_1|\abar|)|.
\end{align*}
Combing these estimates and \eqref{ET3.7B} in \eqref{E2.7} we obtain
$$C_2(|\abar|^{\beta }+ |\abar|^{\upkappa }+|\abar|^{\alpha} |\log(\varepsilon_1|\abar|)|)\geq |\abar|^{\upkappa_1(p-1)-sp} \left(\frac{C}{2}L^{p-1}-C_1\right)$$
for some constants $C, C_1, C_2$ and for all $L\geq L_0$.
Since $\upkappa_1(p-1)-sp< \min \{ \alpha\wedge\beta, \upkappa \}$ and $|\abar|\to 0$ as $L\to \infty$, this leads to a contradiction for a large enough $L$.
In other order, if we set a $L$ for which the above inequality fails to hold, we must have $\Phi$ in \eqref{E2.2} negative in $B_2\times B_2$. In particular,
$$u(x)-u(y)\leq L\varphi_{\upkappa_1}(|x-y|)= L|x-y|^{\upkappa_1}\quad \text{for all}\; x, y\in B_{\varrho_1}.$$
This proves that $u\in C^{0, \upkappa_1}(\bar{B}_{\varrho_1})$. Now we can follow a standard bootstrapping argument to conclude that $u\in C^{0, \gamma}_{\rm loc}(B_2)$.

Suppose $\gamma_{\circ}=\gamma=\frac{sp+ \alpha \wedge \beta}{p-1} <\min \{ 1, \frac{sp}{p-2} \}$.
This implies $\gamma(p-1)-sp<\gamma$. 
Choose $\upkappa=\frac{sp+ \alpha \wedge \beta-\epsilon}{p-1} >\frac{tq}{q-1}$ for $\epsilon>0$ small such that
$\gamma(p-1)-sp<\upkappa<\gamma$. Then all other estimates above 
remain the same except $J_3$. By our choice of $\upkappa$, $\upkappa(q-1)-1-tq>-1$, which implies
$$J_3 \geq -C|\abar|^\alpha \int_{\tilde\delta}^{\tilde\varrho} r^{\upkappa(q-1)-1-tq} dr \geq -C|\abar|^\alpha. $$
Combining the estimates  in \eqref{E2.7} we obtain
$$C_2(|\abar|^{\beta }+ |\abar|^{\upkappa }+|\abar|^{\alpha} )\geq |\abar|^{\gamma(p-1)-sp} \left(\frac{C}{2}L^{p-1}-C_1\right).$$
Since $\alpha \wedge \beta =\gamma(p-1)-sp < \upkappa$
and $|\abar|\to 0$ as $L\to \infty$, this leads to a contradiction for a large enough $L$. Therefore, $u\in C^{0,\gamma}_{\rm loc}(B_2)$.

\medskip

\noindent{\bf Step 4.}\; Suppose $\min\{\frac{sp+\alpha\wedge\beta}{p-1}, \frac{sp}{p-2}\} >1$. We show that $u \in C^{0,1}(\bar{B}_1)$.
If $ \frac{sp}{p-1}\geq 1$ the proof follows from Step 2 . So we assume $\frac{sp}{p-1}<1$. Consider the doubling
function $\Phi$ with $\varphi$ being the Lipschitz profile function given by \eqref{varphi} and scaled as in Lemma~\ref{L3.1}.
Since $\frac{sp}{p-2} > 1$ we have $\ell(\frac{sp}{p-2})=0$, $\ell(1) =sp-(p-2) >0$ and $\ell'(1)=-sp<0$ (see \eqref{AB04}). 
So the function $\ell$ is strictly decreasing and positive in $(-\infty, 1]$.
Let 
$$\uprho =\inf_{\upkappa \in (0,1)}\frac{\ell(\upkappa)}{4(p-1)}=\frac{\ell(1)}{4(p-1)} > 0.$$
From Step 3 we know that $u \in C^{0, \upkappa}(B_{\frac{3}{2}})$ for all $\upkappa <1.$ For our purpose we fix any 
$\upkappa\in (\frac{sp}{p-1}, 1)$ such that $\alpha +\upkappa(p-1)> p-1$, $\upkappa+\uprho>1$ and $(p-1)-sp<\upkappa$. We repeat the scheme described in section~\ref{S-genstr} with $\varrho_2=\frac{3}{2}$
and $\varrho_1=1$. As before, we suppose
$\sup_{B_2\times B_2}\Phi>0$. 
We set the notation
\begin{gather*}
\delta_0=\eta_0=\varepsilon_1 (\log^2(|\abar|))^{-1}, \quad
\uptheta = \frac{n+1}{2}+p-sp, \quad
\tilde\rho=\frac{\frac{n+1}{2}+q-tq}{q-tq}, 
\\
\rho=\frac{\frac{n+1}{2}+p-sp}{p-sp},
\quad
\cone=\{z\in B_{\delta_0|\abar|} \; :\; |\langle \abar, z\rangle| \geq (1-\eta_0)|\abar||z|\},
\\
\cD_1=B_\delta\cap \cone^c\quad \cD_2= B_{\frac{1}{8}}\setminus(\cD_1\cup\cone).
\end{gather*}
 From Lemma~\ref{L3.1}(ii), Lemma~\ref{L3.2}(ii) and Lemma~\ref{L3.5}, by choosing $\varepsilon_1$ small enough independent of $L$ we get,
 $$I_1 +I_2 + J_1 \geq \frac{C}{2} L^{p-1} |\abar|^{p-1-sp} (\log^2(|\abar|))^{-\uptheta},$$
 for all $L\geq L_0$, for some constant $L_0$.

Define $\theta=\frac{(p-1)-sp + \epsilon}{\upkappa(p-2)+2-sp} $. Since $\upkappa(p-2)+2-sp\geq 2-\upkappa>1$ and $(p-1)-sp < (p-1)-(p-2)=1$
we have $\theta\in (0, 1)$ for $\epsilon$ small enough. We use this $\theta$ in Lemma~\ref{L3.3}.
It is then easy to see that
\begin{align*}
\int_{\delta}^{|\abar|^{\theta}} r^{\upkappa(p-2) +1-sp} &\leq \frac{1}{\upkappa(p-2)+2-sp} |\abar|^{\theta(\upkappa(p-2)-sp+2)}
\\
&\leq  |\abar|^{(p-1)-sp +  \epsilon}
\end{align*}
using the fact that $\upkappa(p-2)+2-sp>1$. Let us now verify that
$$
\upkappa + \theta[\upkappa(p-2)-sp]> (p-1)-sp 
\Leftrightarrow 
1 < \frac{1}{2(p-1)} \left[\upkappa(\upkappa(p-2)+2-sp)+2sp \right] +\tilde\epsilon,
$$
where $\tilde\epsilon=\frac{\epsilon}{2(p-1)}$.
Note that
\begin{align*}
\frac{1}{2(p-1)} \left[\upkappa(\upkappa(p-2)+2-sp)+2sp \right]-\upkappa
&=  \frac{\ell(\upkappa)}{2(p-1)})
\\
&\geq  \uprho.
\end{align*}
Since $\upkappa+\uprho>1$, for $\epsilon$ small enough we get
$$(p-1)-sp<  \upkappa + \theta[\upkappa(p-2)-sp].$$
Since $|\abar| < 1$, we  have
$$|\abar|^{\upkappa+\theta(\upkappa(p-2)-sp)} \leq |\abar|^{(p-1) -sp +\varepsilon_2}$$
for some $\varepsilon_2>0$.
Set $m$ large enough such that $\frac{\upkappa}{m} < \theta$ which would imply $\upkappa\frac{m-1}{m} + \theta(\upkappa(p-2)-sp+1) > \upkappa + \theta(\upkappa(p-2)-sp)$.
Therefore, for some $\varepsilon_3>0$ we have
\begin{align*}
|\abar|^{\frac{m-1}{m}\upkappa } \int_{\delta}^{|\abar|^{\theta}} r^{\upkappa(p-2)-sp} &\leq \frac{1}{\upkappa(p-2)-sp+1} |\abar|^{\upkappa \frac{m-1}{m} + \theta (\upkappa(p-2)-sp+1)}
\\
&\leq C |\abar|^{(p-1)-sp + \varepsilon_3},
\end{align*}
Combining the above three estimates we have
$$I_3 \geq -C |\abar|^{(p-1) -sp +\varepsilon_4} $$
for some constants $C$ and $\varepsilon_4$. Since $q\geq p$ and $t q\leq sp$ the estimate of $J_2$ also turns out to be
$$J_2 \geq -C |\abar|^{(p-1) -sp +\varepsilon_4}.$$
From Lemma \ref{L3.4} we also have
$$I_4 \geq -C |\abar|^{\upkappa}$$
and
$$J_4 \geq -C ( |\abar|^{\upkappa} + |\abar|^{\alpha}).$$
Again, since $\upkappa(p-1)>sp$, we have
\begin{align*}
|\abar|^\alpha \int_{\tilde\delta}^{\tilde\varrho} r^{\upkappa(q-1)-1-tq} dr &\leq C \int_{\tilde\delta}^{\tilde\varrho} r^{\upkappa(p-1)-1-sp} dr
\\
&\leq \frac{C}{\upkappa(p-1)-sp} |\abar|^\alpha (\tilde\varrho)^{\upkappa(p-1)-sp}
\\
&= C_1  |\abar|^{\alpha}
\end{align*}
for some constant $C_1$.
By Lemma \ref{L3.6} we then have $J_3 \geq -C_1|\abar|^{\alpha}$.
Now putting the above estimates in \eqref{E2.7} we get
$$\tilde{C}(|\abar|^{\upkappa} + |\abar|^{\alpha} + |\abar|^{\beta} + |\abar|^{(p-1)-sp +\varepsilon_4}) \geq \frac{C}{2} L^{p-1} |\abar|^{p-1-sp} (\log^2(|\abar|))^{-\uptheta}.$$
Since $(p-1)-sp< \min \{ \alpha, \beta, \upkappa \}$, $|\abar|\to 0$ as $L\to \infty$, this leads to a contradiction in the same way as before for a large enough $L$. Thus, the proof is completed.
\end{proof}

\begin{rem}
In Step 3 above, if we set $\alpha=\beta=1$ (this includes the fractional $p$-harmonic case), the maximum possible value of $\upkappa_1$ in each iteration would be $\frac{1}{p-1}(\upkappa+sp)$.
Therefore, if we start with $\upkappa=\frac{sp}{p-1}$, after $k$-th iterations, we gain regularity 
$$
sp\left(\frac{1}{p-1}+\frac{1}{(p-1)^2}+\ldots + \frac{1}{(p-1)^{k+1}}\right).
$$
Letting $k\to\infty$, we get the limiting regularity $\frac{sp}{p-2}$. This is the reason we do not achieve the exponent $\frac{sp}{p-2}$
with the  iteration scheme.
\end{rem}

\section{Proof in the subquadratic case $1<p\leq 2$}\label{s-sub}
For the proof in this subquadratic case, we follow our general strategy stated in Section~\ref{S-genstr}. As done in Section~\ref{s-sup},
we begin with the estimates of $I_i, i=1,.., 4$ and $J_i, i=1,..,4$. Note that we already have established appropriate estimates for 
$I_1, I_2, I_4, J_1, J_4$ in the previous section. So we begin with the estimate on $I_3$.

\begin{lem}[Estimate of $I_3$]\label{L4.1}
Let $p\in (1,2]$. Suppose that $u\in C^{0, \upkappa}(\bar{B}_{\varrho_2})$ for some $\upkappa\in [0,  1)$. 
Let $\delta=\delta_0|\abar|$ where $\delta_0$ is chosen as in Lemma~\ref{L3.1}.
Then for any $\theta\in (0, 1)$,  we have $L_0>0$ such that
$$I_3\geq -C \left[\int_\delta^{|\abar|^\theta} r^{2(p-1) - 1-sp} dr  + 
|\abar|^{\frac{m-1}{m}\upkappa(p-1)} \int_\delta^{|\abar|^\theta} r^{p-2 -sp} dr + |\abar|^{\upkappa(p-1) -\theta sp}\right]$$
for all $L\geq L_0$, where the constant $C$ depends on $\upkappa, p, s, n, m, m_1$ and the $C^{0, \upkappa}$ norm of $u$ in
$B_{\varrho_2}$.
\end{lem}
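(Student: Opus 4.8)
The plan is to transcribe the proof of Lemma~\ref{L3.3}, making the one structural change forced by the sign of $p-2$: when $p>2$ the increment of $J_p$ was controlled by the singular factor $(|a|+|b|)^{p-2}|a-b|$, which is of no use for $1<p\le 2$; instead I will use that $J_p$ is $(p-1)$-H\"older continuous, $|J_p(a)-J_p(b)|\le C_p|a-b|^{p-1}$ for all $a,b\in\R$. This in turn follows from Lemma~\ref{L2.4}: writing $J_p(a)-J_p(b)=(p-1)\big(\int_0^1|b+t(a-b)|^{p-2}\dif t\big)(a-b)$, Lemma~\ref{L2.4} bounds the bracket by $c_p(|b|+|a-b|)^{p-2}\le c_p|a-b|^{p-2}$, since $p-2\le 0$ and $|b|+|a-b|\ge|a-b|$.

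First I split $I_3=I_{1,3}+I_{2,3}$ exactly as in Lemma~\ref{L3.3}: with $\hat\delta:=|\abar|^\theta$, the term $I_{1,3}$ is the integral over $\cD_2\cap B_{\hat\delta}$ and $I_{2,3}$ the one over $\cD_2\cap B^c_{\hat\delta}$. Since $|\abar|\to0$ as $L\to\infty$ and $\theta\in(0,1)$, there is $L_0$ so that $\delta<\hat\delta<\tilde\varrho$ for all $L\ge L_0$, and since $|z|\ge\delta$ on $\cD_2$ the test functions reduce to $u$, so both pieces are integrals of $J_p(\triangle u(\xbar,z))-J_p(\triangle u(\ybar,z))$ against $|z|^{-n-sp}\dz$. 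For $I_{1,3}$ I exploit $\Phi(\xbar+z,\ybar+z)\le\Phi(\xbar,\ybar)$, which after the $L\varphi$ terms cancel yields $\triangle u(\xbar,z)-\triangle u(\ybar,z)\ge m_1\triangle\psi(\xbar,z)$: on the set where this difference is nonnegative the integrand is nonnegative by monotonicity of $J_p$, and on the complement $\triangle\psi(\xbar,z)<0$ with $|\triangle u(\xbar,z)-\triangle u(\ybar,z)|\le m_1|\triangle\psi(\xbar,z)|$. Combining the $(p-1)$-H\"older bound for $J_p$, the Taylor estimate $|\triangle\psi(\xbar,z)|\le\kappa(|z|^2+|\nabla\psi(\xbar)||z|)$, the elementary inequality $(x+y)^{p-1}\le x^{p-1}+y^{p-1}$ (valid as $p-1\le1$), and $|\nabla\psi(\xbar)|\le\kappa|\abar|^{\frac{m-1}{m}\upkappa}$ (itself coming from $\psi(\xbar)\le\frac1{m_1}(u(\xbar)-u(\ybar))\le\frac1{m_1}[u]_{\upkappa,\varrho_2}|\abar|^\upkappa$, exactly as in Lemma~\ref{L3.3}), I arrive at
\[
I_{1,3}\ge -C\int_{\cD_2\cap B_{\hat\delta}}\Big(|z|^{2(p-1)}+|\abar|^{\frac{m-1}{m}\upkappa(p-1)}|z|^{p-1}\Big)\frac{\dz}{|z|^{n+sp}},
\]
and passing to polar coordinates produces, up to a constant, the first two terms of the claimed bound.

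For $I_{2,3}$ I estimate directly, using the $(p-1)$-H\"older continuity of $J_p$ together with $|\triangle u(\xbar,z)-\triangle u(\ybar,z)|\le 2[u]_{\upkappa,\varrho_2}|\abar|^\upkappa$, which holds because $\xbar,\ybar,\xbar+z,\ybar+z\in B_{\varrho_2}$ for $z\in B_{\tilde\varrho}$ and $|\xbar-\ybar|=|(\xbar+z)-(\ybar+z)|=|\abar|$:
\[
|I_{2,3}|\le C\int_{\cD_2\cap B^c_{\hat\delta}}|\triangle u(\xbar,z)-\triangle u(\ybar,z)|^{p-1}\,\frac{\dz}{|z|^{n+sp}}\le C|\abar|^{\upkappa(p-1)}\int_{B^c_{\hat\delta}}\frac{\dz}{|z|^{n+sp}}=C|\abar|^{\upkappa(p-1)}\hat\delta^{-sp},
\]
which is precisely the third term $|\abar|^{\upkappa(p-1)-\theta sp}$. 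Adding the two estimates completes the proof.

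The only step that is not a mechanical copy of Lemma~\ref{L3.3} is the inner estimate of $I_{1,3}$: the naive bound $|\triangle u(\xbar,z)-\triangle u(\ybar,z)|^{p-1}\le C|z|^{\upkappa(p-1)}$ on $\cD_2\cap B_{\hat\delta}$ would only produce the exponent $\upkappa(p-1)$ in place of $2(p-1)$, which is too weak to drive the bootstrap in the theorem that follows; so one must use the doubling inequality $\triangle u(\xbar,z)-\triangle u(\ybar,z)\ge m_1\triangle\psi(\xbar,z)$ to trade the increment of $u$ for that of the smooth localizer $\psi$ on the portion of $\cD_2\cap B_{\hat\delta}$ where the integrand can be negative. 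I expect this to be the only genuine point of care; everything else amounts to replacing the quantity $(|a|+|b|)^{p-2}|a-b|$ from the superquadratic argument by $|a-b|^{p-1}$ throughout.
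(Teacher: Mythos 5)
Your proposal is correct and follows essentially the same route as the paper: the same split at $\hat\delta=|\abar|^\theta$, the same use of the doubling inequality $\triangle u(\xbar,z)-\triangle u(\ybar,z)\ge m_1\triangle\psi(\xbar,z)$, the $(p-1)$-H\"older continuity of $J_p$ in place of the superquadratic factor, the Taylor bound on $\triangle\psi$ with $|\nabla\psi(\xbar)|\lesssim|\abar|^{\frac{m-1}{m}\upkappa}$, and the direct $|\abar|^{\upkappa(p-1)-\theta sp}$ bound for the outer piece. The only cosmetic difference is in $I_{1,3}$: the paper applies monotonicity first, replacing $J_p(\triangle u(\xbar,z))$ by $J_p(\triangle u(\ybar,z)+m_1\triangle\psi(\xbar,z))$ and then the bound $|J_p(a+h)-J_p(a)|\le 2|h|^{p-1}$, whereas you split into the regions where the difference of increments is nonnegative or negative — both yield the same estimate $I_{1,3}\ge -C\int|\triangle\psi(\xbar,z)|^{p-1}|z|^{-n-sp}\dz$.
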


\begin{proof}
We compute $I_3$ as in Lemma~\ref{L3.3}. We denote $\hat\delta=|\abar|^\theta$ and write
$$
I_3=\underbrace{\sL_p[\cD_2\cap B_{\hat\delta}] w_1(\xbar)-\sL_p[\cD_2\cap B_{\hat\delta}] w_2(\ybar)}_{I_{1, 3}} +
\underbrace{\sL_p[\cD_2\cap B^c_{\hat\delta}] w_1(\xbar)-\sL_p[\cD_2\cap B^c_{\hat\delta}] w_2(\ybar)}_{I_{2, 3}}.
$$
Since $|J_p(a)-J_p(b)|\leq 2|a-b|^{p-1}$, we get
\begin{align*}
|I_{2,3}| &\leq \int_{\cD_2 \cap B_{\hat\delta}^c}|J_p(\delta^1u(\xbar, z))-J_p(\delta^1u(\xbar, z))|
\\
&\leq C|\abar|^{\kappa(p-1)} \int_{\cD_2 \cap B_{\hat\delta}^c} |z|^{-n-sp}
\\
&\leq C |\abar|^{\kappa(p-1)-\theta sp}
\end{align*}
for some constant $C$, dependent on $\norm{u}_{C^{0,\upkappa}(B_{\varrho_2})}$. Again, since
$$\triangle u(\xbar, z) - \triangle u(\ybar, z)\geq m_1 \triangle \psi(\xbar, z),$$
using monotonicity of $J_p$ we obtain
\begin{align*}
I_{1, 3} &\geq \int_{\cD_2\cap B_{\hat\delta}} J_p(\triangle u(\ybar,z)+m_1\triangle\psi(\xbar,z))\frac{dz}{|z|^{n+sp}} -
 \int_{\cD_2\cap B_{\hat\delta}} J_p(\triangle u(\ybar,z))\frac{dz}{|z|^{n+sp}}
 \\
 &\geq -2 m_1^{p-1} \int_{B^c_\delta\cap B_{\hat\delta}} |\triangle\psi(\xbar,z)|^{p-1}\frac{dz}{|z|^{n+sp}}.
\end{align*}
Using $|\triangle \psi(\xbar, z)|\leq \kappa (|z|^2 + |\abar|^{\frac{m-1}{m}\upkappa}|z|)$ (see Lemma~\ref{L3.3}), the above 
computation leads to
$$I_{1, 3}\geq -C \left[ \int_{\delta}^{\hat\delta} r^{2(p-1)-1-sp} dz+ |\abar|^{\frac{m-1}{m}(p-1)\upkappa}
\int_{\delta}^{\hat\delta} r^{p-2-sp} \dz \right].
$$
This completes the proof.
\end{proof}

Next we estimate $J_2$ and $J_3$.
\begin{lem}\label{L4.2}
Let $p\in (1,2]$ and $q\in[p, \infty)$. Also, assume that $u\in C^{0, \upkappa}(B_{\varrho_2})$ for some $\upkappa\in [0, 1)$. 
Let $\tilde\delta$ be given by Lemma~\ref{L3.5}. Then for any $\theta\in (0, 1)$,  we have $L_0>0$ such that
\begin{equation}\label{EL4.2A}
J_2\geq \left\{
\begin{split}
& -C \left[\int_{\tilde\delta}^{|\abar|^\theta} r^{2(p-1) - 1-sp} dr  + 
|\abar|^{\frac{m-1}{m}\upkappa(p-1)} \int_{\tilde\delta}^{|\abar|^\theta} r^{p-2 -sp} dr + |\abar|^{\upkappa(p-1) -\theta sp}\right]\; \text{for}\; q\leq 2,
\\
&-C \left[\int_\delta^{|\abar|^\theta} r^{\upkappa(q-2) + 1- tq} dr  + 
|\abar|^{\frac{m-1}{m}\upkappa} \int_\delta^{|\abar|^\theta} r^{\upkappa(q-2) -tq} dr + |\abar|^{\upkappa +\theta(\upkappa(q-2)-tq)}\right]\;
\text{for}\; q>2,
\end{split}
\right.
\end{equation}
for all $L\geq L_0$, where the constant $C$ depends on $\upkappa, M, p, s, n, m, m_1$ and the $C^{0, \upkappa}$ norm of $u$ in
$B_{\varrho_2}$. Moreover, if $\xi$ is $\alpha$-H\"{o}lder continuous in the first argument uniformly with respect to the second, then
$$J_3\geq - C|\abar|^\alpha \int_{\tilde\delta}^{\tilde\varrho} r^{\upkappa(q-1)-1-tq} dr,$$
where the constant $C$ depends on $\sup_z\norm{\xi(\cdot, z)}_{C^{0, \alpha}}$ and the $C^{0, \upkappa}$ norm of $u$ in
$B_{\varrho_2}$.
\end{lem}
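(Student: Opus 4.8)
The plan is to treat $J_2$ by the same argument used for $I_3$ in Lemmas~\ref{L3.3} (when $q>2$) and~\ref{L4.1} (when $q\le 2$), and $J_3$ by the argument in Lemma~\ref{L3.6}, carrying along the bounded weight $0\le\xi(\xbar,z)\le M$ and replacing the pair $(p,s)$ by $(q,t)$ where it occurs. First I would observe that on the annulus $\tilde{\cD}_2=B_{\tilde\varrho}\setminus B_{\tilde\delta}$ the modified test functions $\tilde w_1,\tilde w_2$ agree with $u$: since $|z|\ge\tilde\delta$ we have $\xbar+z\notin B_{\tilde\delta}(\xbar)$ and $\ybar+z\notin B_{\tilde\delta}(\ybar)$, while $\tilde w_1(\xbar)=u(\xbar)$ and $\tilde w_2(\ybar)=u(\ybar)$. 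Hence, with $\triangle u(x,z)=u(x)-u(x+z)$,
\begin{gather*}
J_2=\int_{\tilde{\cD}_2}\big(J_q(\triangle u(\xbar,z))-J_q(\triangle u(\ybar,z))\big)\frac{\xi(\xbar,z)}{|z|^{n+tq}}\dz, \\
J_3=\int_{\tilde{\cD}_2}J_q(\triangle u(\ybar,z))\,\frac{\xi(\xbar,z)-\xi(\ybar,z)}{|z|^{n+tq}}\dz.
\end{gather*}
As in the earlier proofs, the doubling inequality $\Phi(\xbar+z,\ybar+z)\le\Phi(\xbar,\ybar)$ (legitimate because $\xbar+z,\ybar+z\in B_{\varrho_2}$ for $z\in B_{\tilde\varrho}$) yields $\triangle u(\xbar,z)-\triangle u(\ybar,z)\ge m_1\triangle\psi(\xbar,z)$ on $\tilde{\cD}_2$, and the bounds $|\triangle\psi(\xbar,z)|\le\kappa(|z|^2+|\nabla\psi(\xbar)|\,|z|)$ and $|\nabla\psi(\xbar)|\le\kappa_5|\abar|^{\frac{m-1}{m}\upkappa}$ (obtained exactly as in Lemma~\ref{L3.3} from $\psi(\xbar)\le m_1^{-1}[u]_{\upkappa,\varrho_2}|\abar|^{\upkappa}$) remain at my disposal. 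I would then split $\tilde{\cD}_2=(\tilde{\cD}_2\cap B_{\hat\delta})\cup(\tilde{\cD}_2\cap B^c_{\hat\delta})$ with $\hat\delta=|\abar|^{\theta}$, which lies strictly between $\tilde\delta$ and $\tilde\varrho$ once $L$ is large.

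For $q>2$ I would reproduce the argument of Lemmas~\ref{L3.3}--\ref{L3.6} with $(q,t)$ in place of $(p,s)$ and the factor $\xi\le M$ carried through; the subquadratic value of $p$ plays no role. On the inner annulus the fundamental theorem of calculus, Lemma~\ref{L2.4}, $\xi\ge 0$ and monotonicity of $J_q$ bound $J_2$ below by $-CM\int_{\tilde{\cD}_2\cap B_{\hat\delta}}(|\triangle u(\xbar,z)|+|\triangle u(\ybar,z)|)^{q-2}|\triangle\psi(\xbar,z)|\,|z|^{-n-tq}\dz$, and then $|\triangle u(\cdot,z)|\le[u]_{\upkappa,\varrho_2}|z|^{\upkappa}$ together with the $\psi$-bound produce the first two integrals; on the outer annulus $|J_q(a)-J_q(b)|\le\kappa_q(|a|+|b|)^{q-2}|a-b|$ and $|\triangle u(\xbar,z)-\triangle u(\ybar,z)|\le 2[u]_{\upkappa,\varrho_2}|\abar|^{\upkappa}$ give the term $|\abar|^{\upkappa+\theta(\upkappa(q-2)-tq)}$. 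For $q\le 2$ I would instead mimic Lemma~\ref{L4.1}: on the inner annulus monotonicity of $J_q$ and $\xi\ge 0$ reduce $J_2$ to $-2m_1^{q-1}M\int_{\tilde{\cD}_2\cap B_{\hat\delta}}|\triangle\psi(\xbar,z)|^{q-1}|z|^{-n-tq}\dz$ via $|J_q(a+c)-J_q(a)|\le 2|c|^{q-1}$, and $|\triangle\psi(\xbar,z)|^{q-1}\le C(|z|^{2(q-1)}+|\abar|^{\frac{m-1}{m}\upkappa(q-1)}|z|^{q-1})$ yields the integrals $\int r^{2(q-1)-1-tq}$ and $|\abar|^{\frac{m-1}{m}\upkappa(q-1)}\int r^{q-2-tq}$; on the outer annulus $|J_q(a)-J_q(b)|\le 2|a-b|^{q-1}$ and $|\triangle u(\xbar,z)-\triangle u(\ybar,z)|\le 2[u]_{\upkappa,\varrho_2}|\abar|^{\upkappa}$ give $-C|\abar|^{\upkappa(q-1)}\int_{\hat\delta}^{\tilde\varrho}r^{-1-tq}\,dr\ge -C|\abar|^{\upkappa(q-1)-\theta tq}$. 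To pass from these $q$-exponent bounds to the $p$-exponent bounds in the statement I would invoke $q\ge p$, $tq\le sp$ and $|\abar|<1$, which give $2(q-1)-1-tq\ge 2(p-1)-1-sp$, $q-2-tq\ge p-2-sp$ and $\upkappa(q-1)-\theta tq\ge\upkappa(p-1)-\theta sp$; since the integration variable ranges in $(0,1)$ and $|\abar|<1$, each $q$-term is dominated by its $p$-counterpart.

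Finally, the estimate of $J_3$ proceeds exactly as in Lemma~\ref{L3.6}: on $\tilde{\cD}_2$ one has $|J_q(\triangle u(\ybar,z))|\le[u]_{\upkappa,\varrho_2}^{q-1}|z|^{\upkappa(q-1)}$, while $\alpha$-H\"older continuity of $\xi$ in its first variable gives $|\xi(\xbar,z)-\xi(\ybar,z)|\le C|\abar|^{\alpha}$, so $J_3\ge -C|\abar|^{\alpha}\int_{\tilde{\cD}_2}|z|^{\upkappa(q-1)-n-tq}\dz=-C|\abar|^{\alpha}\int_{\tilde\delta}^{\tilde\varrho}r^{\upkappa(q-1)-1-tq}\,dr$. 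I expect the only genuinely delicate point to be the exponent bookkeeping in the case $q\le 2$: one must verify that the natural $q$-type bounds really dominate the claimed $p$-type ones — which rests precisely on $q\ge p$, $tq\le sp$ and $|\abar|<1$ — and one must handle the sign of $\triangle\psi(\xbar,z)$ with care when invoking monotonicity of $J_q$, keeping the inequality $\triangle u(\xbar,z)-\triangle u(\ybar,z)\ge m_1\triangle\psi(\xbar,z)$ as is and then bounding the resulting error in absolute value, exactly as in the proofs of Lemmas~\ref{L3.3} and~\ref{L4.1}.
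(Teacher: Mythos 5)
Your proposal is correct and follows essentially the same route as the paper, whose proof of this lemma is simply the observation that the $q\le 2$ case is the proof of Lemma~\ref{L4.1} run with $(q,t)$ in place of $(p,s)$ and the weight $\xi\le M$, followed by the exponent comparisons coming from $q\ge p$, $tq\le sp$, $r<1$, $|\abar|<1$, while the $q>2$ case and the $J_3$ bound are read off from Lemma~\ref{L3.6}. One caveat on the $q>2$ outer annulus: converting $|\abar|^{\upkappa}\int_{\hat\delta}^{\tilde\varrho}r^{\upkappa(q-2)-1-tq}\,dr$ into $|\abar|^{\upkappa+\theta(\upkappa(q-2)-tq)}$ requires $\upkappa(q-2)<tq$, which the hypotheses do not guarantee; the robust step (and what the reduction to Lemma~\ref{L3.6} actually yields) is to first dominate the integrand by $r^{\upkappa(p-2)-1-sp}$ using $q\ge p$, $tq\le sp$, $r<1$, which gives the always-valid term $|\abar|^{\upkappa+\theta(\upkappa(p-2)-sp)}$ (note $\upkappa(p-2)-sp<0$ for $p\le2$) and suffices for every application of the lemma --- this wrinkle is shared by the printed statement itself, so it is a remark rather than a defect of your argument.
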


\begin{proof}
The first inequality in \eqref{EL4.2A} follows from the proof of Lemma~\ref{L4.1} combined with the fact that $p\leq q$ and $tq\leq sp$,
whereas the second one follows from Lemma~\ref{L3.6}.
Estimate $J_3$ follows from Lemma~\ref{L3.6}.
\end{proof}

Now we can state our main result of this section.

\begin{thm}\label{T4.3}
Let $p\in (1,2]$, $p\leq q$ and $\xi$ be $\alpha$-H\"{o}lder continuous. Also, let $f\in C^{0,\beta}(\bar{B}_2)$
and $u\in C(B_2)\cap L^{p-1}_{sp}(\Rn)\cap L^{q-1}_{tq}(\Rn)$ be a viscosity solution to \eqref{E2.1}. Then $u\in C^{0, \gamma}(\bar{B}_1)$ for any $\gamma<\gamma_\circ$ where
$\gamma_\circ=\min\{\frac{sp+\alpha\wedge\beta}{p-1}, 1\}$. Furthermore, when $\frac{sp+\alpha\wedge\beta}{p-1}<1$, we have 
$u\in C^{0, \gamma_\circ}(\bar{B}_1)$, and
when $\frac{sp+\alpha\wedge\beta}{p-1}>1$, we have 
$u\in C^{0, 1}(\bar{B}_1)$.
The H\"{o}lder norm $\norm{u}_{C^{0, \gamma}(B_1)}$, including $\gamma=1$, is bounded above  by a constant dependent on the data $M, A, s, p, t, q, n$.
\end{thm}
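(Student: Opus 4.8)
The plan is to run the same four-step scheme as in the proof of Theorem~\ref{T3.7}, now feeding in the subquadratic interior estimates in place of the superquadratic ones. Since $p\le 2$, the quantity $\frac{sp}{p-2}$ imposes no restriction, so the target exponent is $\gamma_\circ=\min\{1,\frac{sp+\alpha\wedge\beta}{p-1}\}$ and the parabola obstruction of Step~3 of Theorem~\ref{T3.7} is vacuous here (the parabola $\ell$ of~\eqref{AB04} is positive on $[0,1]$ when $p\le 2$). As before, one argues by contradiction on the doubling function $\Phi$ of~\eqref{E2.2}, passes to~\eqref{E2.7}, and estimates each piece: the terms $I_1,I_2,I_4,J_1,J_4$ are bounded via Lemmas~\ref{L3.1},~\ref{L3.2},~\ref{L3.4},~\ref{L3.5}, which are valid for all $p\in(1,\infty)$, and $I_3,J_2,J_3$ via their subquadratic counterparts, Lemmas~\ref{L4.1} and~\ref{L4.2}, which rest on the elementary inequality $|J_p(a)-J_p(b)|\le 2|a-b|^{p-1}$ rather than on Lemma~\ref{L2.4}.

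\textbf{Steps 1 and 2.} First I would bootstrap to $u\in C^{0,\gamma}_{\rm loc}(B_2)$ for every $\gamma<\breve\gamma=\min\{1,\frac{sp}{p-1}\}$: assuming $u\in C^{0,\upkappa}(B_{\varrho_2})$ with $\upkappa<\gamma$, run~\eqref{E2.7} with $\varphi=\varphi_{\upkappa_1}$, $\delta=\tilde\delta=\varepsilon_1|\abar|$ and
\[
\upkappa_1=\min\Big\{\gamma,\ \upkappa+\tfrac{\alpha}{p-1},\ \upkappa+\tfrac{sp}{2(p-1)}\Big\};
\]
fixing $\varepsilon_1$ small gives $I_1+I_2+J_1\ge\tfrac{C}{2}L^{p-1}|\abar|^{\upkappa_1(p-1)-sp}$, and then one picks $m\ge 3$ large and $\theta\in(0,1)$ so that all terms produced by Lemmas~\ref{L4.1},~\ref{L4.2},~\ref{L3.4} are $\ge -C_{\varepsilon_1}|\abar|^{\upkappa_1(p-1)-sp}$; since $\upkappa_1(p-1)-sp<0$ this is impossible for $L$ large, whence $\Phi\le 0$ and $u\in C^{0,\upkappa_1}(\bar{B}_{\varrho_1})$, and iterating exhausts all $\gamma<\breve\gamma$. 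To reach the endpoint: if $\breve\gamma<1$ (so $sp<p-1$), rerun with a fixed $\upkappa<\breve\gamma$ close enough to $\breve\gamma$ that the integrals $\int r^{2(p-1)-1-sp}\dr$ and $\int r^{p-2-sp}\dr$ are harmless on the relevant range and the line $\upkappa_1=\breve\gamma$ closes; if $\breve\gamma=1$, switch to the Lipschitz profile $\tilde\varphi$ and the log-refined estimates Lemmas~\ref{L3.1}(ii),~\ref{L3.2},~\ref{L3.5}, exactly as in Step~2 of Theorem~\ref{T3.7}, to get $u\in C^{0,1}_{\rm loc}(B_2)$.

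\textbf{Steps 3 and 4.} With $u\in C^{0,\breve\gamma}_{\rm loc}(B_2)$ in hand, and only the case $\breve\gamma<1$ remaining, a second bootstrap starting from $\upkappa=\frac{sp}{p-1}$ (so $\upkappa(p-1)-sp\ge 0$ and $\upkappa(q-1)-tq\ge 0$, using $\frac{sp}{p-1}\ge\frac{tq}{q-1}$) and with the same $\upkappa_1$ drives the exponent up to any $\gamma<\gamma_\circ$; here $\upkappa(q-1)\ge tq$ makes the $J_3$-integral of Lemma~\ref{L4.2} at worst logarithmically singular, contributing $C|\abar|^{\alpha}|\log(\varepsilon_1|\abar|)|$, which is negligible because $\upkappa_1(p-1)-sp<\alpha\wedge\beta\wedge\upkappa$ (this uses $\gamma<\gamma_\circ$ and the caps in $\upkappa_1$). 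When $\gamma_\circ=\frac{sp+\alpha\wedge\beta}{p-1}<1$, choosing $\upkappa$ slightly below $\gamma_\circ$ with $\max\{\gamma_\circ(p-1)-sp,\frac{tq}{q-1}\}<\upkappa<\gamma_\circ$ makes $\upkappa(q-1)-1-tq>-1$, so $J_3\ge -C|\abar|^\alpha$ and, since $\gamma_\circ(p-1)-sp=\alpha\wedge\beta<\upkappa$, the endpoint $\upkappa_1=\gamma_\circ$ is attained. Finally, if $\frac{sp+\alpha\wedge\beta}{p-1}>1$, one runs the Lipschitz-profile argument of Step~4 of Theorem~\ref{T3.7} with the subquadratic $I_3,J_2$ of Lemmas~\ref{L4.1}--\ref{L4.2} to obtain $u\in C^{0,1}_{\rm loc}(B_2)$. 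In every case the relevant $C^{0,\gamma}(\bar{B}_1)$ norm is the first value of $L$ making $\Phi\le 0$ on $B_2\times B_2$, which is explicit in $M,A,s,t,p,q,n$, $\norm{\xi}_{C^{0,\alpha}}$ and $\norm{f}_{C^{0,\beta}}$; running the argument with $\varrho_1=1$ yields the stated bound on $\bar{B}_1$. Combining with Theorem~\ref{T3.7} gives Theorem~\ref{Tmain-1}, and Theorem~\ref{T1.2} then follows from the equivalence of weak and viscosity solutions.

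\textbf{The main obstacle.} The one genuinely delicate point is the exponent bookkeeping forced by Lemma~\ref{L4.1}: one must dominate $\int_\delta^{|\abar|^\theta}r^{2(p-1)-1-sp}\dr$, $|\abar|^{\frac{m-1}{m}\upkappa(p-1)}\int_\delta^{|\abar|^\theta}r^{p-2-sp}\dr$ and $|\abar|^{\upkappa(p-1)-\theta sp}$ (together with the analogous $J_2$ terms and the $J_3,I_4,J_4$ contributions) by $L^{p-1}|\abar|^{\upkappa_1(p-1)-sp}$. Since $p-2-sp<0$ for all admissible parameters, the middle integral is genuinely singular as $\delta\to0$ and is controlled only by enlarging the localization power $m$; the first integral changes regime with the sign of $2(p-1)-sp$; and the last forces $\theta$ to be small relative to $sp-(\upkappa_1-\upkappa)(p-1)$, which is precisely why $\upkappa_1-\upkappa$ must be capped below $\frac{sp}{p-1}$. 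Choosing $m$, $\theta$ and $\varepsilon_1$ simultaneously --- and, in the endpoint Steps~2 and~4, keeping the extra $\log^2|\abar|$ factors of the Lipschitz profile subcritical --- so that all these exponents stay on the correct side of $\upkappa_1(p-1)-sp<\min\{\alpha\wedge\beta,\upkappa\}$ is where the real (if routine) work lies.
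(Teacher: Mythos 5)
Your proposal follows essentially the same route as the paper's proof: the same doubling function \eqref{E2.2}, the same viscosity decomposition \eqref{E2.7}, the same Lemmas~\ref{L3.1}, \ref{L3.2}, \ref{L3.4}, \ref{L3.5} for $I_1,I_2,I_4,J_1,J_4$ and Lemmas~\ref{L4.1}--\ref{L4.2} for $I_3,J_2,J_3$, and the same four-step bootstrap (up to $\breve\gamma$, endpoint $\breve\gamma$, up to $\gamma_\circ$ with endpoint, Lipschitz via the profile $\tilde\varphi$). Your bootstrap increments differ slightly from the paper's (you drop the caps $\upkappa+\frac{1}{2(q-1)}$, $\upkappa+\frac{\alpha}{2(q-1)}$, $\upkappa+\frac{sp(1-\gamma)}{2(p-1)-sp}$ and $\upkappa+\uprho$), but if one compares all error exponents directly with $\upkappa_1(p-1)-sp$ and is allowed to choose the splitting exponent $\theta$ separately for $I_3$ and $J_2$, your caps $\upkappa_1\le\min\{\gamma,\upkappa+\frac{\alpha}{p-1},\upkappa+\frac{sp}{2(p-1)}\}$ do suffice, so the argument closes.

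The one point you gloss over, and justify with the wrong object, is the superquadratic $q$-phase in Steps~3--4. You declare the parabola obstruction ``vacuous'' because $\ell$ of \eqref{AB04} is positive on $[0,1]$ for $p\le2$; but $\ell$ never enters the subquadratic proof. The genuine issue is the $q>2$ branch of Lemma~\ref{L4.2}: once $\upkappa\ge\frac{sp}{p-1}$, the first and third terms of the $J_2$ bound impose the constraints $\theta(\upkappa(q-2)+2-tq)\ge\upkappa_1(p-1)-sp$ and $\upkappa+\theta(\upkappa(q-2)-tq)\ge\upkappa_1(p-1)-sp$, whose simultaneous solvability in $\theta$ is equivalent to $2(\upkappa_1-\upkappa)(p-1)\le\ell_q(\upkappa)$ with $\ell_q(y)=y^2(q-2)-[tq+2(p-2)]y+2sp$ --- exactly the quadratic the paper introduces, together with the cap $\upkappa+\uprho$. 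Your increments happen to respect this because, for $p\le2$, $tq\le sp$ and $\upkappa<1$, one has $\ell_q(\upkappa)\ge 2sp-\upkappa tq\ge sp\ge 2(\upkappa_1-\upkappa)(p-1)$; but this verification is precisely the new work in the paper's Step~3 and must be carried out, not dismissed. Two minor slips: at the endpoint $\gamma_\circ=\frac{sp+\alpha\wedge\beta}{p-1}<1$ the contradiction requires $\upkappa(p-1)>\alpha\wedge\beta$ (not merely $\upkappa>\alpha\wedge\beta$), which your ``$\upkappa$ slightly below $\gamma_\circ$'' does deliver since $\gamma_\circ(p-1)=sp+\alpha\wedge\beta$; and in Step~2 with $\breve\gamma=1$ the bookkeeping is not ``exactly as in Step~2 of Theorem~\ref{T3.7}'' but must be redone with the subquadratic forms of Lemmas~\ref{L4.1}--\ref{L4.2}, as the paper does.
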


\begin{proof}
We broadly follow the strategy of Theorem~\ref{T3.7} and divide the proof into multiple steps as follows:
\begin{itemize}
\item[Step 1] --We show $u\in C^{0, \gamma}_{\rm loc}(B_2)$ for any $\gamma<\min\{1, \frac{sp}{p-1}\}=\breve{\gamma}$.
\item[Step 2]-- We show that $u \in C^{0, \breve\gamma}_{\rm loc}(B_2)$.
\item[Step 3]-- We show that $u \in C^{0, \gamma}_{\textit{\rm loc}}(B_2)$ for any $\gamma<\gamma_\circ$ and
 when $\gamma_{\circ}=\frac{sp+ \alpha \wedge \beta}{p-1} < 1$, we have $u \in C^{0, \gamma_\circ}_{\textit{\rm loc}}(B_2)$.
\item[Step 4]-- Assuming $\frac{sp+ \alpha \wedge \beta}{p-1} >1$, we show that $u \in C^{0,1}_{\rm loc}(B_2)$.
\end{itemize}
\medskip

\noindent{\bf Step 1.} Let $\breve\gamma=\min\{1, \frac{sp}{p-1}\}$. We show that $u\in C^{0, \gamma}_{\rm loc}(B_2)$ for any $\gamma<\breve\gamma$.
We apply a bootstrapping argument. Fix $1\leq\varrho_1<\varrho_2< 2$ and also assume $u\in C^{0, \upkappa}(B_{\varrho_2})$ for some
$\upkappa\in [0, \gamma)$. 
Set
$$\upkappa_1=\min\{\gamma, \upkappa + \frac{sp}{2(p-1)}, \upkappa + \frac{1}{2(q-1)}, \upkappa + \frac{\alpha}{p-1}\}.$$
With $\varphi=\varphi_{\upkappa_1}$ from \eqref{varphi} we consider the doubling function $\Phi$ in \eqref{E2.2}.  
We set $\delta=\tilde\delta=\varepsilon_1|\abar|$. Using Lemma~\ref{L3.1}, ~\ref{L3.2} and ~\ref{L3.5}, we can choose
$\varepsilon_1$ small enough, independent of $L$, so that
\begin{equation}\label{ET4.3A}
I_1+I_2+J_1\geq \frac{C}{2} L^{p-1} |\abar|^{\upkappa_1(p-1)-sp}
\end{equation}
for all $L\geq L_0$. We choose $\theta\in (0, 1)$ small enough and $m$ large enough so that $\frac{m-1}{m}\upkappa(p-1)-\theta sp> \upkappa_1(p-1)-sp$. Then
 from Lemma~\ref{L4.1} we have
\begin{align*}
&\int_\delta^{|\abar|^\theta} r^{2(p-1) - 1-sp} dr  + 
|\abar|^{\frac{m-1}{m}\upkappa(p-1)} \int_\delta^{|\abar|^\theta} r^{p-2 -sp} dr + |\abar|^{\upkappa(p-1) -\theta sp}
\\
& \leq \int_\delta^1  r^{\upkappa_1(p-1)-sp-1} dr 
+|\abar|^{\frac{m-1}{m}\upkappa(p-1)} \int_\delta^{|\abar|^\theta} r^{-1 -sp} dr
+ |\abar|^{\upkappa_1(p-1) - sp}
\\
&\leq C \varepsilon_1^{-sp} 
|\abar|^{\upkappa_1(p-1) - sp}.
\end{align*}
Thus we get
$$I_3\geq - C \varepsilon_1^{-sp} |\abar|^{\upkappa_1(p-1) - sp}.$$
Now if $q\leq 2$, then using the bound in Lemma~\ref{L4.2} and the fact $p\leq q, tq\leq sp$ we would get
$$J_2\geq - C \varepsilon_1^{\upkappa_1(q-1)-tq} |\abar|^{\upkappa_1(p-1) - sp}.$$
Let us now assume that $q>2$. In this case, we also
choose $m$ large enough so that
$$\upkappa(q-2) + 1>\upkappa_1(q-1)-\frac{m-1}{m}\upkappa.$$
Now, we set $\theta$ small enough so that $\upkappa +\theta(\upkappa(q-2)-tq)>\upkappa_1(p-1)-sp$. This gives us
$$|\abar|^{\upkappa +\theta(\upkappa(q-2)-tq)}\leq |\abar|^{\upkappa_1(p-1)-sp}.$$
Now repeating the calculation in Step 1 of Theorem~\ref{T3.7}, with $(q, t)$ in place of $(p, s)$, gives
\begin{align*}
|\abar|^{\frac{m}{m-1}\upkappa}\int_{\delta}^{|\abar|^\theta} r^{\upkappa (q-2)-tq}dr\leq 
& C \varepsilon_1^{\upkappa_1 (q-1)-\frac{m}{m-1}\upkappa -t q} |\abar|^{\upkappa_1 (q-1)-tq}
\leq C \varepsilon_1^{\upkappa_1 (q-1)-\frac{m}{m-1}\upkappa -t q} |\abar|^{\upkappa_1 (p-1)-sp}
\\
\int_\delta^{|\abar|^\theta} r^{\upkappa(q-2) + 1-tq} dr &\leq C \varepsilon_1^{\upkappa_1(q-1)-tq}|\abar|^{\upkappa_1(q-1)-tq}
\leq C \varepsilon_1^{\upkappa_1(q-1)-tq} |\abar|^{\upkappa_1(p-1)-sp}
\end{align*}
for some constant $C$, dependent on $m, \upkappa, t, q$. Hence, for some constant $C_{\varepsilon_1}$ we would have
$$J_2\geq -C_{\varepsilon_1}|\abar|^{\upkappa_1(p-1)-sp}.$$
Again,
\begin{align*}
J_3 &\geq - C|\abar|^\alpha \int_{\tilde\delta}^{\tilde\varrho} r^{\upkappa(q-1)-1-tq} dr
\\
&\geq - C|\abar|^\alpha \int_{\tilde\delta}^{\tilde\varrho} r^{\upkappa(p-1)-1-sp} dr 
\\
&\geq  -C \varepsilon_1^{\upkappa(p-1)-sp} |\abar|^{\upkappa_1(p-1)-sp}.
\end{align*}
Now since $|I_4|+|J_4|\leq C$, by Lemma~\ref{L3.4}, plugging in these estimates in \eqref{E2.7} we obtain
$$C\geq |\abar|^{\upkappa_1(p-1)-sp} (\frac{C}{2}L^{p-1}-\tilde{C}_{\varepsilon_1})\geq |\abar|^{\gamma(p-1)-sp} (\frac{C}{2}L^{p-1}-\tilde{C}_{\varepsilon_1})$$
for some constant $\tilde{C}_{\varepsilon_1}$ (dependent on $\varepsilon_1$) and for all $L\geq L_0$. This is a contradiction and rest of the argument 
follows as before. This proves that $u\in C^{0, \upkappa_1}(\bar{B}_{\varrho_1})$ and by a standard bootstrapping argument we would have
$u\in C^{0,\gamma}_{\rm loc}(B_2)$ for any $\gamma<\breve\gamma$.

\medskip

\noindent{\bf Step 2.} We prove that $u\in C^{0, \breve\gamma}_{\rm loc}(B_2)$. From Step 1 we know that $u\in C^{0, \gamma}_{\rm loc}(B_2)$ for any $\gamma<\breve\gamma$.
First we suppose $\breve\gamma=\frac{sp}{p-1}<1$, implying $\frac{tq}{q-1}<1$. For $q\leq 2$, the proof follows from Step 1. So we suppose that $q>2$.
 We choose $\upkappa<\breve\gamma$ 
large enough so that $\upkappa(q-2) -tq>-1$ and $\breve\gamma-\upkappa<\frac{\alpha}{q-1}$. 
Note that \eqref{ET4.3A} holds with $\breve\gamma$ in place of $\upkappa_1$. Estimates of $I_3, I_4, J_3, I_4$ remain same as in Step 1. Because 
of our choice in $\upkappa$ we would also have $J_2\geq -C$. Now the proof can be completed as done before.

Next we suppose $\breve\gamma=1$, equivalently, $\frac{sp}{p-1}\geq 1$. Applying Step 1, we can choose $\upkappa\in (\frac{p-1}{p}, 1)$ large enough such that $\upkappa(q-1) + 1 - tq>0$,
$\alpha + \upkappa(q-1)>(q-1)$, $\alpha + \upkappa(p-1)>(p-1)$ and $C^{0, \upkappa}(\bar{B}_{\varrho_2})$.
Now choose $m\geq 3$ large enough so that 
$$\frac{m-1}{m}\upkappa+ \upkappa(q-2) + 1 - tq>0\quad \text{and}\quad \frac{m-1}{m}\upkappa + (p-1)\upkappa>(p-1).$$
The function $\varphi$ is chosen to be Lipschitz profile function as defined in Lemma~\ref{L3.1}.
We let 
$\delta_0=\delta_1(\log^2|\abar|)^{-1}$ from Lemma~\ref{L3.1}, $\delta =\varepsilon_1 (\log^{2\rho}(|\abar|))^{-1}|\abar|$ from Lemma~\ref{L3.2}
and $\tilde\delta=\varepsilon_1 (\log^{2\tilde\rho}(|\abar|))^{-1}|\abar|$ from Lemma~\ref{L3.5}. Then for a small but fixed $\varepsilon_1$ we get
$$I_1+I_2+J_1\geq \frac{C}{2} L^{p-1} |\abar|^{p-1-sp} (\log^2(|\abar|))^{-\uptheta}$$
for all $L\geq L_0$. Since $(p-1)< (p-1)\upkappa + sp$, we choose $\theta\in (0, 1)$ small enough so that $(p-1-sp)<\upkappa(p-1)- \theta sp$. 
Now, we see that 
$$|\abar|^{\frac{m-1}{m}\upkappa(p-1)} \int_\delta^{|\abar|^{\theta}} r^{p-2-sp} dr \leq C |\abar|^{\frac{m-1}{m} \upkappa(p-1)} [ |\log(\delta)|1_{\{sp=p-1\}} + \delta^{p-1-sp}1_{\{sp>p-1\}}].$$
Form the definition of $\delta$ and the fact that $|\abar|\to 0$ as $L\to \infty$, we can find $L_0, \eta_1>0$ so that 
$$|\abar|^{\frac{m-1}{m}\upkappa(p-1)} \int_\delta^{|\abar|^{\theta}} r^{p-2-sp} dr\leq C_{\varepsilon_1} |\abar|^{p-1-sp+\eta_1}$$
for $L\geq L_0$, where the constant $C_{\varepsilon_1}$ depends on $\varepsilon_1$. On the other hand, if $2(p-1)-sp>0$ we get
$$\int_\delta^{|\abar|^\theta} r^{2(p-1) - 1-sp} dr\leq C |\abar|^{\theta(2(p-1)-sp)},$$
and if $2(p-1)-sp\leq 0$, we have $(p-1)-sp + \frac{p-1}{2}< 0$, giving us
$$\int_\delta^{|\abar|^\theta} r^{2(p-1) - 1-sp} dr\leq \int_\delta^{1} r^{(p-1)+\frac{p-1}{2} - 1-sp} dr\leq C_{\varepsilon_1}|\abar|^{p-1-sp +\eta_2}$$
for some positive $\eta_2$ and $L\geq L_0$, provided we choose $L_0$ large enough. Thus, combining these estimates
in Lemma~\ref{L4.1}, we find $\eta>0$ such that
$$I_3\geq -C_{\varepsilon_1}|\abar|^{p-1-sp +\eta}.$$
Note that, by Lemma~\ref{L4.2}, $J_2$ also satisfies a similar estimate if $q\leq 2$. For $q>2$ the estimates in Step 2 of Theorem~\ref{T3.7} goes through, giving us for some $\eta_3>0$ that
$$J_2\geq -C |\abar|^{\eta_3}\geq - C|\abar|^{\eta_3 + p-1-sp}.$$
Also,
\begin{align*}
J_3 &\geq - C|\abar|^\alpha \int_{\tilde\delta}^{\tilde\varrho} r^{\upkappa(q-1)-1-tq} dr
\\
&\geq - C|\abar|^\alpha \int_{\tilde\delta}^{\tilde\varrho} r^{\upkappa(p-1)-1-sp} dr \geq - C_{\varepsilon_1} |\abar|^{p-1-sp-\eta_4}
\end{align*}
for some $\eta_4>0$ and $L\geq L_0$ for some large $L_0$, where we use the fact $\alpha+\upkappa(p-1)>p-1$. Thus, combining this estimates and using Lemma~\ref{L3.4} in \eqref{E2.7} gives us
$$\frac{C}{2} L^{p-1} |\abar|^{p-1-sp} (\log^2(|\abar|))^{-\uptheta}\leq C_{\varepsilon_1}(|\abar|^{p-1-sp +\eta\wedge\eta_3\wedge\eta_4}, |\abar|^{\alpha\wedge\beta}, |\abar|^{\upkappa(p-1)})$$
for all $L\geq L_0$, where $L_0$ is some large number chosen according to the above estimates. Since $p-1-sp\leq 0$ and $|\abar|\to 0$ as $L\to\infty$, the above estimate can not hold for large enough $L$.
Thus contradiction proves that $u\in C^{0, 1}(\bar{B}_{\varrho_1})$.

\medskip

\noindent{\bf Step 3.} Fix $\gamma<\gamma_\circ$. We prove that $u\in C^{0, \gamma}_{\rm loc}(B_2)$. In view of Step 2 above, we assume that $\gamma\in (\breve\gamma, \gamma_\circ)$
and $\breve\gamma<1$. We apply a bootstrapping argument similar to Step 3 of Theorem~\ref{T3.7}. Note that, due to Step 2, we can start the bootstrapping with
$\upkappa = \frac{sp}{p-1}$.  Also, we have $\upkappa(p-1)-sp\geq 0$. We need the following parabola (see \eqref{AB04}) for $q>2$
$$\ell_q(y)=y^2(q-2)-[tq+2(p-2)]y +2sp$$
Since $sp\geq tq$ and $p \leq 2$ we have
$$\ell_q(y)=y^2(q-2)+2y(2-p)+[2sp-y\, tq]>0$$
whenever $y\in [ 0, 2)$. In this case, we define
$$\uprho=\frac{1}{4(p-1)}\inf_{y \in (0,\gamma)}\ell_q(z).$$
Now let $\upkappa_1=\min \{\gamma, \upkappa + \frac{sp(1-\gamma)}{2(p-1)-sp}, \upkappa+ \frac{sp}{2} , \upkappa +\frac{\alpha}{2(q-1)},  \upkappa +\uprho \}$.
Choose $m$ large enough such that $\upkappa_1(p-1)-sp < \frac{m-1}{m}\upkappa(p-1)$ and $\frac{q}{2m(p-1)}<\uprho$.
We set $\delta=\tilde\delta=\varepsilon_1|\abar|$. Using Lemma~\ref{L3.1}, ~\ref{L3.2} and ~\ref{L3.5}, we can choose
$\varepsilon_1$ small enough, independent of $L$, so that
\begin{equation}\label{ET4.3B}
I_1+I_2+J_1\geq \frac{C}{2} L^{p-1} |\abar|^{\upkappa_1(p-1)-sp}
\end{equation}
for all $L\geq L_0$. Since $sp<p-1$, from Lemma~\ref{L4.1} we obtain
$$I_3\geq -C \left[|\abar|^{\theta(2(p-1)-sp)} + |\abar|^{\frac{m-1}{m}\upkappa(p-1) + \theta((p-1)-sp)} + |\abar|^{\upkappa(p-1)-\theta sp}\right]$$
for $\theta\in (0, 1)$. Now we set $\theta=\frac{\upkappa_1(p-1)-sp}{2(p-1)-sp}\leq \frac{\upkappa(p-1)}{2(p-1)-sp}=\frac{\upkappa}{2-\nicefrac{sp}{p-1}}<1$.
Again, since
$$\upkappa_1 \leq \upkappa + sp \frac{1-\gamma}{2(p-1)-sp}< \upkappa + \frac{sp}{p-1}\frac{2(p-1)-\upkappa_1(p-1)}{2(p-1)-sp}=\upkappa + \frac{sp}{p-1}(1-\theta),$$
we get $\upkappa_1(p-1)-sp< \upkappa(p-1) -\theta sp$. Also, by our choice of $m$,
$$(\upkappa_1(p-1)-sp)-\theta (p-1-sp)=(\upkappa_1(p-1)-sp)\frac{p-1}{2(p-1)-sp}< (\upkappa_1(p-1)-sp)< \frac{m-1}{m}\upkappa(p-1),$$
implying
$$ \frac{m-1}{m}\upkappa(p-1)+ \theta ((p-1)-sp)> (\upkappa_1(p-1)-sp).$$
These inequalities lead to $I_3\geq -C |\abar|^{\upkappa_1(p-1)-sp}$. When $q\leq 2$, using \eqref{EL4.2A}, we see that $J_2$ also has a similar lower bound.
So we estimate $J_2$ when $q>2$. We set $\theta = \frac{\upkappa_1(p-1)-sp}{\upkappa(q-2)-tq+2}$. Since $2> sp\geq tq$ and
$\upkappa_1> \upkappa\geq \frac{sp}{p-1}$, we have $\theta>0$. Moreover, $\upkappa_1(p-1)-sp \leq  \upkappa_1(q-1)-tq
 < \upkappa(q-2)-tq +2$. Thus, $\theta\in (0, 1)$. 
From the definition of $\uprho$ we have
\begin{align*}
\frac{1}{p-1}\left[\frac{(m-1)\upkappa}{2m}(\upkappa(q-2)+2-tq)+sp\right]-\upkappa
&=\frac{1}{2(p-1)}\ell_q(\upkappa) -\frac{1}{2m(p-1)} \upkappa(\upkappa(q-2)+2-tq)
\\
&\geq  2\uprho- \frac{q}{2m(p-1)}>\uprho,
\end{align*}
giving us
$$\upkappa_1< \frac{1}{p-1}\left[\frac{(m-1)\upkappa}{2m}(\upkappa(q-2)+2-tq)+sp\right]\Leftrightarrow
\frac{m-1}{m}\upkappa + \theta[\upkappa(q-2)-tq]> \upkappa_1 (p-1)-sp.$$
Since $\upkappa\geq \frac{sp}{p-1}\geq \frac{tq}{q-1}\Rightarrow \upkappa(q-2) - tq\geq -\upkappa>-1$,  we obtain
\begin{align*}
|\abar|^{\frac{m-1}{m}\upkappa} \int_\delta^{|\abar|^\theta} r^{\upkappa(q-2) -tq} dr\leq C |\abar|^{\frac{m-1}{m}\upkappa+\theta(\upkappa(q-2) -tq+1)}
\leq C |\abar|^{\upkappa_1(p-1)-sp},
\end{align*}
and
$$
|\abar|^{\upkappa +\theta(\upkappa(q-2)-tq)}\leq |\abar|^{\upkappa_1(p-1)-sp}.
$$
Thus, we have $J_2\geq -C |\abar|^{\upkappa_1(p-1)-sp}$. The term $J_3$ can be estimated as follows
\begin{align*}
J_3 &\geq - C|\abar|^\alpha \int_{\tilde\delta}^{\tilde\varrho} r^{\upkappa(q-1)-1-tq} dr
\\
&\geq -C|\abar|^\alpha (|\abar|^{\upkappa(q-1)-tq}1_{\{\upkappa(q-1)-tq>0\}} + |\log(\tilde\delta)|1_{\{\upkappa(q-1)-tq=0\}})
\\
&\geq -C_{\varepsilon_1} |\abar|^{\frac{\alpha}{2}+\upkappa(q-1)-tq}\geq -C_{\varepsilon_1} |\abar|^{\upkappa_1(q-1)-tq}
\geq -C_{\varepsilon_1} |\abar|^{\upkappa_1(p-1)-sp}
\end{align*}
for all $L\geq L_0$, where $C_{\varepsilon_1}$ depends on $\varepsilon_1$.
Now combining \eqref{ET4.3B} with the above estimates and Lemma~\ref{L3.4} in \eqref{E2.7} gives us
$$\left(\frac{C}{2} L^{p-1} -C_{\varepsilon_1}\right)|\abar|^{\upkappa_1(p-1)-sp}\leq C \max\{|\abar|^{\alpha\wedge\beta}, |\abar|^{\upkappa(p-1)}\}$$
for all $L\geq L_0$. Since by our assumption $\upkappa_1(p-1)-sp< \min\{\alpha\wedge\beta, \upkappa(p-1)\}$, the above inequality can not hold for 
large $L$. Hence we have a contradiction and arguing as before we get $u\in C^{0, \upkappa_1}(\bar{B}_{\varrho_1})$. By a standard bootstrapping
argument we then get $u\in C^{0, \gamma}_{\rm loc}(B_2)$.

Suppose $\gamma_{\circ}=\gamma=\frac{sp+ \alpha \wedge \beta}{p-1} < 1$.
Choose $\upkappa=\frac{sp+ \alpha \wedge \beta-\epsilon}{p-1} $ for $\epsilon \in (0, sp)$ small. Then 
$\upkappa(p-1) =\alpha \wedge \beta +sp-\epsilon >\alpha \wedge \beta$ by our choice of $\epsilon$.
We can repeat the above calculations with $\upkappa_1=\gamma$ leading to, after gathering them in \eqref{E2.7},
$$\left(\frac{C}{2} L^{p-1} -C_{\varepsilon_1}\right)|\abar|^{\gamma(p-1)-sp}\leq C \max\{|\abar|^{\alpha\wedge\beta}, |\abar|^{\upkappa(p-1)}\}$$
Since $\alpha \wedge \beta =\gamma(p-1)-sp < \upkappa(p-1)$
and $|\abar|\to 0$ as $L\to \infty$, this leads to a contradiction for a large enough $L$. Therefore, $u\in C^{0,\gamma}_{\rm loc}(B_2)$.

\medskip

\noindent{\bf Step 4.} Let $\frac{sp+\alpha\wedge\beta}{p-1}>1$. We show that $u\in C^{0, 1}(\bar{B}_1)$. 
If $ \frac{sp}{p-1}\geq 1$ the proof follows from Step 2 . So we assume $\frac{sp}{p-1}<1$. Consider the doubling
function $\Phi$ with $\varphi$ being the Lipschitz profile function given by \eqref{varphi} and in Lemma~\ref{L3.1}. From Step 3 we know that
$u\in C^{0, \upkappa}_{\rm loc}(B_2)$ for all $\upkappa\in (0, 1)$. For our purpose we fix any $\upkappa\in (\frac{sp}{p-1}, 1)$ and
$m\geq 3$ large such that 
$$\upkappa(p-1) > p-1-sp \quad \text{and}\quad (p-1)-sp<\frac{m}{m-1}\upkappa.$$
We repeat the scheme described in section~\ref{S-genstr} with $\varrho_2=\frac{3}{2}$
and $\varrho_1=1$. As before, we suppose
$\sup_{B_2\times B_2}\Phi>0$. 
We set the notation
\begin{gather*}
\delta_0=\eta_0=\varepsilon_1 (\log^2(|\abar|))^{-1}, \quad
\uptheta = \frac{n+1}{2}+p-sp, \quad
\tilde\rho=\frac{\frac{n+1}{2}+q-tq}{q-tq}, 
\\
\rho=\frac{\frac{n+1}{2}+p-sp}{p-sp},
\quad
\cone=\{z\in B_{\delta_0|\abar|} \; :\; |\langle \abar, z\rangle| \geq (1-\eta_0)|\abar||z|\},
\\
\cD_1=B_\delta\cap \cone^c\quad \cD_2= B_{\frac{1}{8}}\setminus(\cD_1\cup\cone).
\end{gather*}
Using Lemma~\ref{L3.1}(ii), Lemma~\ref{L3.2}(ii) and Lemma~\ref{L3.5}, and by choosing $\varepsilon_1$ small enough independent of $L$ we get,
 $$I_1 +I_2 + J_1 \geq \frac{C}{2} L^{p-1} |\abar|^{p-1-sp} (\log^2(|\abar|))^{-\uptheta},$$
 for all $L\geq L_0$, for some constant $L_0$.  From our calculations in Step 3, it can be easily seen that for some small $\eta>0$ the following
 hold
 \begin{gather*}
 \theta:=\frac{(p-1)-sp+\eta}{2(p-1)-sp}\in (0, 1), \quad \frac{m-1}{m}\upkappa(p-1)+ \theta ((p-1)-sp)> (p-1)-sp + \eta
 \\
 \upkappa(p-1) -\theta sp>  p-1-sp + \eta.
 \end{gather*}
 From Lemma~\ref{L4.1}, we then get  $I_3\geq -C |\abar|^{p-1-sp +\eta}$. Also, if $q\leq 2$, from Lemma~\ref{L4.2} we have
 $J_2\geq -C |\abar|^{p-1-sp +\eta}$. Suppose that $q>2$. Again, we can repeat the calculations in step 3 to find
 $\eta_1>0$ so that $J_2\geq -C |\abar|^{p-1-sp +\eta_1}$. The term $J_3$ can be estimated as follows
\begin{align*}
J_3 &\geq - C|\abar|^\alpha \int_{\tilde\delta}^{\frac{1}{8}} r^{\upkappa(q-1)-1-tq} dr
\\
&\geq - C|\abar|^\alpha \int_{\tilde\delta}^{\tilde\varrho} r^{\upkappa(p-1)-1-sp} dr
\geq -C |\abar|^{\alpha}.
\end{align*}
Now using Lemma~\ref{L3.4} together with the above estimates in \eqref{E2.7} leads to
$$ \frac{C}{2} L^{p-1} |\abar|^{p-1-sp} (\log^2(|\abar|))^{-\uptheta}\leq C_1 \max\{|\abar|^{\alpha\wedge\beta}, |\abar|^{\upkappa(p-1)},
|\abar|^{(p-1-sp)+\eta\wedge\eta_1}\}$$
for all $L\geq L_0$. Since $p-1-sp<\min\{\alpha\wedge\beta, \upkappa(p-1)\}$ and $|\abar|\to 0$ as $L\to\infty$, the above inequality can not hold
for large enough $L$, leading to a contradiction. Hence, arguing as before, we must have $u\in C^{0, 1}(\bar{B}_1)$.
This completes the proof.
\end{proof}

Now we prove Theorems~\ref{Tmain-1} and ~\ref{T1.2}

\begin{proof}[Proof of Theorem~\ref{Tmain-1}]
The proof follows from Theorems~\ref{T3.7} and ~\ref{T4.3}.
\end{proof}

\begin{proof}[Proof of Theorem~\ref{T1.2}]
If $sp>n$, using fractional Sobolev embedding \cite{DNPV} we know that $u\in C(B_2)$, whereas for $sp\leq n$, we apply \cite[Theorem~2.4]{Coz17}
to conclude that $u\in C(B_2)$. From the equivalence of weak and viscosity solution \cite{KKL19} (see also \cite[Proposition~1.5]{BT25}) it 
then follows that $u$ is a viscosity solution to $(-\Delta_p)^s u=f$ in $B_2$. Now the result follows from Theorem~\ref{Tmain-1}. Also, for the last part, we observe that when $sp>p-2$ we also have
$sp+1>p-1$.
\end{proof}

\bigskip

\subsection*{Acknowledgement}
This research of Anup Biswas was supported in part by a SwarnaJayanti
fellowship SB/SJF/2020-21/03. 

\end{document}